\newif\ifnotes
\newcommand{\Nbr}{\mathrm{Nbr}}
\newcommand{\LNbr}{\mathrm{LNbr}}
\newcommand{\RNbr}{\mathrm{RNbr}}
\newcommand{\UN}{\mathrm{UN}}
\newcommand{\Blue}{\mathrm{Blue}}
\newcommand{\Red}{\mathrm{Red}}
\newcommand{\Cay}{\mathrm{Cay}}
\newcommand{\Inc}{\mathrm{Inc}}
\newcommand{\Skel}{\mathrm{Skel}}
\newcommand{\Usat}{U_{\mathrm{sat}}}
\newcommand{\Uunsat}{U_{\mathrm{unsat}}}
\newcommand{\TruncCay}{\mathrm{TruncCay}}
\begin{document}
\title{Explicit Two-Sided Vertex Expanders Beyond the Spectral Barrier}

\author{Jun-Ting Hsieh\thanks{Carnegie Mellon University. \texttt{juntingh@cs.cmu.edu}. Supported by NSF CAREER Award \#2047933.}
\and Ting-Chun Lin\thanks{UCSD. Hon Hai Research Institute. \texttt{til022@ucsd.edu}. Supported in part by DOE under the cooperative research agreement DE-SC0009919 and by the Simons Collaboration on Ultra-Quantum Matter (652264, JM).}
\and Sidhanth Mohanty\thanks{MIT. \texttt{sidm@mit.edu}. Supported by NSF Award DMS-2022448.}
\and Ryan O'Donnell\thanks{Carnegie Mellon University. \texttt{odonnell@cs.cmu.edu}. Supported in part by a grant from Google Quantum AI.}
\and Rachel Yun Zhang\thanks{MIT.  \texttt{rachelyz@mit.edu}. Supported by NSF Graduate Research Fellowship 2141064. Supported in part by DARPA under Agreement No. HR00112020023 and by an NSF grant CNS-2154149.}}

%% TITLE AND ABSTRACT
\sloppy
\maketitle
\begin{abstract}
We construct the first explicit two-sided vertex expanders that bypass the spectral barrier.

Previously, the strongest known explicit vertex expanders were given by $d$-regular Ramanujan graphs, whose spectral properties imply that every small subset of vertices $S$ has at least $0.5d|S|$ distinct neighbors.
However, it is possible to construct Ramanujan graphs containing a small set $S$ with no more than $0.5d|S|$ neighbors.
In fact, no explicit construction was known to break the $0.5 d$-barrier.

In this work, we give an explicit construction of an infinite family of $d$-regular graphs (for large enough $d$) where every small set expands by a factor of $\approx 0.6d$.
More generally, for large enough $d_1,d_2$, we give an infinite family of $(d_1,d_2)$-biregular graphs where small sets on the left expand by a factor of $\approx 0.6d_1$, and small sets on the right expand by a factor of $\approx 0.6d_2$.
In fact, our construction satisfies an even stronger property: small sets on the left and right have \emph{unique-neighbor expansion} $0.6d_1$ and $0.6d_2$ respectively.

Our construction follows the \emph{tripartite line product} framework of~\cite{HMMP24}, and instantiates it using the face-vertex incidence of the $4$-dimensional Ramanujan clique complex as its base component. As a key part of our analysis, we derive new bounds on the \emph{triangle density} of small sets in the Ramanujan clique complex.
\end{abstract}
\thispagestyle{empty}
\setcounter{page}{0}

\newpage

% TABLE OF CONTENTS
\enlargethispage{1cm}
\tableofcontents
\pagenumbering{roman}
\setcounter{page}{0}

\newpage
\pagenumbering{arabic}

\section{Introduction}

In this work, we study the problem of constructing explicit \emph{vertex expanders}.
Vertex expansion refers to the property that every ``small enough'' set of vertices should have ``many'' distinct neighbors.
Henceforth, we restrict our attention to bipartite graphs.
For $d_L > d_R$, we say that a $(d_L, d_R)$-biregular graph $G$ on $(L,R)$ is a \emph{$\gamma$-one-sided vertex expander} if every subset $S\subseteq L$ of size at most $\eta |L|$ has at least $\gamma d_L |S|$ distinct neighbors in $R$ for some small constant $\eta > 0$.
We say that $G$ is a \emph{$\gamma$-two-sided vertex expander} if, additionally, every small subset $S$ of the right vertices has at least $\gamma d_R |S|$ neighbors.
When $G$ achieves the golden standard of $\gamma \approx 1$, in we say that it is a \emph{lossless vertex expander}.

A key motivation to study vertex expanders is for the construction of good error-correcting codes.
The seminal work of Sipser and Spielman~\cite{SS96} showed that from any one-sided lossless expander, one can construct a good binary error-correcting code with a linear time decoding algorithm.
In the quantum setting, the work of Lin \& Hsieh~\cite{LH22b} showed that \emph{two-sided lossless expanders} with appropriate algebraic structure can be used to construct good quantum low density parity check codes.

The above applications actually go through a weaker property than lossless expansion.
Sufficiently strong vertex expansion implies \emph{unique-neighbor expansion}, the condition that every small set $S$ has many \emph{unique-neighbors}, or vertices with exactly one edge to $S$. 
We say $G$ is a \emph{$\gamma$-one-sided unique-neighbor expander} if every subset $S$ of $L$ of size at most $\eta|L|$ has at least $\gamma d_L |S|$ unique-neighbors in $R$, and that $G$ is a \emph{$\gamma$-two-sided unique neighbor expander} if additionally every subset $S$ of $R$ of size at most $\eta|R|$ has at least $\gamma d_R |S|$ unique-neighbors in $L$.
Indeed, the above works show that any graph with $>\frac12$-one-sided unique-neighbor expansion yields good classical codes, and $>\frac56$-two-sided unique-neighbor expansion (and an algebraic property) yields good quantum LDPC codes. 

In this work, we focus on the setting of two-sided vertex expanders, and specifically the task of constructing such objects. 

\parhead{Where are the vertex expanders?}
There are a plethora of constructions of spectral expanders, so it is natural to wonder whether one can obtain vertex expanders from them.
Kahale~\cite{Kah95} proved that a Ramanujan graph, i.e., a graph with optimal spectral expansion, is a $\frac{1}{2}$-two-sided vertex expander, and demonstrated a near-Ramanujan graph on which this is tight.
Unfortunately, $\frac{1}{2}$-two-sided vertex expansion falls just short of giving any unique-neighbors: in fact Kamber \& Kaufman~\cite{KK22} demonstrated that the algebraic Ramanujan graph construction of Morgenstern~\cite{Mor94} contains sublinear-sized sets with \emph{zero} unique-neighbors.

On the other hand, a random biregular graph is a two-sided lossless expander with high probability (e.g.~\cite[Theorem 4.16]{HLW06}).
However, the work of Kunisky \& Yu~\cite[Section 4.6]{KY24} gives hardness evidence that there is no efficient algorithm to certify that a random graph has unique-neighbor expansion, and in particular, to certify that a random graph has $>\frac{1}{2}$-two-sided vertex expansion, suggesting that there is no simple ``algorithmic handle'' for strong vertex expansion, such as the eigenvalues of some simple matrix.
This motivates studying constructions with more ``structure''.

\subsection{Our results}
In this work, we construct explicit $\frac35$-two-sided vertex expanders, breaking the spectral barrier.
In fact, we prove something stronger: our graphs actually have $\frac35$-two-sided unique-neighbor expansion.
They additionally have an algebraic property relevant for constructing quantum codes.
While the expansion is not quite enough to instantiate the qLDPC codes of~\cite{HL22}, which demand $\frac56$-two-sided unique-neighbor expansion, we believe this is a large step in the right direction.

\begin{mtheorem} \label{thm:main-informal}
    For any $\eps > 0$ and $\beta \in (0, 1]$, there is a large enough $d(\eps,\beta)$ such that for all ${d}_L, {d}_R \ge d(\eps,\beta)$ with $\frac{{d}_L}{{d}_R} \in [\beta, \beta+\eps]$, there is an explicit infinite family of $(5{d}_L, 5{d}_R)$-biregular graphs $(Z_m)_{m\ge 1}$ with $\left( \frac35 - \eps \right)$-two-sided unique-neighbor expansion.
\end{mtheorem}

\begin{remark}
    Our construction $Z$ on vertex set $(L,R)$ can be verified to satisfy the following \emph{algebraic property}, relevant in the context of constructing quantum codes from vertex expanders via~\cite{HL22}:
    \emph{There is a group $\Gamma$ of size $\Omega(|L| + |R|)$ that acts on $L$ and $R$ such that $gv = v$ iff $g$ is the identity element, and $\{ gu, gv \}$ is an edge iff $\{ u, v \}$ is an edge in $Z$.}
\end{remark}

We use the same \emph{tripartite line product} construction as in \cite{HMMP24}, which consists of a large tripartite base graph and a constant-sized gadget graph.
In~\cite{HMMP24}, the base graph was constructed using explicit bipartite Ramanujan graphs, whereas we instantiate it using the face-vertex incidence graphs of the Ramanujan clique complex of~\cite{LSV05,LSV05b}; see also the works of Ballantine~\cite{Bal00}, Li~\cite{Li04}, Cartwright--Sol\'{e}--\.{Z}uk~\cite{CSZ03} and Sarveniazi~\cite{Sar04}.
See \Cref{sec:overview} for an overview of our analysis and the improvement over~\cite{HMMP24}.

In service of proving \Cref{thm:main-informal}, we derive bounds on the triangle density of small sets in the Ramanujan complex of~\cite{LSV05,LSV05b}, which is of independent interest in the study of high-dimensional expanders.
In particular, we employ the $4$-dimensional Ramanujan complex in our construction, and state the triangle bounds for the $4$-dimensional case below.
\begin{lemma}[Triangle density bound in $4$D Ramanujan complex, informal]
    Let $X$ be any $\bbF_q$-Ramanujan complex on $n$ vertices, and let $U\subseteq X(0)$ be a subset of vertices of size at most $\delta n$ where $q^{-25/4} > \delta > 0$.
    The number of size-$5$ faces with $3$ or more vertices in $U$ is at most $O(q^{13/2}\cdot |U|)$.
\end{lemma}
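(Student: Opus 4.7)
The plan is to bound the number of size-5 faces of $X$ with $\geq 3$ vertices in $U$ by first bounding the number of triangles (size-3 faces) contained in $U$, and then using a uniform upper bound on how many size-5 faces contain a fixed triangle. The 4D LSV Ramanujan complex carries a natural $\mathbb{Z}/5\mathbb{Z}$-coloring of its vertices such that every face consists of vertices of distinct colors. I decompose $U = \bigsqcup_{i=0}^{4} U_i$ by color and cascade the bipartite expander mixing lemma (EML) up through two dimensional levels of the complex.

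For the first level, I bound edges inside $U$. For each pair of colors $\{i,j\}$, the bipartite graph between color-$i$ and color-$j$ vertices of $X$ is near-Ramanujan biregular, with degree $\Theta(q^s)$ and second singular value $O(q^{s/2})$, where $s \in \{4,6\}$ depending on whether the colors are adjacent in the affine Dynkin diagram $\tilde{A}_4$. Applying EML to $(U_i, U_j)$ and summing over pairs bounds the total number of edges inside $U$. For the second level, for each color triple $\{i,j,k\}$, I consider the bipartite ``triangle-incidence'' graph whose left vertex set is the complex-edges of color pair $\{i,j\}$, whose right vertex set is the color-$k$ vertices of $X$, and whose edges encode triangle-containment; this graph inherits near-Ramanujan spectral properties from the Ramanujan complex. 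Applying EML to the $U$-edges (bounded at level one) against $U_k$ and summing over triples bounds the number of triangles in $U$, stratified by color type. The hypothesis $\delta \leq q^{-25/4}$ is exactly what is needed so that the spectral term of EML dominates the density term at both levels of the cascade.

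Finally, each triangle of $X$ is contained in $\Theta(q^3)$ or $\Theta(q^2)$ size-5 faces, depending on its color type, since the link of a triangle is a 1-dimensional spherical building of type $A_2$ (the incidence graph of $\mathbb{P}^2(\mathbb{F}_q)$) or $A_1 \times A_1$ (a complete bipartite graph on $O(q)$ vertices). Each size-5 face with $k\geq 3$ vertices in $U$ contributes $\binom{k}{3} \leq 10$ triangles in $U$, so combining the type-dependent triangle counts from the cascade with the type-dependent multiplicities yields the target bound $O(q^{13/2}|U|)$.

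The main obstacle will be establishing the spectral gap of the triangle-incidence bipartite graph in the second level of the cascade. This does not follow from the 1-skeleton Ramanujan property alone; it must be extracted from the full Hecke-operator spectrum of $X$ via, e.g., Oppenheim's trickle-down theorem or the automorphic-spectrum analysis of~\cite{LSV05,LSV05b,Li04}. A secondary subtlety is the case analysis across color types: the bipartite sub-complexes have different degrees depending on positions in $\tilde{A}_4$, so the triangle-counts and the 5-face multiplicities must be carefully matched by type to yield the clean exponent $13/2$ rather than something larger.
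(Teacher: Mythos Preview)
Your high-level plan---bound triangles in $U$ color-triple by color-triple, then multiply by the number of size-$5$ faces extending each triangle---matches the paper's, and your face-extension multiplicities ($\Theta(q^3)$ and $\Theta(q^2)$ for the two color-type classes) are correct. The gap is in the triangle-counting step.

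Your cascaded EML loses a full factor of $q$. Take the color triple $(0,1,4)$. Level~1 gives $|E_{01}^U|\lesssim q^2|U|$. At level~2, the triangle-incidence bipartite graph between color-$\{0,1\}$ edges and $V_4$ is $(q^3,q^7)$-biregular, so by Alon--Boppana its second singular value is $\gtrsim q^{7/2}$; even granting equality, EML gives
\[
\triangle_{0,1,4}\ \lesssim\ q^{7/2}\sqrt{|E_{01}^U|\,|U_4|}\ \lesssim\ q^{7/2}\cdot q\,|U|\ =\ q^{9/2}|U|,
\]
and after the $q^3$ face-extension you get $q^{15/2}|U|$, not $q^{13/2}|U|$. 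The other two edge-vertex orderings for this triple give the same or worse, and the $(0,2,3)$ triple similarly bottoms out at $q^{7}|U|$. So the ``main obstacle'' you flag (the spectral gap of the triangle-incidence graph) is not the issue: even the best-possible gap is not good enough.

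What the paper does instead is localize to links. Fix $u\in U_0$ and count triangles through $u$ as edges between $U_1(u)$ and $U_4(u)$ inside the spherical building $\mathbb{P}(\mathbb{F}_q^5)$. The link bipartite graph between $1$- and $4$-dimensional subspaces has second eigenvalue $\approx q^{(j-i)(k-(j-i)-1)/2}$, a $\sqrt{q^{\,j-i}}$ savings over the global $1$-skeleton eigenvalue. To exploit this, the paper does \emph{not} just sum over $u$: it first stratifies $U_0$ dyadically into buckets $U_{0,\alpha,\beta}$ according to $|U_1(u)|\in[2^{\alpha-1},2^\alpha)$ and $|U_4(u)|\in[2^{\beta-1},2^\beta)$, bounds each bucket size by a second application of global EML (this is where $\delta<q^{-25/4}$ is used), and then sums the link-EML bound over buckets. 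The bucket-size bound and the link-EML bound trade off against each other, and the dyadic sum picks out the balance point; this is what produces the exponent $7/2$ for triangles of this type and hence $13/2$ overall. Your level-1 step throws away the degree profile of $U_0$ into $U_1,U_4$ and keeps only the total edge count, which is exactly the information the bucketing preserves and exploits.
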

We refer the reader to \Cref{lem:triangle-bound-m} for the more general setting, and the proof is given in \Cref{sec:triangles}.

\subsection{Related work}

Unique-neighbor expanders were first constructed by Alon \& Capalbo~\cite{AC02}, who gave several constructions, one of which involves taking a \emph{line product} of a large Ramanujan graph with the $8$-vertex, $3$-regular graph obtained by the union of the octagon and edges connecting diametrically opposite vertices.

Another construction given in this work was a one-sided unique-neighbor expander of aspect ratio $22/21$, which was extended by recent work of Asherov \& Dinur~\cite{AD23} to obtain one-sided unique-neighbor expansion for arbitrary aspect ratio. These constructions were obtained via taking the \emph{routed product} of a large biregular Ramanujan graph and a constant size random graph.
These constructions were simplified by follow-up work of Kopparty, Ron Zewi \& Saraf~\cite{KoppartyRZS24}.

The work of Capalbo, Reingold, Vadhan \& Wigderson~\cite{CRVW02} constructed one-sided lossless expanders with large degree and arbitrary aspect ratio via a generalization of the \emph{zig-zag product}~\cite{RVW00}. More recently, Golowich~\cite{Gol23} and Cohen, Roth \& Ta-Shma~\cite{CohenRTS23} gave a much simpler construction and analysis of one-sided lossless expanders based on the routed product. These routed product constructions fundamentally fall short of achieving two-sided expansion for linear size sets.

In the way of explicit constructions of two-sided vertex expanders, the work of Hsieh, McKenzie, Mohanty \& Paredes~\cite{HMMP24} constructs explicit $\gamma$-two-sided unique neighbor expanders for an extremely small positive constant $\gamma$.
Their construction also guaranteed two-sided lossless expansion for sets of size at most $\exp(O(\sqrt{\log n}))$.
This was improved to two-sided lossless expansion for polynomial sized sets by~\cite{Chen2024}.

A parallel line of works~\cite{TaShmaUZ01,GUV09} construct one-sided lossless expanders in the case where the left side is polynomially larger than the right.
In the same unbalanced setting, the recent work of~\cite{CGRZ2024} show that the one-sided lossless expanders of~\cite{KalevT22} that are based on multiplicity codes~\cite{KoppartySY14} are in fact two-sided lossless expanders.
The setting of polynomial imbalance is of interest in the literature on randomness extractors, but are not known to give good quantum LDPC codes via~\cite{LH22b}.

\subsection{Technical overview}
\label{sec:overview}

In this section, we give a brief overview; see \Cref{sec:proof-overview} for a more detailed overview of the analysis, once more notation and context has been set up.

Our construction follows the \emph{tripartite line product} framework of~\cite{HMMP24}.
The first ingredient is a large tripartite base graph $G$ on vertex set $L \cup M \cup R$ (denoting left, middle, and right vertex sets), where we place a $(k, D_L)$-biregular graph $G_L$ between $L$ and $M$, and a $(D_R, k)$-biregular graph $G_R$ between $M$ and $R$.
The second ingredient is a constant-sized gadget graph $H$, which is a $(d_L, d_R)$-biregular graph on vertex set $[D_1] \cup [D_2]$.
The tripartite line product between $G$ and $H$ is the $(k d_L, kd_R)$-biregular graph $Z$ on $L$ and $R$ obtained as follows: for each vertex in $M$, place a copy of $H$ between the $D_L$ left neighbors of $v$ and the $D_R$ right neighbors of $v$ (see \Cref{def:construction}).

Since $H$ has constant size, we can find an $H$ that satisfies strong expansion properties by brute force.
It is thus convenient to view $H$ as a random biregular graph.
The bipartite graphs $G_L$ and $G_R$ of the base graph are chosen to be appropriate bipartite expanders.
In \cite{HMMP24}, they are chosen to be explicit near-Ramanujan bipartite graphs, while in our case we set them to be the vertex-face incidence graphs from high-dimensional expanders (\Cref{sec:base-graph}), which give us additional structure.
Specifically, we choose them to be vertex-face incidence graphs of the $4$D Ramanujan complex of \cite{LSV05,LSV05b}, and in particular, $G_L$ and $G_R$ are $(5,D_L)$-biregular and $(D_R, 5)$-biregular respectively.
See \Cref{sec:construction} for specific properties of the base graph that we need.

For a set $S \subseteq L$, we would like to lower bound the number of its unique-neighbors (in $R$) in the final graph $Z$.
The analysis starts by considering $U = N_{G_L}(S) \subseteq M$.
Due to the ``randomness'' of the gadget $H$ (\Cref{def:pr-gadget}), we expect that within the gadget for each $u\in U$, almost all right-neighbors are unique-neighbors, i.e., $\approx d_L \cdot \deg_S(u)$ unique-neighbors, where $\deg_S(u) = |S \cap N_{G_L}(u)|$, as long as $|S\cap N_{G_L}(u)|$ is sufficiently small.

Therefore, we need to show that a large fraction of $u \in U$ has $\deg_S(u)$ below some threshold.
We call this the \emph{left-to-middle} analysis.
Akin to \cite{HMMP24}, we split $U$ into $U_{\low}$ (low-degree) and $U_{\high}$ (high-degree) and argue that $e(S, U_{\high})$ is small.
In our case, we show that $G_L$ satisfies \emph{triangle expansion} (\Cref{def:triangle-expansion}): the property that for any small $U' \subseteq M$, there are very few vertices in $L$ with $3$ or more edges to $U'$.
Applying this to $U_{\high}$ shows that most vertices in $S$ have at most $2$ edges to $U_{\high}$ and at least $k-2$ edges to $U_{\low}$. Then, barring collisions in $R$ arising from different $u$, we have that most vertices in $S$ have $\approx (k-2) d_L$ unique neighbors within gadgets they are part of.

Next, we need to argue that the unique-neighbors in gadgets corresponding to different $u\in U$ do not have too many collisions in $G_R$.
We call this the \emph{middle-to-right} analysis.
For example, suppose a vertex $r \in R$ is a unique-neighbor within gadget $H_u$, if $r$ is also a neighbor within $H_{u'}$ for some other $u' \in U$, then it is \emph{not} a unique-neighbor of $S$ in the final graph $Z$.
If there is a collision between $u, u' \in U$, there must be a path between $u \to r \to u'$ in $G_R$.

To bound the number of collisions, we construct a multigraph $C$ on $U$ placing an edge for each length-$2$ path $u\to r\to u'$ arising from a collision.
Based on spectral properties of the Ramanujan complex (specifically \emph{skeleton expansion}; see \Cref{def:skeleton-expansion,lem:bipartite-spectral-bound}), we can bound the number of edges inside $\ul{C}$: the \emph{simple} version of $C$ obtained by replacing every multiedge by a single edge.
To control the number of edges in $C$, we need to show that ``not too many'' edges occur with ``abnormally high'' multiplicity.

To prove the statement about multiplicities, we exploit the fact that for a pair of vertices $u,u'\in M$, the set of its common neighbors is highly constrained in the graph arising from the Ramanujan complex (\Cref{def:structured-bipartite-graph}), and crucially, the property that neighborhoods of small sets in $H$ are ``spread out'': i.e. for a vertex $u\in M$, and any small subset of its left neighbors $S_u$, the neighborhood of $S_u$ in the gadget graph $H$ does not place ``too many'' vertices on the neighbors of any fixed $u'$; this is articulated in \Cref{def:pr-gadget}.

\parhead{Why does using HDX do better than \cite{HMMP24}?}
\cite{HMMP24} use near-Ramanujan bipartite graphs as the base graph.
In their middle-to-right analysis, via sharp density bounds of small subgraphs in bipartite spectral expanders, they bound the number of collisions each vertex $u\in M$ partakes in by $\sqrt{D_R}$.
As a result, they require the gadget degrees $d_L, d_R$ to be larger than $\sqrt{D_L}, \sqrt{D_R}$ by at least some constant factor, which hurts unique-neighbor expansion within gadgets corresponding to $u\in M$ such that $\deg_S(u) \approx \sqrt{D_L}$.

One of the key properties in the Ramanujan complex we use is: the square of the graph $G_R$ restricted to $M$ looks like $\ell$-copies of an almost-Ramanujan graph of degree $D_R/\ell$, for some ``reasonably large'' $\ell$.
As an upshot, the number of \emph{other vertices} $u'\in M$ that $u$ has a collision with is at most $\sqrt{D_R/\ell}$.
We can use this to show that as long as we choose our gadget degree $d_R \gg \sqrt{D_R/\ell}$, we can prove that for a typical vertex $u\in M$, only a small fraction of the unique-neighbors within its gadget encounters a collision.
The win over the approach of \cite{HMMP24} comes from the ability to choose $d_R$ such that $\sqrt{D_R/\ell} \ll d_R \ll \sqrt{D_R}$.
This improves the range of values of $\deg_S(u)$ for which the gadget corresponding to vertex $u$ experiences lossless expansion.

\parhead{Going beyond $\frac{3}{5}$.}
For our construction, we choose $k=5$ which gets us an expansion factor of $\frac{k-2}{k} = \frac{3}{5}$ in \Cref{thm:main-informal}. 
One might ask if we could get a better expansion factor by choosing a larger $k$. Unfortunately, our analysis requires us to balance certain parameters, and we were not able to show that larger $k$ satisfies the necessary inequalities.
See \Cref{rem:parameter-requirements} for a discussion on parameter choice.

We mention a few candidate ways to extend our analysis beyond $\frac35$. First, if one could improve the bounds on triangle density in small sets, that may allow larger $k$ to satisfy the necessary inequality, thus improving the bound to $\frac{k-2}{k}$. One could also attempt to bound \emph{tetrahedron expansion} (or even larger faces) in place of triangle expansion, in hopes of satisfying the necessary inequalities for appropriately larger $k$. However, in both approaches, a key difficulty seems to be that there are very few bounds known for the incidences between planes of various dimensions, and those known are extremely weak for the size of sets that naturally arise in the links of the Grassmanian clique complex.

A concrete question in this direction is whether we can obtain  tight subgraph density bounds for the bipartite incidence graph between dimension $i$ and $j$ subspaces of $\bbP(\bbF^k)$, for sets of size $q^{i(d-i)/2}$ and $q^{j(d-j)/2}$.
As we will see in \Cref{sec:triangles}, we need such tight bounds as these graphs appear as ``average'' links in the Ramanujan complex.
\subsection{Notation}
\label{sec:notation}
We now establish some notational conventions we follow throughout the paper.

For an $n$-vertex graph $G$, we use $A_G$ to denote the adjacency matrix of $G$, and write its eigenvalues in descending order $\lambda_1(G)\ge \dots \ge \lambda_n(G)$.
We say an \emph{eigenvalue of $G$} to mean \emph{eigenvalue of the adjacency matrix of $G$}.
When $G$ is bipartite on left vertex set $A$, right vertex set $B$, and edge set $E$, we write it as $(A,B,E)$.
We use $G^{\top}$ to denote the bipartite graph $(B,A,E)$. Sometimes, when the edge set is clear, we will simply denote $G$ by $(A, B)$.

We mildly deviate from standard notation for convenience, and define $[k] \coloneqq \{ 0, 1, \dots, k-1 \}$.
\section{Explicit construction of \texorpdfstring{$3/5$}{3/5}-two-sided unique-neighbor expanders}
\label{sec:main-proof}

In this section, we prove our main theorem.
\begin{theorem}[Formal \Cref{thm:main-informal}]  \label{thm:main-formal}
    For any $\beta\in(0,1]$ and $\eps > 0$, there exists $d_0 \in \N$ such that for any $d_L,d_R\ge d_0$, there is an infinite family $(Z_m)_{m\ge 1}$ of $(5d_L, 5d_R)$-biregular bipartite graphs on $(L, R)$ such that $d_R/d_L \in [\beta, \beta+\eps]$ for which $Z_m$ is a $(3/5 - \eps)$-unique-neighbor expander.
    Further, there is an algorithm that takes in $n$ as input, and in $\poly(n)$-time constructs some $Z_m$ from this family for which $|V(Z_m)| = \Theta(n)$.
\end{theorem}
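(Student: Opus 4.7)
The plan is to instantiate the tripartite line product of \cite{HMMP24} (see \Cref{def:construction}) with a base graph built from the $4$-dimensional $\bbF_q$-Ramanujan clique complex of \cite{LSV05,LSV05b} and a constant-size gadget found by brute force. Concretely, I would take $G_L$ and $G_R$ to be the $(5, D_L)$- and $(D_R, 5)$-biregular face--vertex incidence graphs of the complex, so the $k = 5$ left-neighbors of each $u \in M$ form the vertex set of a top-dimensional face; then choose a bipartite $(d_L, d_R)$-gadget $H$ on $[D_L] \cup [D_R]$ satisfying the pseudorandomness properties of \Cref{def:pr-gadget}. Varying $q$ and the choice of how vertices are labelled as ``left'' versus ``right'' gives enough flexibility to tune $D_R/D_L$, and hence the ratio $d_R/d_L$, into any target window $[\beta, \beta+\eps]$. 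The explicit polynomial-time constructibility then follows from the explicit Ramanujan complex construction of \cite{LSV05b} combined with a constant-time brute-force search for $H$.

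Given a small $S \subseteq L$, set $U = \Nbr_{G_L}(S) \subseteq M$ and split $U = \Usat \cup \Uunsat$ at a degree threshold $\tau$ chosen so that the pseudorandomness of $H$ guarantees roughly $(1 - O(\eps))\, d_L\, \deg_S(u)$ unique-neighbors of $S \cap \Nbr_{G_L}(u)$ inside the gadget $H_u$ whenever $u \in \Usat$. For the left-to-middle step I would apply the triangle-density bound (\Cref{lem:triangle-bound-m}) to $\Uunsat$: any $v \in S$ with three or more $G_L$-neighbors in $\Uunsat$ would force a triangle among three top-faces through $v$ all saturated by $S$, so the count of such $v$ is at most $O(\eps)|S|$. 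Hence at least $(1-O(\eps))|S|$ vertices of $S$ have $\ge k - 2 = 3$ of their $k = 5$ middle-neighbors in $\Usat$, and each contributes $\approx 3 d_L$ candidate unique-neighbors through the associated gadgets, yielding $\gtrsim (3/5) \cdot 5 d_L \cdot |S|$ candidates before any collisions in $R$ are accounted for.

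The middle-to-right step is to argue that collisions destroy only an $O(\eps)$-fraction of these candidates. I would form a multigraph $C$ on $U$ with an edge $\{u, u'\}$ for every pair whose gadgets $H_u, H_{u'}$ both send an edge to a common $r \in R$; a candidate $r \in H_u$ fails to be a true unique-neighbor in $Z$ exactly when $\{u, u'\}$ is an edge of $C$ witnessed by $r$. Using the skeleton-expansion property of $G_R$ (\Cref{def:skeleton-expansion}), the $M$-to-$M$ collision graph decomposes as $\ell$ copies of a near-Ramanujan graph of degree $\approx D_R/\ell$; a bipartite expander-mixing argument (\Cref{lem:bipartite-spectral-bound}) then bounds $|E(\underline{C})| = O\!\bigl(|U| \sqrt{D_R/\ell}\bigr)$ on the simple version $\underline{C}$. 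The multiplicity of each edge $\{u, u'\}$ in $C$ is controlled by combining the structured description of the common $G_R$-neighborhood of $u, u'$ (\Cref{def:structured-bipartite-graph}) with the spread-out property of $H$ from \Cref{def:pr-gadget}. In the regime $\sqrt{D_R/\ell} \ll d_R \ll \sqrt{D_R}$ (which is the precise reason HDX beats the base-plus-spectral approach of \cite{HMMP24}), summing multiplicities over $\underline{C}$ shows the total collision count is $o(k\, d_R |S|)$.

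Combining the two steps yields $(3/5 - O(\eps))$-one-sided unique-neighbor expansion for $S \subseteq L$, and the symmetric argument with $L \leftrightarrow R$, $d_L \leftrightarrow d_R$, and $G_L \leftrightarrow G_R$ gives the right-side bound. A final rescaling of $\eps$ delivers the stated $(3/5 - \eps)$-two-sided unique-neighbor expansion. The main obstacle I anticipate is the triangle-density bound on $\Uunsat$: to push it through at the critical scale $|U| \lesssim q^{-25/4} n$ one must descend into the link structure of the Grassmannian clique complex and apply tight subgraph-density estimates for bipartite spectral expanders at their sharp threshold, and the delicate balance between $k = 5$, the exponent $13/2$ in the triangle count, and the gadget degrees $d_L, d_R$ is precisely what pins the final constant at $3/5$ rather than anything larger.
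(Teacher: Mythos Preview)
Your proposal is correct and follows essentially the same approach as the paper: instantiate \Cref{thm:main-UNE} with the $k=5$ Ramanujan-complex base graph of \Cref{lem:base-graph} (giving $D_L,D_R=\Theta(q^{10})$, $\tau=O(q^{6.5})$, $\lambda=O(q^3)$, $s\in[q^4,O(q^6)]$) and a brute-forced gadget from \Cref{lem:pseudorandom-gadget-exists} with $d_L,d_R=\Theta(q^{3.25})$, then check the parameter inequalities of \Cref{thm:main-UNE}. Two terminological slips to fix: (i) the incidence direction is reversed---each $v\in L$ is a top-dimensional face whose $k=5$ neighbors in $M$ are its vertices (not the other way around), so the triangle-density bound concerns three \emph{vertices} of $M$ lying in a common face, not ``three top-faces through $v$''; (ii) your $\Usat/\Uunsat$ are what the paper calls $U_{\low}/U_{\high}$, whereas the paper reserves $\Usat/\Uunsat$ for a finer subdivision of $U_{\low}$ used only in the middle-to-right collision analysis.
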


In \Cref{sec:construction}, we describe our construction and state a more general result (\Cref{thm:main-UNE}), which directly implies \Cref{thm:main-formal}.
In \Cref{sec:proof-overview}, we give a proof overview for \Cref{thm:main-UNE}, and we formally prove \Cref{thm:main-UNE} in \Cref{sec:UNE-proof}.

\subsection{Construction}
\label{sec:construction}

The construction is based on the ``tripartite line product'' in \cite[Definition 7.6]{HMMP24}, instantiated with more structured base and gadget graphs.
\begin{definition}[Construction]    \label{def:construction}
    The ingredients for our construction are:
    \begin{itemize}
        \item Two bipartite graphs $G_L = (L, M, E_L)$ and $G_R = (R, M, E_R)$, where $G_{L}$ is $k$-regular on $L$ and $D_L$-regular on $M$, and the graph $G_{R}$ is $k$-regular on $R$, and $D_R$-regular on $M$.
        For each vertex $v$ in $M$, we order its neighbors in $L$ and $R$ according to injective functions $\LNbr_{v}:[D_L]\to L$ and $\RNbr_{v}:[D_R]\to R$ respectively. 
        
        % If $G_L$ and $G_R$ support a free group action, then the order $\LNbr_v$ and $\RNbr_v$ can be chosen to be consistent with the free group action. \rachel{hmm need to explain better}
        \item A constant-sized \emph{gadget graph} $H$, which is a bipartite graph with left vertex set $[D_L]$, and right vertex set $[D_R]$.
        The graph $H$ is $d_L$-regular on the left, and $d_R$-regular on the right.
    \end{itemize}
    The final construction, which we call $Z$, is a bipartite graph on $(L, R)$ constructed by taking each vertex $u\in M$, and placing a copy of $H$ between the left and right neighbors of $u$, which we will denote as $H_u$.
    More concretely, for every $u\in M$, $i\in [D_L]$, and $j\in [D_R]$, we place an edge between $\LNbr_{u}(i)$ and $\RNbr_u(j)$ if there is an edge between $i$ and $j$ in $H$.
    We emphasize that $M$ is only used in the construction of $Z$ and does not appear in the final graph.
    % \ryan{I remember being sufficiently confused by the fine details of this that we should take care, in the end, to define this carefully.}\ryan{Another question: does the finally constructed bipartite graph have parallel edges (possibly)?  I mean, it doesn't matter much, but probably it's safest if we say ``It might have'', right?}
    % \sid{How do you feel about the above definition?} \rachel{might be reasonable to emphasize that $M$ is not part of the final graph. Something along the lines of ``We emphasize that $M$ is used only in the construction of $Z$ and does not appear in the final graph.''}
\end{definition}

\parhead{Choice of base graph.}
We choose $G_{L}$ and $G_{R}$ as (truncated) bipartite vertex-face incidence graphs of the Ramanujan complex.
We state the relevant properties of the base graph we use below, and defer the proof that such a base graph indeed exists to \Cref{sec:base-graph}.
\begin{definition}[Structured bipartite graph]  \label{def:structured-bipartite-graph}
    A $(k,D)$-biregular \emph{structured bipartite graph} $G=(V, M, E)$ is a bipartite graph between $V$ and $M$ where:
    \begin{enumerate}
        \item Every vertex in $V$ is degree-$k$ and every vertex in $M$ is degree-$D$.
        \item For each vertex $u\in M$, there is an ordering of its neighbors specified by an injective function $\Nbr_u : [D] \to V$.
        \item \label{property:M-partition} The set $M$ can be expressed as a disjoint union $\sqcup_{a\in[k]}M_a$ such that each $v\in V$ has exactly one neighbor in each $M_a$.
        \item \label{property:special-sets} For each pair of distinct $a,b\in[k]$, there exists $s_G(a,b)$ (abbreviated to $s$) such that there are $s$ \emph{special sets} $(A_i\subseteq[D])_{i\in[s]}$ of size between $\frac{D}{2s}$ and $\frac{2D}{s}$, such that for any $u\in M_a$ and $v\in M_b$, we have $N(u)\cap N(v)$ is either empty, or is equal to $\Nbr_u(A_i)$ for some $i\in [s]$.
    \end{enumerate}
\end{definition}

As a prelude to \Cref{sec:base-graph}, we will construct structured bipartite graphs $(V,M,E)$ to be the incidence graph between vertices and $(k-1)$-faces in the Ramanujan clique complex of \cite{LSV05,LSV05b}, where we set $V$ to be the set of $(k-1)$-faces and $M$ to be the vertex set.
Then, the properties listed in \Cref{def:structured-bipartite-graph} will be satisfied naturally; see \Cref{thm:LSV}.

We now describe some quantities associated to a $(k, D)$-biregular structured bipartite graph $G=(V,M,E)$ that are of interest in the analysis.
\begin{definition}[Small-set triangle expansion] \label{def:triangle-expansion}
    We say that $G$ is a \emph{$\tau$-small-set triangle expander} if for some small constant $\eta > 0$, depending on $k$ and $D$, and every $U\subseteq M$ of size at most $\eta |M|$, the number of vertices $v\in V$ with 3 or more neighbors into $U$ is at most $\tau\cdot |U|$.
\end{definition}

\begin{definition}[Small-set skeleton expansion] \label{def:skeleton-expansion}
    Let $\wt{G}$ be the simple graph obtained by placing an edge for every $u,v\in M$ such that there is at least one length-$2$ walk $(u,a,v)$ in $G$ for $a\in V$.
    We say that $G$ is a \emph{$\lambda$-small-set skeleton expander} if for some small constant $\eta > 0$, and every set $U\subseteq M$ of size at most $\eta|M|$, the largest eigenvalue of $\wt{G}[U]$ is at most $\lambda$.
\end{definition}

% \begin{definition}[Free group action]
%     For a group $\Gamma$, we say that $G$ has a \emph{free group action} if there is a group $\Gamma$ such that 
    
%     \rachel{is this the right place?}
% \end{definition}

In the following, we define notation for our construction.

\begin{notation}[$G_L, G_R, D_L, D_R, k, \tau, \lambda, s_L, s_R$]   \label{notation:base-graph-notation}
    We choose our tripartite base graph on $(L,M,R)$ with the following two structured bipartite graphs: $G_L = (L, M, E_L)$, which is $(k,D_L)$-biregular, and $G_R = (R, M, E_R)$, which is $(k,D_R)$-biregular.
    Let $\tau$ and $\lambda$ be constants such that both $G_L$ and $G_R$ are $\tau$-small set triangle expanders, and $\lambda$-small-set skeleton expanders.
    We use $s_L(a, b)$ and $s_R(a,b)$ to refer to $s_{G_L}(a,b)$ and $s_{G_R}(a,b)$ respectively.
\end{notation}

In \Cref{sec:base-graph}, we prove the following about the existence of base graphs $G_L$ and $G_R$.
Concretely, the below statement follows from \Cref{lem:basically-arbitrary-degree}.
\begin{lemma}   \label{lem:base-graph}
    Given integers $n_0$, $k$, prime power $q$, integers $D_L$ and $D_R$ that are multiples of $k!$ and have magnitude at most $c\cdot q^{\binom{k}{2}}$ for some small constant $c > 0$ as input, there is a $\poly(n_0)$-time algorithm that constructs $(L,M,R)$ where $|M| = n = \Theta(n_0)$, and $|R| = |L| \cdot D_L / D_R$, and outputs structured bipartite graphs $G_L$ on $(L,M)$ and $G_R$ on $(M,R)$ such that:
    \begin{align*}
        % \tau &= O_k(1) \cdot \max_{0 \le i_0 < i_1 < i_2 < k} \min_{(i, j) \in \left\{ \substack{
        %     (i_1 - i_0, i_2 - i_0), \\
        %     (i_2 - i_1, k + i_0 - i_1), \\
        %     (k + i_0 - i_2, k + i_1 - i_2) 
        % } \right\} } \sqrt{q^{(j-i)(k-(j-i)-1)}} \cdot q^{i(k-i)/4} \cdot q^{j(k-j)/4} \cdot q^{\binom{i}{2} + \binom{j-i}{2} + \binom{k-j}{2}} \\
        & D_L,\ D_R = \Theta_k(1) \cdot q^{\binom{k}{2}} \mcom \\
        &\tau = O_k(1) \cdot q^{\binom{k}{2} - \frac{k^2}{8} - \frac{1}{2}} \cdot \max_{0 \le i_0 < i_1 < i_2 < k} \min_{(i, j) \in \left\{ \substack{
            (i_1 - i_0, i_2 - i_0), \\
            (i_2 - i_1, k + i_0 - i_1), \\
            (k + i_0 - i_2, k + i_1 - i_2) 
        } \right\} } q^{\frac{1}{8}((i-j+2)^2 + (k-i-j)^2)} \mcom \\
        &\lambda = O_k(1) \cdot q^{\frac{1}{2}\floor{\frac{k^2}{4}}} \mcom \\
        & s_L(a,b),\ s_R(a,b) \in \bracks*{q^{k-1}, O(q^{\floor{k^2/4}})} \quad \forall a < b\in [k] \mper
    \end{align*}
    % Additionally, there is a group $\Gamma$ of order $|\Gamma| = |M|$ such that $G_L$ and $G_R$ both have a free $\Gamma$-action, such that the action of $\Gamma$ on $M$ is the same in both $G_L$ and $G_R$.
    % Furthermore, for any $g\in\Gamma$, $v\in M$, and $i\in[D_L]$, we have $g\LNbr_v(i) = \LNbr_{gv}(i)$.
    % Analogously, for $i\in [D_R]$, we have $g\RNbr_v(i) = \RNbr_{gv}(i)$.
\end{lemma}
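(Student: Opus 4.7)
}
The plan is to instantiate the $k$-dimensional Ramanujan clique complex of \cite{LSV05,LSV05b} over $\bbF_q$, and read off the structured bipartite graphs $G_L$ and $G_R$ as face--vertex incidence graphs. Specifically, let $X$ be the $k$-dimensional LSV complex, whose vertex set $X(0)$ will play the role of the middle set $M$ (of size $n = \Theta(n_0)$), and whose set $X(k-1)$ of $(k-1)$-dimensional faces will play the role of $L$ (resp.\ $R$). The bipartite incidence graph between $X(k-1)$ and $X(0)$ is $k$-regular on $L$ (every $(k-1)$-face has $k$ vertices) and $\Theta_k(1)\cdot q^{\binom{k}{2}}$-regular on $M$, since the number of top-dimensional faces through a fixed vertex is the number of maximal flags in the spherical Grassmannian link, which is this well-known $q$-polynomial count. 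Cite the explicit polynomial-time construction of \cite{LSV05,LSV05b} to give the required algorithmic complexity.

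Next I would verify the four properties of \Cref{def:structured-bipartite-graph}. The partition $M = \sqcup_{a\in[k]}M_a$ in property (\ref{property:M-partition}) comes from the canonical type/coloring function $\chi : X(0) \to [k]$ on the LSV complex, under which every top-dimensional face contains exactly one vertex of each color. For property (\ref{property:special-sets}), I would analyze the common neighborhood $N(u)\cap N(v)$ for a pair of vertices $u \in M_a, v \in M_b$: this set corresponds to the $(k-2)$-faces of $X$ containing the edge $\{u,v\}$, which by the building structure of the link of $\{u,v\}$ is either empty or is in bijection with the set of complete flags in a Grassmannian of two complementary spaces of dimensions $b-a$ and $k-b+a$ (over $\bbF_q$). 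In particular, this depends only on the type pair $(a,b)$ up to the choice of $u$, which is exactly what the special-set structure demands, and the number of special sets $s_L(a,b)$ is a count of flags in a Grassmannian of type $(a,b)$. Verifying the size bound $s_L(a,b) \in [q^{k-1}, O(q^{\lfloor k^2/4\rfloor})]$ is a direct $q$-binomial estimate.

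For the triangle-expansion bound $\tau$, I would invoke the triangle density bound proved in \Cref{sec:triangles} (the general version of the 4D lemma in the introduction): the number of $(k-1)$-faces with three or more vertices inside a small set $U \subseteq M$ is controlled by decomposing over the three color-types $(i_0, i_1, i_2)$ of the distinguished triangle inside the face, and bounding incidences via the corresponding link, which yields a product of two Grassmannian bipartite incidence bounds. Taking the worst-case triple and the best choice of two edges of the distinguished triangle to bound yields exactly the $\max\min$ expression in the statement, multiplied by $q^{\binom{k}{2} - k^2/8 - 1/2}$ from the ambient face count. For the skeleton expansion $\lambda$, I would use the spectral analysis of links of the Ramanujan complex (\Cref{lem:bipartite-spectral-bound}): the skeleton graph on $M$ decomposes over color-pairs, and for each pair the nontrivial spectrum of the bipartite incidence between two color classes is $O_k(q^{\lfloor k^2/4\rfloor/2})$ by Ramanujan bounds on the Grassmannian posets, giving the claimed bound on the restricted spectral norm.

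Finally, to hit \emph{arbitrary} target degrees $D_L,D_R$ that are multiples of $k!$ and at most $c\cdot q^{\binom{k}{2}}$, I would apply a truncation construction analogous to \Cref{notation:base-graph-notation}'s \textsf{TruncCay}, which replaces each vertex $u \in M$ with a symmetric subset of $D_L$ (resp.\ $D_R$) of its $(k-1)$-face-neighbors. The $k!$-divisibility is used to preserve color-balance across the truncation, and the factor $c$ ensures we retain enough of the complex to keep triangle and skeleton expansion bounds intact up to constants. The main obstacle I anticipate is showing that the truncation preserves property (\ref{property:special-sets}) of \Cref{def:structured-bipartite-graph} with the stated $|A_i|$ range $[D/(2s), 2D/s]$; since the special sets correspond to flags in Grassmannian subvarieties of different sizes, careful truncation (e.g., respecting the group action of $\Gamma$ on the complex) is needed so that each special set loses roughly the same fraction of elements. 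Once this is in place, all four parameter bounds follow directly from the Ramanujan complex estimates above.
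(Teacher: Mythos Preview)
Your plan is essentially the paper's own approach: build $G_L,G_R$ as face--vertex incidence graphs of the LSV complex, read off the partition and special sets from the type function and link structure, pull $\tau$ from \Cref{sec:triangles} and $\lambda$ from \Cref{lem:bipartite-spectral-bound}, and then truncate to hit the target degrees.

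The one place where your plan is underspecified enough to be a real gap is the truncation step. You describe it as ``replace each vertex $u\in M$ with a symmetric subset of $D_L$ of its face-neighbors'', but this is not how it can be done: faces are shared among $k$ vertices of $M$, so per-vertex choices cannot be made independently, and the issue is not primarily preserving the special-set sizes but preserving \emph{biregularity}. The paper's mechanism is to work with \emph{face generators} $\sigma=\{\id,s_1,\dots,s_{k-1}\}$ and select a global subset $F$ of them, keeping exactly the faces $\{m\sigma:m\in\Gamma,\sigma\in F\}$. This automatically makes every $m\in M$ incident to $|F|$ faces; the subtle point is that a single face $f$ can arise as $m\sigma$ for several pairs $(m,\sigma)$, and only if $F$ is a union of \emph{complete equivalence classes} under $\sigma_1\sim\sigma_2 \Leftrightarrow \sigma_1=s^{-1}\sigma_2$ for some $s\in S$ does one get that every face containing $m$ is of the form $m\sigma$ with $\sigma\in F$. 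That is what forces the divisibility condition: it is divisibility by the common equivalence-class size $j\le k$ (hence $k!$ is a safe blanket), not ``color-balance'' as you suggest. Once $F$ is chosen this way, the structured-bipartite properties and the $\tau,\lambda$ bounds are inherited because the truncated incidence graph is a subgraph of the full one; the paper in fact waves away the special-set size verification you flagged as the main obstacle.
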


Specifically for $k=5$, we have $D_L,D_R = \Theta(q^{10})$, $\tau = O(q^{6.5})$ (see \Cref{cor:F53-bound}), $\lambda = O(q^3)$, and $s_L, s_R \in [q^4, O(q^6)]$.

\parhead{Choice of gadget graph.}
We choose $H$ as a constant-sized lossless expanders with some pseudorandom properties, whose motivation will be clearer in the analysis.
One should think of $H$ as being a random graph; the precise properties we need are articulated in \Cref{def:pr-gadget}.
% , and \Cref{lem:pseudorandom-gadget-exists} establishes that random graphs indeed have the desired properties.
\begin{definition}[Pseudorandom gadget]
\torestate{ \label{def:pr-gadget}
    Let $H$ be a $(d_L, d_R)$-biregular bipartite graph on vertex set $([D_L], [D_R])$.
    For each $a,b\in[k]$, let $(A_i\subseteq [D_R])_{i\in[s]}$ be the \emph{special sets} from \Cref{def:structured-bipartite-graph} where $s = s_{R}(a,b)$.
    Define $D\coloneqq D_L + D_R$.
    We say $H$ is a \emph{pseudorandom gadget} if for every $a,b\in [k]$, we have the following properties:
    \begin{enumerate}
        \item \label{property:bucket-spread} For every $S\subseteq [D_L]$ such that $|S| \leq D_R/d_L$ and for every $W\subseteq[s]$ with $|W|\ge \frac{s\log D}{d_L}$,
        $$\sum_{i\in W}|N(S)\cap A_i| \le 32|W| \cdot \max\braces*{ \frac{1}{r}\cdot d_L |S|,\, \log D }\mper$$
        % \item for every $S\subseteq [D_R]$ such that $|S| \ll D_L/d_R$ and for every $W\subseteq[r]$ with $|W| \ge \frac{r\log D}{d_R}$,
        % $$\sum_{i\in W}|N(S)\cap A_i| \le 32|W| \cdot \max\braces*{ \frac{1}{r}\cdot d_R\cdot |S|,\, \log D }\mper$$
        \item \label{property:lossless} For any $S\subseteq [D_L]$ with $|S| = o_D(1) \cdot D_R/d_L$, we have $|N(S)| \ge (1-o_{D}(1)) d_L |S|$.
    \end{enumerate}
}
\end{definition}

\begin{definition}[Good gadget]
    A \emph{good gadget} graph $H$ is a $(d_L, d_R)$-biregular graph on $([D_L], [D_R])$ such that $H$ and $H^{\top}$ are both pseudorandom gadgets.
\end{definition}

\begin{lemma} \label{lem:pseudorandom-gadget-exists}
    For $d_L, d_R \ge \log^2 D$, and $d_R = o_D(1)\cdot D_L$, there exists a good gadget graph $H$.
\end{lemma}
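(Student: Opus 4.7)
The plan is to establish existence of a good gadget by the probabilistic method. I will sample $H$ as a uniformly random $(d_L, d_R)$-biregular bipartite graph on $([D_L], [D_R])$---for concreteness, via the configuration model where left vertices contribute $d_L$ half-edges and right vertices contribute $d_R$ half-edges, and a uniformly random perfect matching is drawn on them---and show that with positive probability both $H$ and its transpose $H^{\top}$ satisfy the bucket spread and lossless properties of \Cref{def:pr-gadget}. Since the analysis for $H^{\top}$ is symmetric up to swapping $(d_L, D_L) \leftrightarrow (d_R, D_R)$, it suffices to verify both properties for $H$ and then union-bound over the two directions.

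For the lossless property, a standard calculation gives that for any fixed $S \subseteq [D_L]$ with $|S| = t$, the probability that $|N(S)| \le (1 - o_D(1)) d_L t$ is bounded by $\binom{D_R}{(1-o_D(1))d_L t}\bigl((1-o_D(1))d_L t / D_R\bigr)^{d_L t}$ up to lower-order factors. Summing over $t = o(D_R/d_L)$ and $\binom{D_L}{t}$ choices of $S$, the hypothesis $d_L \ge \log^2 D$ easily makes the total failure probability $o_D(1)$. This is the classical vertex expansion of random biregular bipartite graphs (e.g.\ the argument in \cite[Theorem 4.16]{HLW06}), and goes through essentially verbatim in our regime.

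For the bucket spread property, fix $a, b \in [k]$ and the associated special sets $(A_i)_{i \in [s]}$, a subset $S \subseteq [D_L]$ of size $t \le D_R/d_L$, and a subset $W \subseteq [s]$ of size $w \ge s\log D / d_L$. Writing $m_W(v) := |\{i \in W : v \in A_i\}|$, we have
\[
    X \;:=\; \sum_{i \in W}|N(S) \cap A_i| \;=\; \sum_{v \in [D_R]} m_W(v) \cdot \mathbf{1}[v \in N(S)].
\]
The size constraint $|A_i| \le 2D_R/s$ gives $\sum_v m_W(v) \le 2 w D_R / s$, and the configuration model satisfies $\Pr[v \in N(S)] = O(d_L t / D_R)$, so $\mathbb{E}[X] = O(w d_L t / s)$, matching the target form of the right-hand side of the bucket spread inequality. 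I then plan to apply a Chernoff-type concentration bound, exploiting the negative association of the edge indicators arising in a uniformly random matching, to show $\Pr\bigl[X \ge 16 w \cdot \max\{d_L t/s,\, \log D\}\bigr] \le \exp(-\Omega(w \log D))$, where the $\log D$ floor is what drives the tail bound in the regime of small expectation. A union bound over $(a,b)$ (of which there are $O(k^2)$ pairs), over $S$ (of which there are at most $D^{O(D_R/d_L)}$), and over $W$ (of which there are at most $2^s$), combined with $d_L \ge \log^2 D$, finishes the argument.

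The main obstacle I expect is the tightness of the concentration-versus-union-bound tradeoff in the bucket spread property: the critical regime is when $t$ is small (so that $\mathbb{E}[X]$ is dominated by the $\log D$ floor) and simultaneously $w$ is close to its lower threshold $s \log D / d_L$, leaving little slack between the tail bound and the entropy of choices of $W$. The hypotheses $d_L, d_R \ge \log^2 D$ and the precise form of the $\log D$ slack in \Cref{def:pr-gadget} are calibrated precisely to make this calculation close. A secondary technical issue is handling the dependence of edge indicators under the configuration model; this is standard, and can be dealt with via switching arguments or by passing to an equivalent permutation-product model where negative association of edge indicators holds straightforwardly and Chernoff applies directly.
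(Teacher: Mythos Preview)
Your overall strategy matches the paper's: sample a uniformly random $(d_L,d_R)$-biregular graph and show both properties hold with high probability, then union bound over the two directions. For losslessness, both you and the paper defer to standard random-biregular-graph expansion (the paper cites \cite{HMMP24}).

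For the bucket spread property, the paper takes a cleaner route than the one you outline. Rather than bounding $X = \sum_{v}m_W(v)\,\mathbf{1}[v \in N(S)]$ via negative association of edge indicators, it exploits the permutation symmetry of the right side: since the law of the random biregular graph is invariant under permutations of $[D_R]$, the variable $\sum_{i \in W}|N(S)\cap A_i|$ has the same distribution as $|N(S)\cap T|$, where $T\subseteq [D_R]$ is a uniformly random set of size $\ell=\sum_{i\in W}|A_i|$ drawn \emph{independently} of $H$. After replacing $N(S)$ by any fixed set of size $d_L|S|$ (which can only increase the count), one is left with a pure hypergeometric tail. This sidesteps your ``secondary technical issue'' entirely.

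There is also a genuine gap in your union bound. The tail you state, $\exp(-\Omega(w\log D))$, does not beat your count of $D^{O(D_R/d_L)}$ choices for $S$: that would require $w \gtrsim D_R/d_L$, whereas the hypothesis only gives $w \ge s\log D/d_L$, and in the relevant parameter regime $s \ll D_R$ (for $k=5$ one has $s \le O(q^6)$ while $D_R = \Theta(q^{10})$), so no choice of the hidden constant rescues this. The fix, which the paper carries out, is to stratify by $t = |S|$ and use that when $t \ge s\log D/d_L$ the Chernoff tail is actually $\exp(-\Omega(w d_L t/s))$; this scales with $t$ and beats $\binom{D_L}{t} \le D^t$ precisely because the lower bound on $w$ forces $w d_L/s \ge \log D$. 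In short, you flagged the small-$t$ regime as delicate for the entropy of $W$, but it is the large-$t$ regime---and the entropy of $S$---where your argument as written actually breaks.
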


In \Cref{sec:random-gadget}, we will prove \Cref{lem:pseudorandom-gadget-exists} by showing that a random gadget satisfies the desired properties with high probability.

\parhead{Combining the parts.} Finally, we state our main theorem below.

\begin{theorem} \label{thm:main-UNE}
    Suppose for some $\delta > 0$, we have:
    \begin{gather*}
        \frac{1}{\lambda} \leq \delta \leq o_D(1) \leq \frac{1}{2k} \mcom
        \quad
        % d_L, d_R\in \bracks*{\max\braces*{\lambda, \sqrt{s_L(a,b)}, \sqrt{s_R(a,b)}} \log^2 D,\  \frac{\delta D}{\tau \log D}} \mcom  \\
        \max_{a,b\in [k]} \braces*{\lambda, \sqrt{s_L(a,b)}, \sqrt{s_R(a,b)}} \frac{\log^2 D}{\delta} \leq d_L, d_R \leq \frac{\delta D}{\tau \log D} \mcom  \\
        \lambda \le \delta^2 \cdot \min_{\substack{a,b\in[k]}}\braces*{ s_L(a,b), s_R(a,b) } \mper
    \end{gather*}
    Let $Z$ be a  $(kd_L, kd_R)$-biregular graph instantiated according to \Cref{def:construction} with a base graph satisfying the properties listed in \Cref{notation:base-graph-notation}, and a good gadget graph.
    There is a constant $\eta > 0$ such that
    \begin{itemize}
        \item for every subset $S\subseteq L(X)$, where $|S|\le \eta|L(X)|$, $S$ has $(1 - \delta - o_D(1))\cdot (k-2) \cdot d_L |S|$ unique-neighbors in $Z$, and

        \item for every subset $S\subseteq R(X)$ where $|S|\le \eta|R(X)|$, $S$ has $(1 - \delta - o_D(1))\cdot(k-2)\cdot d_R |S|$ unique-neighbors in $Z$.
    \end{itemize}
    % Furthermore, there is some group $\Gamma$ of size $O(|Z|)$ for which $Z$ has a free $\Gamma$-action. \rachel{added this line in -- need to prove}\sid{Should only be true when the base graph has a group action (which it does, but currently the assumptions don't capture this.}
\end{theorem}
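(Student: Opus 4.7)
\medskip

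\noindent\textbf{Proof proposal for \Cref{thm:main-UNE}.}

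The plan is to carry out the ``left-to-middle'' and ``middle-to-right'' analyses sketched in \Cref{sec:overview} for a fixed small subset $S \subseteq L$ with $|S| \le \eta |L|$; the argument for $S \subseteq R$ will then follow verbatim by swapping the roles of $G_L$ and $G_R$ and using that $H^\top$ is also a pseudorandom gadget. Set $U := N_{G_L}(S) \subseteq M$, and for each $u \in U$ let $\deg_S(u) = |S \cap \LNbr_u([D_L])|$, which we view as the size of the ``left-side'' of the gadget $H_u$ activated by $S$. The goal is a lower bound of roughly $(1-\delta-o_D(1))(k-2) d_L |S|$ on $|\UN_Z(S)|$. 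I will produce this by first showing that most of $S$'s edges in $G_L$ go to vertices $u \in U$ with small $\deg_S(u)$, then applying the lossless property of $H$ on each such $u$ to collect candidate unique-neighbors, and finally using skeleton expansion and the bucket-spread property to show that few of these candidates get cancelled by a collision coming from another $u' \in U$.

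For the left-to-middle step, choose a threshold $t \approx D_L/d_L$ and split $U = \Uunsat \sqcup \Usat$ according to whether $\deg_S(u) \le t$ or $\deg_S(u) > t$. The key input is that $G_L$ is a $\tau$-small-set triangle expander (\Cref{def:triangle-expansion}): by a standard double-counting argument applied to $\Usat$ (using that $|\Usat| \le k|S|/t$ is $\le \eta |M|$ for the given parameter regime), the number of $v \in L$ with three or more neighbors in $\Usat$ is at most $\tau |\Usat| \le \tau k |S|/t$, which is $o(|S|)$ given $d_L \le \delta D/(\tau \log D)$. Thus all but a $o_D(1)$-fraction of $v \in S$ send at most $2$ edges into $\Usat$ and consequently at least $k-2$ edges into $\Uunsat$. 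Write $S' \subseteq S$ for this ``good'' subset; we will only harvest unique-neighbors coming from edges $(v,u)$ with $v \in S'$ and $u \in \Uunsat$.

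Within each gadget $H_u$ with $u \in \Uunsat$, the set $S_u := S \cap \LNbr_u([D_L])$ satisfies $|S_u| \le t = D_L/d_L \ll D_R/d_L$, so Property~\ref{property:lossless} of \Cref{def:pr-gadget} gives $|N_{H_u}(S_u)| \ge (1-o_D(1)) d_L |S_u|$, and moreover all but $o_D(1) \cdot d_L |S_u|$ of these right-neighbors are \emph{unique} within $H_u$. Summing over the edges from $S'$ to $\Uunsat$ yields at least $(1-o_D(1))(k-2) d_L |S|$ within-gadget unique-neighbors. The main work is now to show that at most a $(\delta + o_D(1))$-fraction of these are cancelled by a collision in some other gadget $H_{u'}$. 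Following the overview, build a multigraph $C$ on $U$ that, for each right-vertex $r \in R$ hit by $S$ from both $H_u$ and $H_{u'}$, contributes an edge $\{u,u'\}$ with multiplicity equal to the number of colliding pairs. The skeleton-expansion hypothesis $\lambda_{\max}(\widetilde{G_R}[U]) \le \lambda$ bounds $|E(\underline C)|$, the simple skeleton of $C$, by roughly $\lambda |U| \le \lambda k |S|/d_L \cdot (\dots)$; the total collision count is then $|E(\underline C)|$ times a per-pair multiplicity bound.

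The per-pair bound is where the new structural feature of the Ramanujan base is used. For $(u,u') \in E(\underline C)$ with $u \in M_a$, $u' \in M_b$, Property~\ref{property:special-sets} of \Cref{def:structured-bipartite-graph} forces $\RNbr_u^{-1}(N_{G_R}(u) \cap N_{G_R}(u'))$ to be one of at most $s = s_R(a,b)$ special sets $A_i \subseteq [D_R]$, each of size $\Theta(D_R/s)$. The multiplicity of edge $\{u,u'\}$ in $C$ is the number of right-neighbors of $S_u$ in $H_u$ that land in this $A_i$. Summing the multiplicity over all $u'$ adjacent to $u$ in $\underline C$ reduces to a sum $\sum_{i \in W} |N_H(S_u) \cap A_i|$ for a set $W$ of indices with $|W| \le \deg_{\underline C}(u)$, which is controlled by the bucket-spread Property~\ref{property:bucket-spread}: provided $|W| \ge s \log D/d_L$ (which is the ``typical'' regime; the small-$|W|$ case is handled crudely), each such sum is at most $O(|W| \cdot \max\{d_L |S_u|/s,\log D\})$. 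Combining with the skeleton bound and the parameter inequalities $\lambda \le \delta^2 \min s$, $d_R \ge \sqrt{s}\log^2 D/\delta$, the total number of collisions is at most $\delta \cdot (k-2) d_L |S|$, as required. The hard part of the argument is this final collision bound: keeping the three-way balance between skeleton expansion (which wants $|U|$ small), the special-set structure (which wants $s$ large so each $A_i$ is narrow), and the gadget pseudorandomness (which wants $d_L$ large enough to smooth over $s$), and verifying that the stated parameter regime actually makes all the inequalities line up. Once this is done for $S\subseteq L$, the proof for $S\subseteq R$ is identical by symmetry between $G_L,G_R$ and between $H,H^\top$.
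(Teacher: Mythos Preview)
Your outline follows the paper's strategy closely and correctly identifies the main ingredients: the triangle-expansion step, lossless gadget expansion on the low-degree set, the collision multigraph $C$, skeleton expansion to control $\underline{C}$, and the bucket-spread property to control multiplicities via the special-set structure. A couple of minor slips: the bound $|U|\le k|S|/d_L$ is wrong (it is $|U|\le k|S|$, since each vertex of $S$ has $k$ edges in $G_L$), and your threshold $t\approx D_L/d_L$ is not quite the paper's $\tau/\delta$; with your $t$, vertices in $\Uunsat$ may have $\deg_S(u)$ up to $D_L/d_L$, which is not $o_D(1)\cdot D_R/d_L$ and so Property~\ref{property:lossless} of the gadget is not directly available. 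These are easily fixed.

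The real gap is in your collision bound. You propose to bound, for each $u\in\Uunsat$, the total multiplicity of edges of $C$ incident to $u$ by applying Property~\ref{property:bucket-spread} with $|W|=\deg_{\underline{C}}(u)$; this yields roughly $\deg_{\underline{C}}(u)\cdot d_L\deg_S(u)/s$, and then you want to sum over $u$. But skeleton expansion only controls $\sum_u \deg_{\underline{C}}(u)\approx \lambda|U|$, not $\sum_u \deg_{\underline{C}}(u)\cdot\deg_S(u)$; a single low-degree $u$ can have $\deg_{\underline{C}}(u)\gg\lambda$ towards the high-degree set $\Usat$ (in your notation), and combining a worst-case $\deg_S(u)\approx t$ with $\sum_u\deg_{\underline{C}}(u)\le 2\lambda|U|$ gives a bound of order $\lambda t|U|/s$, which is far too large under the stated parameter assumptions. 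The paper resolves this by a \emph{second} case split: it separates collisions inside $U_{\low}$ (handled via an orientation with outdegree $\le\lambda$, so the per-vertex $|W|$ really is $\le\lambda$) from collisions between $U_{\low}$ and $U_{\high}$; for the latter it further splits $U_{\low}$ into \emph{saturated} vertices with $\deg_{\underline{C}}(u\to U_{\high})>\lambda/\delta$ and \emph{unsaturated} ones. Unsaturated vertices are handled by bucket-spread with $|W|\le\lambda/\delta$, while for saturated vertices one uses the bipartite eigenvalue inequality $(d_1-1)(d_2-1)\le\lambda^2$ on the bipartite graph $\underline{C}[\Usat, U_{\high}]$ to conclude $|\Usat|\le O(\delta^2|U_{\high}|)$, and then combines this with $|U_{\high}|\le k|S|\delta/\tau$ and the trivial bound $e_C(\Usat,U_{\high})\le |\Usat|\cdot(\tau/\delta)d_L$. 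This saturated/unsaturated argument is the piece your proposal is missing, and without it the collision count does not close.
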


We are now ready to prove \Cref{thm:main-formal}.
\begin{proof}[Proof of \Cref{thm:main-formal}]
    The statement immediately follows by instantiating \Cref{thm:main-UNE} with: (1) the base graph from \Cref{lem:base-graph} with $k = 5$, $q$ large enough so that $d_L,d_R = \Theta(q^{3.25})$, and $D_L,D_R = \Theta(q^{10})$, and (2) the gadget graph from \Cref{lem:pseudorandom-gadget-exists}.
    By \Cref{lem:base-graph}, the base graph is efficiently constructible with $\tau = O(q^{6.5})$ and $\lambda = O(q^3)$ (see \Cref{cor:F53-bound} for the bound on $\tau$), and $q^4 \leq s_L(a,b), s_R(a,b) \leq O(q^6)$ for all $a,b \in [k]$.
    The gadget graph is also efficiently constructible via a brute force search over all graphs on $([D_1],[D_2])$.
    Finally, the tripartite line product can also be computed efficiently.
\end{proof}

% \begin{remark}[Free Group Action]
%     When both $G_L$ and $G_R$ have a free $\Gamma$-action that coincides on $M$ (as is the case for the graphs constructed in \Cref{lem:base-graph}), then $Z$ can be made to also have a free $\Gamma$-action, as follows. First, we should choose the neighbor orderings $\LNbr_v$ and $\RNbr_v$ to be consistent with the $\Gamma$ action: for $g \in \Gamma$ then $g \LNbr_v(i) = \LNbr_{gv}(i)$. Then, since the gadget graph is placed the same way on the neighborhood of each $v \in M$, the resulting edges in $Z$ also respect the group action. 
% \end{remark}

\subsection{Proof overview of \texorpdfstring{\Cref{thm:main-UNE}}{Theorem~\ref{thm:main-UNE}}}
\label{sec:proof-overview}

For \Cref{thm:main-UNE}, we will only prove the unique-neighbor expansion for $S \subseteq L(X)$; the argument for $S \subseteq R(X)$ is exactly the same.
Similar to \cite{HMMP24}, we split the proof of \Cref{thm:main-UNE} into two parts: left-to-middle and middle-to-right.
Given a set $S \subseteq L(X)$, let $U \subseteq M$ be the neighbors of $S$ in $M$ in the base graph.

\parhead{Left-to-middle analysis.}
We divide $U$ into $U_{\ell}$ (low-degree) and $U_{\high}$ (high-degree), and we would like to prove that a large (close to $\frac{k-2}{k}$) fraction of edges leaving $S$ go into $U_{\low}$.
This is desirable because in our gadget $H$ (satisfying \Cref{def:pr-gadget}), small subsets (of $[D_1]$) expand losslessly while we have no guarantees on large subsets.

To do so, we apply the small-set triangle expansion (\Cref{def:triangle-expansion}) of the base graph $G_L$ to $U_{\high}$, which roughly states that very few vertices in $S$ have more than $2$ edges to $U_{\high}$.
Since each vertex in $S$ has degree $k$, this implies that most vertices in $S$ have at least $k-2$ edges going to $U_{\low}$, thus giving a lower bound on $e(S, U_{\low})$.
This is articulated in \Cref{lem:edges-into-Ulow}.

\parhead{Middle-to-right analysis.}
We need to argue that the unique-neighbors from each $u\in U$ (within the gadget $H_u$) do not have too many collisions.
It is convenient to visualize such collisions as follows:
(1) for each $u\in U$, we draw blue edges from $u$ to all its right neighbors that are unique-neighbors of $N_L(u)$ within the gadget $H_u$;
(2) we draw red edges from $u$ to all its right neighbors that are nonunique-neighbors of $N_L(u)$ within $H_u$.
The following is a simple observation:
\begin{observation} \label{obs:un-one-blue-edges}
    The set of unique-neighbors of $S$ in the final graph $Z$ is the set of vertices in $R$ incident to exactly one blue edge and no red edge.
\end{observation}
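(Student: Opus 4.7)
The plan is to prove the observation by directly unpacking the definitions of the (multi)graph $Z$ and of the blue/red edges, which will make the equivalence essentially transparent. For each $r\in R$ and each $u\in M$ with $r$ in the right-neighborhood of $u$ in $G_R$, I would introduce the quantity $m_u(r) := |N_{H_u}(r) \cap S|$, which counts how many elements of $S$ lie in the left-neighborhood of $r$ inside the gadget $H_u$. By \Cref{def:construction}, each pair $(i,j) \in E(H)$ with $\LNbr_u(i) \in S$ and $\RNbr_u(j) = r$ contributes one edge from $S$ to $r$ in $Z$, and every edge from $S$ to $r$ in $Z$ arises this way for a unique $u$ and a unique $(i,j)$. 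Hence the total number of edges from $S$ to $r$ in $Z$, counted with multiplicity over the contributing $u$'s, is exactly $\sum_{u} m_u(r)$, so $r$ is a unique-neighbor of $S$ in $Z$ if and only if this sum equals $1$.

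Next, I would reinterpret the blue/red edge definitions at $r$ in the same language. Directly from the construction preceding the observation, $u$ contributes a blue edge to $r$ precisely when $m_u(r) = 1$ (the single $S$-vertex in $N_{H_u}(r)$ makes $r$ a unique-neighbor of $N_L(u)\cap S$ within $H_u$), $u$ contributes a red edge to $r$ precisely when $m_u(r) \geq 2$, and $u$ contributes no edge to $r$ when $m_u(r) = 0$ or when $r \notin \RNbr_u([D_R])$.

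Finally, since each $m_u(r)$ is a nonnegative integer, $\sum_u m_u(r) = 1$ holds if and only if exactly one $u$ has $m_u(r) = 1$ and every other $u$ has $m_u(r) = 0$. Translating back through the previous paragraph, this is precisely the condition that $r$ is incident to exactly one blue edge and no red edge, giving the claimed equivalence. There is no real obstacle here: the observation is a bookkeeping statement whose only role is to transfer the task of counting unique-neighbors of $S$ in $Z$ onto the auxiliary blue/red edge multigraph on $M \cup R$ that drives the rest of the analysis.
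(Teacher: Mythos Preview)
Your proposal is correct and matches the paper's implicit reasoning: the paper states this as an ``Observation'' with no proof, treating it as immediate from the definitions, and your argument is precisely the unpacking of those definitions that makes it immediate. The only minor clarification worth noting is that blue/red edges are drawn only from $u\in U=N_{G_L}(S)$, but as you implicitly use, $u\notin U$ forces $m_u(r)=0$, so this does not affect the equivalence.
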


We note that since vertices in $U_{\low}$ have low degree on the left, they will have many blue edges going to $R$ (almost $(k-2)d_L |S|$ in total).
We next need to prove that few collisions occur.

\parhead{Bounding collisions.}
We define a (multi-)graph $C$ on vertex set $U$ by placing a copy of the edge $\{u,v\}$ for each $u \in U_{\low}$, $v\in U$, $r\in R$ such that $u$ has a blue edge to $r$ and $v$ has an (either blue or red) edge to $r$.
Then, the number of collisions is exactly $e(C)$.

A collision between $u, v\in U$ can occur only if $u,v$ have common neighbors in $R$.
In other words, the graph $\ul{C}$, defined as the graph obtained by removing parallel edges in $C$, is a subgraph of $\wt{G}_R$ --- the simple graph on $M$ obtained from length-2 walks in $G_R$ (see \Cref{def:skeleton-expansion}).
The most natural attempt to bound $e(C)$ is to use the small-set skeleton expansion of $\wt{G}_R$ (\Cref{def:skeleton-expansion}), which implies that small subgraphs in $\wt{G}_R$ have small average degree (see \Cref{lem:out-degree-bound}).
However, $C$ can have multiplicities, which complicate the analysis.

Let us examine where the multiplicities may come from.
An edge $\{u,v\}$ in $C$ can have multiplicites when they share several common neighbors in $R$ in the bipartite graph $G_R$.
To analyze the common neighborhood structure, we use the fact that $G_R$ satisfies \Cref{property:special-sets} in \Cref{def:structured-bipartite-graph}, which states that the common neighborhood of $u \in M_a$ and $v\in M_b$ is either empty or equal to some``special set''.
We thus have the following crucial observation:

% Recall from \Cref{property:M-partition} of \Cref{def:structured-bipartite-graph} that $M$ is partitioned into $M_1 \cup \cdots \cup M_k$.
% In the bipartite graph $G_{R}$, each $r\in R$ is connected to a unique vertex in $M_a$ for each $a\in [k]$.
% Thus, the $D_2$ neighbors of a vertex $u\in M_a$ in $G_{R}$ can be partitioned according to vertices in $M_b$ for each $b\neq a$, and these are the \emph{special sets} defined in \Cref{property:special-sets} of \Cref{def:structured-bipartite-graph}.
% In particular, each $v \in N_{\wt{G}_{R}}(u) \cap M_b$ (i.e., vertex in $M_b$ that has a common neighbor with $u$ in $G_{R}$) corresponds to one special set (for the pair $a,b\in [k]$).

\begin{observation} \label{obs:bound-multiplicity}
    For $u\in M_a$ and $b \neq a$, consider the $s_{G_R}(a,b)$ special sets in \Cref{property:special-sets} of \Cref{def:structured-bipartite-graph} that correspond to subsets of $N_{G_R}(u) \subseteq R$.
    For any $v \in M_b$ with $\{u,v\} \in \wt{G}_{R}$ (i.e., $N_{G_R}(u) \cap N_{G_R}(v) \neq \varnothing$), the multiplicity of the edge $\{u,v\}$ in $C$ is at most the number of blue or red edges from $u$ that land in any special set.
\end{observation}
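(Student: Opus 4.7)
The plan is to unpack the definition of $C$ and then invoke Property~\ref{property:special-sets} of Definition~\ref{def:structured-bipartite-graph} essentially verbatim. By construction of $C$, the multiplicity of the edge $\{u,v\}$ equals the number of $r\in R$ for which $u$ has a blue edge to $r$ and $v$ has a blue or red edge to $r$. Since the blue and red edges drawn from any middle vertex $w$ land only on vertices of $N_{G_R}(w)\subseteq R$, every such $r$ must lie in $N_{G_R}(u)\cap N_{G_R}(v)$. So the first step is to reduce the multiplicity to counting blue edges from $u$ whose right-endpoint lies in this common neighborhood.

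Next, I would use the assumption that $\{u,v\}\in \wt{G}_R$, i.e., $N_{G_R}(u)\cap N_{G_R}(v)\neq \varnothing$, together with the fact that $u\in M_a$ and $v\in M_b$ with $a\ne b$, to invoke Property~\ref{property:special-sets} of Definition~\ref{def:structured-bipartite-graph} for $G_R$. This immediately pins $N_{G_R}(u)\cap N_{G_R}(v)$ down to one of the $s_R(a,b)$ special sets at $u$: there is some $i\in [s_R(a,b)]$ with $N_{G_R}(u)\cap N_{G_R}(v)=\Nbr_u(A_i)$. Consequently the multiplicity of $\{u,v\}$ is at most the number of $r\in \Nbr_u(A_i)$ to which $u$ has a blue edge, which is in turn at most the number of blue or red edges from $u$ landing inside the special set $\Nbr_u(A_i)$. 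That is the statement of the observation (with ``any'' read as ``some particular'' special set, namely the one corresponding to the pair $(u,v)$).

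There is no real obstacle here: the observation is a transparent repackaging of the structural Property~\ref{property:special-sets}, with the mild conservative step of replacing blue by blue-or-red so the bound is indexed purely by $u$ and a single special set. The content is simply to isolate this structural fact so that in the downstream collision count one can bound the total multiplicity along a fixed vertex $u$ by summing the ``blue-or-red mass'' of $u$ over its $s_R(a,b)$ special sets, where the spread properties of the gadget (\Cref{def:pr-gadget}) can then be applied.
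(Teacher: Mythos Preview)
Your proposal is correct and matches the paper's intended reasoning. The paper states this as an observation without proof, and your unpacking---reduce the multiplicity to counting collision vertices $r\in N_{G_R}(u)\cap N_{G_R}(v)$, then invoke Property~\ref{property:special-sets} to identify this common neighborhood with a single special set $\Nbr_u(A_i)$---is exactly the argument implicit in the surrounding text; your reading of ``any'' as ``some particular'' is also the correct interpretation.
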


In light of this, we can utilize \Cref{property:bucket-spread} of \Cref{def:pr-gadget} of the gadget, which states that the blue or red edges must be evenly spread among the special sets.
For $u \in U_{\low} \cap M_a$, suppose it has $\deg_S(u)$ edges to $S$ on the left, then it has roughly $d_L \cdot \deg_{S}(u)$ blue or red edges going to $R$.
Suppose $u$ has $\lambda$ neighbors in $M_b$ for a fixed $b\in [k]$, which correspond to $\lambda$ of the $s_R(a,b)$ special sets.
Then, \Cref{property:bucket-spread} of \Cref{def:pr-gadget} states that the number of blue or red edges that land in the these special sets is roughly as expected: $\lambda/s_R(a,b) \cdot d_L \cdot \deg_{S}(u)$.
Moreover, by the small-set skeleton expansion of $\wt{G}_R$ (\Cref{def:skeleton-expansion}), it is true that $\ul{C}[U_{\low}]$, a subgraph of $\wt{G}_R[U_{\low}]$, has average degree at most $\lambda$.
This gives an upper bound on the total multiplicites of edges within $U_{\low}$ in $C$.

% \tim{This part is very hard to explain...}

\parhead{Collisions between $U_{\low}$ and $U_{\high}$.}
Unfortunately, we no longer have the degree bounds for edges between $U_{\low}$ and $U_{\high}$ in $\ul{C}$.
For $u \in U_{\low} \cap M_a$, if $\deg_{\ul{C}}(u \to U_{\high} \cap M_b) \leq \lambda/\delta$ (for some $\delta = o_D(1)$ not too small), then we can still use the previous argument that only 
an $o_D(1)$ fraction of the blue or red edges land in the special sets.
However, if $\deg_{\ul{C}}(u \to U_{\high} \cap M_b)$ is large, we need a better argument.

We call such vertices \emph{saturated}, denoted $\Usat$.
We no longer have upper bounds using the edge multiplicities in $C$ and the special sets.
% We only have the naive bound of $\frac{\tau}{\delta} |\Usat| \cdot d_L$ on the edges between $\Usat$ and $U_{\high}$.
We must instead directly upper bound $|\Usat|$.
Consider the bipartite graph between saturated vertices $U_{\high} \cap M_b$.
The key observation is that the maximum eigenvalue is also upper bounded by $\lambda$, and since the degrees of the saturated vertices are $> \lambda/\delta$, the average degree of $U_h \cap M_b$ in this graph is $\lesssim \frac{\lambda^2}{\lambda/\delta} \leq \delta \lambda$ (\Cref{lem:average-degree-bound}).
In particular, this implies that $\Usat$ and $U_{\high} \cap M_b$ are very unbalanced --- $|\Usat| \lesssim \delta^2 |U_{\high}|$.
On the other hand, since the vertices in $U_h$ all have large degrees to $S$ (in the graph $G_L$), we also have an upper bound on $|U_{\high}|$ in terms of $e_{G_L}(S, U) = k|S|$.
This completes the proof.

\parhead{Concrete parameters and requirements.}
To prove the triangle expansion, we analyze the $(k-1)$-dimensional Ramanujan complex of \cite{LSV05, LSV05b} (see \Cref{sec:base-graph}) and show an upper bound of $\tau |U|$ on the number of $(k-1)$-faces in the complex containing a triangle in any small vertex set $U$
(see \Cref{lem:triangle-bound-m}; the proof is carried out in \Cref{sec:triangles}).
On the other hand, the skeleton expansion $\lambda$ follows from known spectral properties of the Ramanujan complex (see \Cref{lem:skeleton-expansion}).
The concrete bounds for $D_L$, $\tau$ and $\lambda$ are stated in \Cref{lem:base-graph}.

\begin{remark}[Parameter requirements] \label{rem:parameter-requirements}
    At a high level, we have two requirements.
    Let $\delta$ be some small enough constant.
    For the left-to-middle analysis, we need to set the degree threshold for $U_{\high} \subseteq M$ to be $\tau/\delta$ in order to get meaningful bounds on $U_{\high}$.
    Thus, for the low-degree vertices to have good unique-neighbor expansion within each gadget $H$, we need $\frac{\tau}{\delta} \cdot d_L < D_L$.
    For the middle-to-right analysis, $\lambda$-expansion roughly means that any small subgraph has average degree $\lambda$, hence we need $d_L > \lambda/\delta$ (here, we ignore the multiplicities for simplicity).
    This gives
    \begin{align*}
        D_L / \tau \gg d_L \gg \lambda \mper
    \end{align*}
    Unfortunately, the above requirements restrict us to $k \leq 5$.
    Let $q$ be the large prime power used in the Ramanujan complex.
    From \Cref{lem:base-graph}, we have:
    \begin{itemize}
        \item $k=3$: $D_L = \Omega(q^{3})$, $\tau = O(q^{1.5})$, and $\lambda = O(q)$. (See also \Cref{lem:k=3-triangle-bound}.)
        \item $k=5$: $D_L = \Omega(q^{10})$, $\tau = O(q^{6.5})$, and $\lambda = O(q^3)$. (See also \Cref{cor:F53-bound}.)
        \item $k=6$: $D_L = \Omega(q^{15})$, $\lambda = O(q^{4.5})$, but the bound on $\tau$ we have is no better than $O(q^{10.5})$.
    \end{itemize}
    We can see that for $k=6$, the parameters do not satisfy the aforementioned requirements.
\end{remark}

\subsection{Proof of \texorpdfstring{\Cref{thm:main-UNE}}{Theorem~\ref{thm:main-UNE}}}  \label{sec:UNE-proof}
% We now focus on proving \Cref{thm:main-UNE}.
% Given a set $S\subseteq L(X)$, our goal is to prove that it has a large number of unique-neighbors in $R(X)$.
We first introduce some notation.
\begin{itemize}
    \item Fix a subset $S \subseteq L$ of size $\leq \eta |L|$.
    Let $U\subseteq M$ be the neighbors of $S$ in $M$ in $G_{L}$, and let $\Gamma_S = G_L[S \cup U]$ be the induced subgraph of $S, U$.
    % \item We use $\Gamma_S$ to denote the induced subgraph between $S$ and $U$.
    \item We use $U_{\high}$ (``high'') to denote the set of vertices in $U$ with degree (in $\Gamma_S$) exceeding $\tau\cdot\frac{1}{\delta}$,
    and we use $U_{\low}$ (``low'') to denote $U \setminus U_{\high}$.
    \item For each $u\in U$, we draw blue edges from $u$ to all its right neighbors that are unique-neighbors of $N_L(u)$ within the gadget $H_u$.
    We will use $\Blue(u)$ to refer to the blue edges incident to $u$.
    \item We draw red edges from $u$ to all its right neighbors that are nonunique-neighbors of $N_L(u)$ within $H_u$.
    We will use $\Red(u)$ to refer to the red edges incident to $u$.

    \item We use $\wt{G}_R$ to denote the simple graph on vertex set $M$ obtained by placing an edge for every $u,v\in M$ such that there is at least one length-$2$ walk $(u,a,v)$ in $G_R$ for $a\in R$.
\end{itemize}

\noindent We start with the left-to-middle analysis.
The following lemma states that almost $\frac{k-2}{k}$ fraction of edges leaving $S$ goes to the low-degree vertices $U_{\low}$.

\begin{lemma}   \label{lem:edges-into-Ulow}
    Suppose $\delta k \leq 1/2$.
    The number of edges in $\Gamma_S$ incident to $U_{\low}$ is at least $(1-4\delta)(k-2)|S|$.
\end{lemma}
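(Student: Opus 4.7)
The plan is to introduce an auxiliary set $S_{\high} \subseteq S$ of ``bad'' left vertices and to bootstrap three cheap inequalities against each other to upper bound $|S_{\high}|$ by $4\delta|S|$. Concretely, let $S_{\high}$ be the vertices of $S$ with $3$ or more edges into $U_{\high}$ in $\Gamma_S$. Every vertex in $S \setminus S_{\high}$ then has at most $2$ edges into $U_{\high}$ and so at least $k-2$ edges into $U_{\low}$, giving
\[
    e(S, U_{\low}) \ \ge\ (k-2)\bigl(|S| - |S_{\high}|\bigr),
\]
so it will suffice to show $|S_{\high}| \le 4\delta|S|$.

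To bound $|S_{\high}|$, I would chain the following three inequalities. First, applying the $\tau$-small-set triangle expansion of $G_L$ (\Cref{def:triangle-expansion}) to $U_{\high} \subseteq M$ (which is legitimate since $|U_{\high}| \le |U| \le k|S|$ is at most $\eta|M|$, once the global constant $\eta$ is chosen small enough given $k$ and $D_L$), we obtain $|S_{\high}| \le \tau \cdot |U_{\high}|$. Second, since every $u \in U_{\high}$ has degree $> \tau/\delta$ in $\Gamma_S$, we have $|U_{\high}| \cdot \tau/\delta \le e(S, U_{\high})$, i.e.\ $|U_{\high}| \le (\delta/\tau)\cdot e(S, U_{\high})$. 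Third, crucially, we bound $e(S, U_{\high})$ using the definition of $S_{\high}$ itself rather than the naive $e(S, U_{\high}) \le k|S|$: vertices of $S_{\high}$ contribute at most $k$ edges each while those in $S \setminus S_{\high}$ contribute at most $2$, yielding
\[
    e(S, U_{\high}) \ \le\ k|S_{\high}| + 2(|S| - |S_{\high}|) \ =\ 2|S| + (k-2)|S_{\high}|.
\]

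Composing the three bounds gives the self-referential inequality
\[
    |S_{\high}| \ \le\ \delta\bigl(2|S| + (k-2)|S_{\high}|\bigr),
\]
which rearranges to $(1 - \delta(k-2))|S_{\high}| \le 2\delta|S|$. Under the hypothesis $\delta k \le 1/2$ we have $1 - \delta(k-2) \ge 1/2$, hence $|S_{\high}| \le 4\delta|S|$, and substituting this into the first display yields the claimed $(1-4\delta)(k-2)|S|$.

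There is no serious obstacle here; the whole argument is a short pigeonhole-style calculation. The one nontrivial step to highlight is the bootstrapping: using the looser bound $e(S, U_{\high}) \le k|S|$ would only give $|S_{\high}| \le \delta k|S|$ and a final factor of $(1-\delta k)(k-2)$, which is \emph{weaker} than $(1-4\delta)(k-2)$ precisely when $k > 4$, i.e., in the regime we care about. Using the definition of $S_{\high}$ to feed back into the bound on $e(S, U_{\high})$ is what makes the constant $4\delta$ (independent of $k$, once $\delta k \le 1/2$) available.
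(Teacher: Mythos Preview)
Your proof is correct and uses the same key ingredients as the paper: defining the ``bad'' set $S_{\ge 3}$ (your $S_{\high}$), applying $\tau$-triangle expansion to bound it via $|U_{\high}|$, and using the degree threshold $\tau/\delta$ on $U_{\high}$. The only difference is in the final bookkeeping: the paper bounds the ratio $e(S_{\ge 3},U_{\high})/e(S,U_{\high}) \le \delta k$ and then computes $e(S,U_{\low})/k|S|$ via a monotonicity argument, whereas you close a self-referential inequality for $|S_{\high}|$ directly using $e(S,U_{\high}) \le 2|S| + (k-2)|S_{\high}|$; both routes land on the same $(1-4\delta)(k-2)|S|$.
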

\begin{proof}
    By definition of $U_h$, the number of edges from $S$ to $U_h$ is at least $\tau\cdot\frac{|U_h|}{\delta}$.
    Define $S_{\ge 3}$ as the vertices of $S$ with at least $3$ neighbors into $U_h$.
    By the small-set triangle expansion (\Cref{def:triangle-expansion}) of $G_L$, the number of vertices in $S_{\ge 3}$ is at most $\tau\cdot |U_{\high}|$.
    Since every vertex in $S_{\ge 3}$ is degree-$k$, we have $e(S_{\ge 3}, U_{\high}) \leq k\tau |U_{\high}|$.
    On the other hand, by definition of $U_{\high}$, each vertex in $U_{\high}$ has at least $\tau/\delta$ edges to $S$, so $e(S, U_{\high}) \geq \tau |U_{\high}| / \delta$.
    Consequently, we have
    \[
        \frac{e(S_{\ge 3}, U_{\high})}{e(S, U_{\high})} \le \delta k\mper
    \]

    Every vertex in $S\setminus S_{\ge 3}$ has at most $2$ edges into $U_{\high}$, and hence at least $k-2$ edges into $U_{\low}$.
    Thus, we have:
    \[
        e(S, U_{\low}) \ge e(S\setminus S_{\ge 3}, U_{\low}) \ge \frac{k-2}{2} e(S\setminus S_{\ge 3}, U_{\high}) \ge \frac{k-2}{2}(1-\delta k) e(S, U_h)\mper
    \]
    Since $\frac{b}{a+b}$ is monotone increasing in $b$ for $a,b>0$, we have:
    \[
        \frac{e(S,U_{\low})}{k|S|} = \frac{e(S,U_{\low})}{e(S, U_{\low}) + e(S, U_{\high})}
        \geq \frac{\frac{k-2}{2}(1-\delta k)}{\frac{k-2}{2}(1-\delta k) + 1}
        = \frac{k-2}{k} \parens*{1 - \frac{2\delta}{1-\delta(k-2)}} \mcom
    \]
    which completes the proof.
\end{proof}

\begin{lemma}   \label{lem:blue-edge-count}
    The number of vertices in $R$ incident to exactly one blue edge and no red edges is at least $(1-o_D(1))(k-2) d_L |S|$.
\end{lemma}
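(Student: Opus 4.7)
The plan is to lower-bound the count of good right-vertices (those incident to exactly one blue edge and no red edges) by
\[
\text{\# good $r$ hit by a blue edge from $U_{\low}$} \;\geq\; \sum_{u \in U_{\low}} |\Blue(u)| \;-\; e(C)\mcom
\]
where $C$ is the collision multigraph on $U$ from \Cref{sec:proof-overview}: for each triple $(u,v,r)$ with $u \in U_{\low}$, $v \in U \setminus \{u\}$, and $r \in \Blue(u) \cap (\Blue(v) \cup \Red(v))$, we place an edge between $u$ and $v$. Every blue edge from $U_{\low}$ that fails to land on a good $r$ contributes at least one edge to $C$.

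\textbf{Step 1: Counting blue edges.} Every $u \in U_{\low}$ satisfies $|S_u| := |S \cap N_{G_L}(u)| \leq \tau/\delta$, and the parameter assumption $d_L \leq \delta D/(\tau\log D)$ forces $|S_u| \leq o_D(1)\cdot D_R/d_L$. \Cref{property:lossless} of \Cref{def:pr-gadget}, applied inside each gadget $H_u$, then gives $|N_{H_u}(S_u)| \geq (1-o_D(1)) d_L |S_u|$; combined with the identity $|\Blue(u)| + \sum_{r \in \Red(u)} \deg_{H_u}(r \to S_u) = d_L |S_u|$, this yields $|\Blue(u)| \geq (1-o_D(1)) d_L |S_u|$. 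Summing over $u \in U_{\low}$ and applying \Cref{lem:edges-into-Ulow} (absorbing $\delta \leq o_D(1)$ from the theorem hypothesis) gives
\[
\sum_{u \in U_{\low}}|\Blue(u)| \;\geq\; (1-o_D(1))(k-2)\, d_L |S|\mper
\]

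\textbf{Step 2: Bounding $e(C)$.} By \Cref{property:special-sets} applied in $G_R$, for distinct $u \in M_a$, $v \in M_b$ with $a \neq b$ the common neighborhood $N_{G_R}(u)\cap N_{G_R}(v)$ is either empty or of the form $\Nbr_u(A_i)$ for some special index $i \in [s_R(a,b)]$. Let $W_b(u,T) \subseteq [s_R(a,b)]$ record which indices are realized by some $v \in T \cap M_b$; by \Cref{obs:bound-multiplicity}, the contribution to $e(C)$ from pairs with $v \in T \cap M_b$ equals $\sum_{u \in U_{\low}\cap M_a}\sum_{i \in W_b(u,T)} |\Blue(u) \cap \Nbr_u(A_i)|$. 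For $T = U_{\low}$: since $\ul{C}[U_{\low}\cap M_a, U_{\low}\cap M_b]$ sits inside $\wt{G}_R[U]$ (whose spectral radius is at most $\lambda$ by \Cref{def:skeleton-expansion}), averaging gives $\sum_u |W_b(u,U_{\low})| \leq \lambda\,|U_{\low}\cap M_a|$; the hypotheses $d_L \geq \lambda\log^2 D/\delta$ and $\lambda \leq \delta^2 s_R(a,b)$ let me apply \Cref{property:bucket-spread} inside each $H_u$, for a total of $o_D(1)\cdot d_L|S|$. For $T = U_{\high}$: split $U_{\low} = \Uunsat \sqcup \Usat$ by whether $|W_b(u, U_{\high})| \leq \lambda/\delta$ for all $b$. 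Unsaturated vertices contribute $o_D(1)\cdot d_L|S|$ via the same spreading argument (with a $1/\delta$ loss); for saturated vertices I directly upper-bound $|\Usat|$ by observing that $\wt{G}_R[\Usat, U_{\high}\cap M_b]$ has spectral norm at most $\lambda$ while every $u \in \Usat$ has degree $> \lambda/\delta$ there, so a spectral-average-degree argument yields $|\Usat| \leq \delta^2|U_{\high}|$. Combined with $|U_{\high}| \leq k\delta|S|/\tau$ (from $e_{G_L}(S,U) = k|S|$ and the degree lower bound on $U_{\high}$), we get $|\Usat| \leq k\delta^3|S|/\tau$, and the trivial per-vertex bound $|\Blue(u)| \leq d_L\tau/\delta$ gives total $\leq k\delta^2 d_L|S| = o_D(1)\cdot d_L|S|$.

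\textbf{Main obstacle.} The saturated-vertex case is the most delicate: we must combine the spectral size bound $|\Usat| \leq \delta^2|U_{\high}|$ with $|U_{\high}| \leq k\delta|S|/\tau$ to absorb the trivial per-vertex collision count $d_L\tau/\delta$, consuming the parameter conditions $\lambda \leq \delta^2 s_R$ and $d_L \geq \lambda\log^2 D/\delta$ most tightly. Reconciling the constants across \Cref{property:bucket-spread} and the skeleton-expansion bound requires careful bookkeeping, but every term of $e(C)$ is then $o_D(1)\cdot d_L|S|$, and subtracting from the Step 1 bound yields the claim.
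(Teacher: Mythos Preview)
Your proposal tracks the paper's argument closely: the blue-edge count via lossless expansion of the gadget plus \Cref{lem:edges-into-Ulow}, the collision multigraph $C$, the split into $U_{\low}$--$U_{\low}$ and $U_{\low}$--$U_{\high}$ collisions, and the saturated/unsaturated analysis for the latter all match. The $T=U_{\high}$ case is handled correctly (in fact with a slightly sharper bound on $|U_{\high}|$ than the paper bothers to use). There is, however, a genuine gap in your treatment of the $T = U_{\low}$ case.

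You write that ``averaging gives $\sum_u |W_b(u,U_{\low})| \le \lambda\,|U_{\low}\cap M_a|$'' and then apply \Cref{property:bucket-spread} inside each $H_u$ for a claimed total of $o_D(1)\cdot d_L|S|$. But the bucket-spread bound for a single $u$ is proportional to $|W_b(u)|\cdot\max\{d_L|S_u|/s,\ \log D\}$, so after summing you must control the cross-term $\sum_u |W_b(u)|\,|S_u|$. Knowing only the aggregate $\sum_u |W_b(u)| \lesssim \lambda|U_{\low}|$ together with $|S_u|\le \tau/\delta$ yields at best
\[
\frac{d_L}{s}\cdot\frac{\tau}{\delta}\cdot\lambda\,|U_{\low}| \;\le\; \tau\delta\cdot k\,d_L|S|
\qquad(\text{using }\lambda\le\delta^2 s),
\]
and $\tau\delta$ is \emph{not} $o_D(1)$ under the theorem's hypotheses: the constraint $\lambda\log^2 D/\delta\le d_L\le \delta D/(\tau\log D)$ forces $\delta\gtrsim\sqrt{\lambda\tau/D}$, so for $k=5$ one gets $\tau\delta\gtrsim q^{6.25}$. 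The paper closes this gap with an orientation argument (\Cref{lem:out-degree-bound}): since $\lambda_{\max}(\ul C[U_{\low}])\le\lambda$, one can orient every edge of $\ul C[U_{\low}]$ so that each vertex has out-degree at most $\lambda$, and then charge the multiplicity of each edge only to its tail. This replaces your aggregate bound with a genuine per-vertex bound $|O_b(u)|\le\lambda$, after which the bucket-spread estimate becomes $o_D(1)\cdot d_L\,|S_u|$ for each $u$, and summing against $\sum_u |S_u|\le k|S|$ finishes. Your ``averaging'' is not a substitute for this degeneracy/orientation step.
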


Before we prove the above lemma, we first show how to complete the proof of \Cref{thm:main-UNE}.
\begin{proof}[Proof of \Cref{thm:main-UNE}]
    The statement is immediate from \Cref{obs:un-one-blue-edges,lem:blue-edge-count}.
\end{proof}

We will need the following folklore facts that we will apply to $U_{\low}$.
\begin{lemma} \label{lem:out-degree-bound}
    For a graph $G$, suppose $\lambda$ is the maximum eigenvalue of the adjacency matrix, then there is an orientation of the edges in $G$ such that all vertices have outdegree at most $\lambda$.
\end{lemma}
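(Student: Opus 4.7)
The plan is to combine a classical orientation--density equivalence (a form of Hakimi's theorem) with the standard Rayleigh-quotient bound on average degree.

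First I would establish the following criterion: $G$ admits an orientation with every outdegree at most $k$ if and only if every induced subgraph $G[T]$ satisfies $|E(G[T])| \leq k|T|$. The forward direction is immediate, because all edges of $G[T]$ must be oriented within $T$, so they contribute at most $k|T|$ in total to outdegrees inside $T$. For the converse, I would invoke Hall's marriage theorem on the bipartite ``edge-to-endpoint'' graph whose left side is $E(G)$ and whose right side consists of $k$ copies of each vertex of $G$, with $\{u,v\}$ adjacent to every copy of $u$ and of $v$. A matching that saturates the left side gives an orientation in which each vertex is the tail of at most $k$ edges. Hall's condition applied to a set $F \subseteq E(G)$ becomes $|F| \leq k|V(F)|$, where $V(F)$ is the set of endpoints of edges in $F$; since $F \subseteq E(G[V(F)])$, this is implied by the density hypothesis applied to $T = V(F)$.

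Next I would verify the density hypothesis with $k = \lambda$. For any $T \subseteq V(G)$, the matrix $A_{G[T]}$ is a principal submatrix of $A_G$, so Cauchy interlacing gives $\lambda_1(A_{G[T]}) \leq \lambda_1(A_G) = \lambda$. The Rayleigh quotient of $A_{G[T]}$ at the all-ones vector in $\mathbb{R}^T$ is exactly the average degree $2|E(G[T])|/|T|$ of $G[T]$, and is bounded above by $\lambda_1(A_{G[T]})$. Consequently $|E(G[T])| \leq \lambda|T|/2 \leq \lambda|T|$, and combining with the criterion above finishes the proof.

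Both ingredients are classical folklore, so I do not anticipate a real obstacle. The only care needed is in the Hall's-theorem setup when $\lambda$ is not an integer: one may round $k$ up to $\lceil \lambda \rceil$ in the matching construction, since the integer-valued outdegree bound ``$\leq \lambda$'' is equivalent to ``$\leq \lfloor \lambda \rfloor$,'' and the density inequality $|E(G[T])| \leq \lambda|T|/2$ comfortably implies $|E(G[T])| \leq \lceil \lambda \rceil |T|$ in any nontrivial case (graphs with $\lambda < 1$ have no edges at all).
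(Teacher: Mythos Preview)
Your proof is correct in substance but takes a different route from the paper. The paper uses a direct peeling argument: since every induced subgraph of $G$ has maximum eigenvalue at most $\lambda$ (Cauchy interlacing), every induced subgraph has average degree at most $\lambda$, hence contains a vertex of degree at most $\lambda$; orient all of that vertex's edges outward, delete it, and repeat. You instead invoke the Hakimi-type orientation criterion and prove it via Hall's theorem, using the same interlacing/Rayleigh-quotient fact to verify the density hypothesis on every induced subgraph. The paper's argument is shorter and fully self-contained; yours ties the result to a classical characterization and actually extracts the sharper bound $|E(G[T])|\le \lambda|T|/2$, at the cost of quoting Hall.

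One small slip in your last paragraph: rounding $k$ \emph{up} to $\lceil \lambda\rceil$ in the matching construction only yields outdegrees $\le \lceil\lambda\rceil$, which can exceed $\lambda$ when $\lambda\notin\mathbb{Z}$. You should take $k=\lfloor\lambda\rfloor$ instead. Your density inequality $|E(G[T])|\le \lambda|T|/2$ already implies $|E(G[T])|\le \lfloor\lambda\rfloor\,|T|$ whenever $\lambda\ge 1$ (since then $\lambda\le 2\lfloor\lambda\rfloor$), and $\lambda<1$ forces $G$ to be edgeless; so the Hall argument goes through with the correct rounding.
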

\begin{proof}
    Let $A$ be the adjacency matrix of $G$, and let $n$ be the number of vertices.
    Note that $\lambda_{\max}(A) \leq \lambda$ implies that all principal submatrices of $A$, i.e., induced subgraphs of $G$, have maximum eigenvalue at most $\lambda$.
    Moreover, $\lambda_{\max}(A) \leq \lambda$ implies that the average degree is at most $\lambda$, since $2|E(G)| = \one^\top A \one \leq \lambda\|\one\|_2^2 = \lambda n$.
    We can thus find a vertex $v$ with degree at most $\lambda$ and orient all its incident edges to point away from $v$.
    Then, we can remove $v$ from $G$ and repeat this process, since all induced subgraphs of $G$ have average degree at most $\lambda$.
    In the end, we obtain an orientation of edges where all outdegrees are at most $\lambda$.
\end{proof}

\begin{lemma}[{\cite[Lemma 6.2]{HMMP24}}] \label{lem:average-degree-bound}
    Let $G$ be a bipartite graph with average left-degree $d_1$ and average right-degree $d_2$.
    Let $\lambda$ be the maximum eigenvalue of the adjacency matrix.
    Then, $(d_1-1)(d_2-1) \leq \lambda^2$.
\end{lemma}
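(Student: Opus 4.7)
My plan is a one-line Rayleigh quotient argument: exhibit a nonnegative test vector whose Rayleigh quotient equals $\sqrt{d_1 d_2}$, which is already strictly stronger than the claim in the generic case. Write $n_1 = |L|$, $n_2 = |R|$, and $m = |E|$, so that $d_1 = m/n_1$ and $d_2 = m/n_2$. I would take the test vector $x \in \mathbb{R}^{n_1+n_2}$ equal to $1/\sqrt{n_1}$ on every left vertex and $1/\sqrt{n_2}$ on every right vertex. Then $\|x\|_2^2 = 2$, and since each edge $\{u,v\}$ of $G$ contributes two entries of $A_G$ with the same product $x_u x_v = 1/\sqrt{n_1 n_2}$, one gets
\[
x^\top A_G x \;=\; \frac{2m}{\sqrt{n_1 n_2}} \;=\; 2\sqrt{d_1 d_2},
\]
where the second equality uses $m^2 = d_1 d_2\, n_1 n_2$. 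The variational characterization of the top eigenvalue then gives $\lambda \geq \sqrt{d_1 d_2}$, hence $\lambda^2 \geq d_1 d_2$.

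To finish, note $d_1 d_2 - (d_1 - 1)(d_2 - 1) = d_1 + d_2 - 1$, so $\lambda^2 \geq d_1 d_2 \geq (d_1 - 1)(d_2 - 1)$ whenever $d_1 + d_2 \geq 1$, which holds for essentially every nontrivial graph. The only regime where this inequality could fail is when both average degrees are below $1$; but there $(d_1 - 1)(d_2 - 1) \leq 1$, and any graph containing at least one edge already has $\lambda \geq 1$ (the empty graph being vacuous). I expect no real obstacle here---the argument is exactly the bipartite analog of the elementary bound ``top eigenvalue is at least the average degree'', and the only mildly clever step is choosing asymmetric weights $1/\sqrt{n_1}$ versus $1/\sqrt{n_2}$ on the two sides so that the Rayleigh quotient produces the geometric mean $\sqrt{d_1 d_2}$ rather than some weaker arithmetic combination.
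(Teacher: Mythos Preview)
The paper does not prove this lemma; it is quoted verbatim as \cite[Lemma 6.2]{HMMP24} and used as a black box, so there is no in-paper argument to compare against. Your proof is correct. The Rayleigh quotient with the weighted indicator vector is the natural choice and in fact yields the stronger inequality $\lambda^2 \geq d_1 d_2$, from which the stated bound follows because $d_1 d_2 - (d_1-1)(d_2-1) = d_1 + d_2 - 1 \geq 0$ whenever $d_1 + d_2 \geq 1$. Your treatment of the residual case $d_1 + d_2 < 1$ via eigenvalue interlacing on a single edge (giving $\lambda \geq 1 \geq \sqrt{(1-d_1)(1-d_2)}$) is also sound; the only genuinely degenerate instance is the edgeless graph, where the inequality as written fails but the statement is vacuous for the paper's purposes, since in its sole application every left vertex has degree at least $\lambda/\delta \gg 1$.
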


Now, we prove \Cref{lem:blue-edge-count}.
\begin{proof}[Proof of \Cref{lem:blue-edge-count}]
    We will refer to a vertex $r$ in $R$ incident to exactly one blue edge and no red edges as a \emph{blue unique-neighbor}.
    By lossless expansion of the gadget $H$ (\Cref{property:lossless} of \Cref{def:pr-gadget}) and \Cref{lem:edges-into-Ulow}, the number of blue edges from $U_{\low}$ to $R$ is at least
    \begin{align*}
        \sum_{u\in U_{\low}} (1-o_D(1)) d_L \cdot \deg_{\Gamma_S}(u)
        = (1-o_D(1)) d_L \cdot e(S, U_{\low})
        \geq (1-o_D(1)) \cdot d_L (k-2)|S| \mcom
    \end{align*}
    where we use the fact that $\deg_{\Gamma_S}(u) \leq \frac{\tau}{\delta} \leq \frac{D}{d_L \log D}$ (required in \Cref{def:pr-gadget}) for all $u\in U_{\low}$.

    To control the number of blue unique-neighbors, we bound the number of blue edges to $R$ that ``collide'' with another edge using bounds on the edge density of small sets in $\wt{G}_{R}$.

    Construct a (multi-)graph $C$ on vertex set $U$ by placing a copy of the edge $\{u,v\}$ for each $u \in U_{\low}$, $v\in U$, $r\in R$ such that $u$ has a blue edge to $r$ and $v$ has an (either blue or red) edge to $r$.
    Let $\ul{C}$ be the graph obtained by removing duplicate edges from $C$.
    By \Cref{obs:un-one-blue-edges}, the number of blue unique-neighbors is at most
    \[
        (1-o_D(1))\cdot (k-2) d_L|S| - 2e(C)\mper
    \]
    It suffices to show that $e(C) \leq o_D(1) \cdot kd_L |S|$.
    To bound $e(C)$, we write it as $e_C(U_{\low}) + e_C(U_{\low}, U_{\high})$ (note that there are no edges within $U_{\high}$ in $C$), and bound each term separately.

    We first bound $e(U_{\low})$.
    By $\lambda$-small-set skeleton expansion of $\wt{G}_R$, the largest eigenvalue of $\ul{C}[U_{\low}]$ is bounded by $\lambda$.
    Consequently, by \Cref{lem:out-degree-bound}, there is an orientation of the edges of $\ul{C}[U_{\low}]$ such that all vertices have outdegree bounded by $\lambda$.
    Pick such an orientation and let $O(v)$ denote the set of outgoing edges incident to a vertex $v$.
    We can write:
    \[
        e_C(U_{\low}) = \sum_{v\in U_{\low}} \sum_{e\in O(v)} \mathrm{multiplicity}(e) \mper
    \]
    Fix $a\neq b\in [k]$.
    We will write $s \coloneqq s_R(a,b)$ for simplicity (when $a,b\in [k]$ are clear from context).
    For $v\in M_a$, define $O_b(v)$ as the set of $e\in O(v)$ directed towards a vertex in $M_b$.
    By \Cref{obs:bound-multiplicity}, we can bound $\sum_{e\in O_b(v)} \mathrm{multiplicity}(e)$ by the number of blue or red edges that land in any $|O_b(v)|$ of the special sets.
    Since $|O_b(v)| \le \lambda < \frac{\lambda}{\delta}$ (here, we use a loose bound of $\frac{\lambda}{\delta}$ on $|O_b(v)|$ for convenience later), we can apply the bound in \Cref{property:bucket-spread} of \Cref{def:pr-gadget} with $|S| = \deg_{\Gamma_S}(v)$ and $|W| = \max\{\frac{\lambda}{\delta}, \frac{s \log D}{d_L}\}$ and get
    \begin{equation}  \label{eq:bound-multlicities}
    \begin{aligned}
        % \sum_{e\in O(v)} \mathrm{multiplicity}(e) &=
        % \sum_{b = 1}^k 
        \sum_{e\in O_b(v)} \mathrm{multiplicity}(e) 
        &\le 32 \sum_{b=1}^k \max \braces*{\frac{\lambda}{\delta},\ \frac{s \log D}{d_L}} \cdot \max\braces*{ \frac{1}{s}\cdot d_L \cdot \deg_{\Gamma_S}(v),\ \log D } \\
        &\le 32\sum_{b=1}^k \max\braces*{ \frac{\lambda}{\delta s},\ \frac{\lambda \log D}{\delta d_L \deg_{\Gamma_S}(v)},\ \frac{\log D}{d_L},\ \frac{s \log^2 D}{d_L^2 \deg_{\Gamma_S}(v)} }
        \cdot d_L \cdot \deg_{\Gamma_S}(v) \\
        &\leq o_D(1) \cdot d_L \cdot \deg_{\Gamma_S}(v) \mper
    \end{aligned}
    \end{equation}
    Here, we use the assumptions on the parameters listed in \Cref{thm:main-UNE}: $\lambda \leq \delta^2 s$, and $d_L \geq \frac{1}{\delta}\max\{\lambda, \sqrt{s}\} \log^2 D \geq \log^2 D$.

    Summing over all $b\in [k]$ gives at most a $k$ factor, and $\sum_{v\in U_{\low}} \deg_{\Gamma_S}(v) \leq k|S|$.
    Thus, we have
    \begin{align*}
        e_C(U_{\low}) \leq o_D(1) \cdot k d_L |S| \mper
    \end{align*}

    We now turn our attention to showing a bound on $e(U_{\low}, U_{\high})$, which we can write as:
    \[
        \sum_{a,b\in[k]} e_C\parens*{ U_{\low} \cap M_a, U_{\high} \cap M_b }\mper
    \]
    Again, we fix $a, b$ and prove a bound on the summand.
    Call a vertex $v\in U_{\low}\cap M_a$ \emph{saturated} if $\deg_{\ul{C}}\parens*{v\to U_{\high}\cap M_b} > \frac{\lambda}{\delta}$, and call a vertex \emph{unsaturated} otherwise.
    We use $\Usat$ and $\Uunsat$ to denote the saturated and unsaturated vertices in $U_{\low} \cap M_a$ respectively.

    For any unsaturated vertex, its degree in $\ul{C}$ is at most $\frac{\lambda}{\delta}$, and thus, the exact same calculation as \Cref{eq:bound-multlicities} shows that
    \begin{align*}
        e_C(\Uunsat, U_{\high} \cap M_b) \leq o_D(1) \cdot k d_L |S| \mper
    \end{align*}
    Now, we bound the contribution of saturated vertices.
    Here, we only have the naive bound
    \begin{align*}
        e_C(\Usat, U_h \cap M_b) \leq \sum_{u\in \Usat} \deg_{\Gamma_S}(u) \cdot d_L
        \leq \frac{\tau}{\delta} \cdot d_L |\Usat| \mcom
    \end{align*}
    since $\deg_{\Gamma_S}(u) \leq \frac{\tau}{\delta}$ for all $u\in \Usat \subseteq U_{\low}$.
    It remains to bound $|\Usat|$.

    Consider the bipartite graph between $\Usat$ and $U_h \cap M_b$, within $\ul{C}$.
    We know that (i) the top eigenvalue of this bipartite graph is at most $\lambda$, by $\lambda$-small-set skeleton expansion of $\wt{G}_R$, and
    (ii) every vertex in $\Usat$ has degree at least $\frac{\lambda}{\delta}$.
    Thus, by \Cref{lem:average-degree-bound}, the average degree of $U_{\high} \cap M_b$ is at most $\delta \lambda + 1$ which is at most $2\delta \lambda$ by the assumption that $\delta \lambda \geq 1$.
    In particular, we have
    \begin{align*}
        |\Usat| \leq |U_{\high}\cap M_b| \cdot \frac{2\delta \lambda}{\lambda/\delta}
        \leq 2\delta^2 |U_{\high}| \mper
    \end{align*}
    On the other hand, since every vertex $u$ in $U_h$ has $\deg_{\Gamma_S}(u) \geq \tau$, we have
    $\tau |U_{\high}| \leq e_{\Gamma_S}(S, U_{\high}) \leq k|S|$.
    This implies that
    \begin{align*}
        e_C(\Usat, U_h \cap M_b) \leq 2\delta \cdot kd_L |S| \leq o_D(1) \cdot kd_L |S| \mcom
    \end{align*}
    since $\delta \leq o_D(1)$.
    Summing up contributions from all $a,b\in [k]$ gives an extra $k^2$ factor.

    Combining all of the above, we get that $e(C) \leq o_D(1) \cdot kd_L |S|$, completing the proof.
\end{proof}

\section{Construction of the base graph}
\label{sec:base-graph}

We first list the properties of the Ramanujan complex of \cite{LSV05, LSV05b} that we use to construct our base graph.
For standard terminology pertaining simplicial complexes, see, e.g., \cite[Section 2.4]{ALOV19}.
Moreover, we recall the definitions and approximations (for large $q$) of Gaussian binomial coefficients: $[k]_q = \frac{q^k-1}{q-1} \sim q^{k-1}$, $[k]_q! = [1]_q [2]_q \cdots [k]_q \sim q^{\binom{k}{2}}$, and $\gbinom{k}{i}_q = \frac{[k]_q!}{[i]_q![k-i]_q!} \sim q^{i(k-i)}$, and note that $\gbinom{k}{i}_q \leq \gbinom{k}{\floor{k/2}}_q \sim q^{\floor{k^2/4}}$.

\begin{definition}[Cayley clique complex]
    Let $\Gamma$ be a group, and let $S$ be a symmetric set of generators of $\Gamma$.
    We use $\calH = \Cay_k(\Gamma,S)$ to denote the $(k-1)$-dimensional complex constructed by choosing each $\{u_0,\dots,u_{k-1}\}\subseteq\Gamma$ as a face iff for every $a,b\in[k]$, $u_a u_b^{-1}\in S$.
    We refer to $\calH$ as the \emph{Cayley clique complex} on $\Gamma$ with generating set $S$.
\end{definition}

\begin{definition}[Incidence graph]
    The \emph{incidence graph} $\Inc_{\calH}$ of a simplicial complex $\calH$ is a bipartite graph between the vertices and $(k-1)$-faces of $\calH$, where there is an edge between a vertex $v$ and a face $e$ if $e$ contains $v$.
\end{definition}

\begin{definition}[Unweighted skeleton]
    The \emph{unweighted skeleton} $\Skel_{\calH}$ of a complex $\calH$ refers to the simple graph with vertex set $\Gamma$ obtained by placing an edge between $u$ and $v$ if they share a face.
\end{definition}

\begin{theorem}[{\cite{LSV05,LSV05b}}] \label{thm:LSV}
    Fix a prime power $q$ and integer $k\ge 2$.
    There is an algorithm that, on input an integer $e > 1$ (with $q^e > 4k^2 + 1$), constructs:
    \begin{itemize}
        \item The group $\Gamma = \mathrm{PGL}_k\parens*{\bbF_{q^e}}$, which has cardinality
        $n \coloneqq \frac{1}{q^e-1} \prod_{0\le i \le k} \parens*{ q^{ek} - q^{ei} }\sim q^{ek^2-1}$.
        \item A set of generators $S = S_1 \sqcup \dots \sqcup S_{k-1} \subset \Gamma$, where $\abs*{S_i} = \gchoose{k}{i}_q \sim q^{i(k-i)}$ (the Gaussian binomial coefficient, equal to the number of subspaces of dimension $k$ in $\bbF_q^k$); hence $|S| \sim q^{\floor{k^2/4}}$.
    \end{itemize}
    The algorithm runs in time $\poly(n)$.
    The resulting Cayley complex $\Cay_k(\Gamma,S)$, called the Ramanujan clique complex, is a $k$-partite $(k-1)$-dimensional simplicial complex with the following properties.
    \begin{itemize}
        \item The vertex set $V$, of size $n$, has equal-size parts $V_0,V_1,\dots,V_{k-1}$.
        \item For each $0 < i < k$, the generating set $S_i$ creates directed edges that go from $V_a$ to $V_{a+i}$ (all part indices are taken mod $k$).
        \item Let $A_i$ denote the directed subgraph on $V$ consisting of just those edges created by $S_i$, so $A_i$ is out-regular of degree $\gchoose{k}{i}_q \sim q^{i(k-i)}$.
        \item Thought of as adjacency matrices, $A_1,\dots,A_{k-1}$ commute (and are normal); thus they are simultaneously diagonalizable.
        For each of the $n$ eigenvectors, there is an ``eigentuple'' $\vec{\lambda}$ of eigenvalues $(\lambda_1,\dots,\lambda_{k-1})\in\C^{k-1}$ associated to $A_1,\dots,A_{k-1}$.
        \item We say that an eigentuple is trivial if $|\lambda_i| = \gchoose{k}{i}_q$ for all $i$.
        The eigenvectors corresponding to the trivial eigentuple take on a constant value on each part.
        \item Every nontrivial eigenvalue of $A_k$ has modulus at most $\binom{k}{i}\sqrt{q^{i(k-i)}}$.
        \item For any $i$ and any vertex $v\in V_i$, the link of $v$ can be identified with the spherical building $\bbP(\bbF_q^k)$.
        Here, $\bbP(\bbF_q^k)$ is the $(k-1)$-partite $(k-2)$-dimensional simplicial complex whose vertices are all the non-trivial subspaces of $\bbF_q^{k}$ (i.e., not $\{0\}$ and not $\bbF_q^{k}$), and a $t$-face corresponds to a chain of subspaces $W_0 \subset W_1 \subset W_2 \subset \cdots \subset W_t$.
        The $j$-dimensional subspaces exactly correspond to the neighbors of $v$ in $V_{i+j}$.
    \end{itemize}
\end{theorem}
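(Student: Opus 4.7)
The plan is to realize $\Cay_k(\Gamma, S)$ as a finite congruence quotient of the affine Bruhat--Tits building of $\mathrm{PGL}_k$ over a local function field, and then to invoke Lafforgue's proof of the Ramanujan--Petersson conjecture for $\mathrm{GL}_k$ over function fields for the spectral bound. All of the combinatorial claims (cardinalities, $k$-partition, out-degrees, commutativity, link structure) are expected to fall out of standard building theory once the construction is set up; the only genuinely deep input is the eigenvalue bound.

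First I would set up the ambient infinite object. Let $F = \mathbb{F}_q((t))$ and let $\mathcal{B}$ be the affine Bruhat--Tits building of $\mathrm{PGL}_k(F)$: a $(k-1)$-dimensional simplicial complex whose vertices are homothety classes of $\mathbb{F}_q[[t]]$-lattices in $F^k$, with a canonical ``type'' map to $\mathbb{Z}/k\mathbb{Z}$ inducing a $k$-partition. By the general theory of affine buildings of type $\widetilde{A}_{k-1}$, the link of every vertex of $\mathcal{B}$ is canonically isomorphic to the spherical building of type $A_{k-1}$ over $\mathbb{F}_q$, i.e., the flag complex of nontrivial subspaces of $\mathbb{F}_q^k$; a directed adjacency that shifts type by $i$ corresponds to a choice of $i$-dimensional subspace, of which there are $\gchoose{k}{i}_q$, giving the claimed out-regularity of $A_i$.

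Next I would descend to a finite quotient. Following the Cartwright--\'Sniatycki and Lubotzky--Samuels--Vishne approach, one builds a cocompact arithmetic lattice $\Lambda \leq \mathrm{PGL}_k(F)$ acting simply transitively on vertices of $\mathcal{B}$ from the unit group of a degree-$k$ central division algebra $D$ over $\mathbb{F}_q(t)$ ramified at exactly two places. Passing to a principal congruence subgroup $\Lambda(I)$ of sufficiently deep level and applying strong approximation identifies the vertex set of $\Lambda(I)\backslash\mathcal{B}$ with $\Gamma = \mathrm{PGL}_k(\mathbb{F}_{q^e})$ for an appropriate $e$, at which point the cardinality formula for $|\Gamma|$ is just the standard order of $\mathrm{PGL}_k$ over a finite field. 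The generators $S_i$ are chosen as coset representatives that shift type by $i$, so $|S_i| = \gchoose{k}{i}_q$. Because $A_1, \ldots, A_{k-1}$ are then realized as the natural generators of the commutative spherical Hecke algebra of $\mathrm{PGL}_k(F)$ with respect to the maximal compact $K = \mathrm{PGL}_k(\mathbb{F}_q[[t]])$, they mutually commute and are simultaneously diagonalizable. The link-identification statement is inherited directly from $\mathcal{B}$ because for $I$ deep enough $\Lambda(I)$ acts freely on links.

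The main obstacle is the Ramanujan eigenvalue bound. The plan here is to express the nontrivial spectrum of $(A_1, \ldots, A_{k-1})$ in terms of Satake parameters of the automorphic representations of $\mathrm{GL}_k$ over $\mathbb{F}_q(t)$ that appear in $L^2\bigl(\Lambda(I)\backslash \mathrm{PGL}_k(F)/K\bigr)$. Transferring via the Jacquet--Langlands correspondence to the inner form $D^\times$ (which has no cusps because $D$ is a division algebra, so every relevant representation is cuspidal), Lafforgue's theorem gives that each Satake parameter has absolute value $1$. Through the Satake isomorphism, each nontrivial eigenvalue $\lambda_i$ is then a sum of at most $\binom{k}{i}$ Weyl-orbit terms, each of modulus $\sqrt{q^{i(k-i)}}$, producing the claimed bound $|\lambda_i| \leq \binom{k}{i}\sqrt{q^{i(k-i)}}$. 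The trivial eigentuple corresponds to one-dimensional representations factoring through the determinant, whose associated Hecke eigenvectors are constant on each of the $k$ parts, as required.
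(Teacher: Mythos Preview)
The paper does not give a proof of this theorem; it is stated as a black-box citation of \cite{LSV05,LSV05b} (with related references to Ballantine, Li, Cartwright--Sol\'e--\.{Z}uk, and Sarveniazi) and used as input to the rest of the construction. Your outline is essentially the argument of those references: realize the complex as a congruence quotient of the affine Bruhat--Tits building of $\mathrm{PGL}_k$ over a local function field via an arithmetic lattice coming from a division algebra, read off the combinatorial structure (partition, out-degrees, links) from standard $\widetilde{A}_{k-1}$ building theory, identify the $A_i$ with Hecke operators to get commutativity and normality, and then deduce the eigenvalue bound from Lafforgue's Ramanujan theorem via Jacquet--Langlands and the Satake isomorphism. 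So there is nothing to compare against in the present paper, and your sketch is faithful to the cited sources.

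One small wrinkle worth tightening if you ever write this out in full: the Jacquet--Langlands step is not quite ``every relevant representation is cuspidal'' on $D^\times$; rather, the spectrum on the compact inner form is automatically discrete, and the infinite-dimensional constituents transfer to cuspidal automorphic representations of $\mathrm{GL}_k$ over $\mathbb{F}_q(t)$, where Lafforgue then applies. Also, the simple-transitivity of the lattice on vertices (so that the quotient vertex set is literally a group and the complex is a Cayley clique complex) is a nontrivial additional ingredient in the LSV papers that you are implicitly assuming; it is what makes the generators $S_i$ live inside $\Gamma$ rather than merely being Hecke operators.
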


We now use the known properties of the Ramanujan complex listed above in \Cref{thm:LSV} to prove that for any pair of parts $i$ and $j$, the bipartite graph of the $1$-skeleton of the Ramanujan clique complex is $O(1)$-Ramanujan.
\begin{lemma}  \label{lem:bipartite-spectral-bound}
    For $i, j\in [k]$ such that $i > j$, let $A$ denote the adjacency matrix of the bipartite graph between $V_i$ and $V_j$.
    We have:
    \[
        \lambda_2(A)\le 2 \binom{k}{i-j}\sqrt{q^{(i-j)(k-i+j)}}\mper
    \]
\end{lemma}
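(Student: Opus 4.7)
The plan is to reduce the claimed second-eigenvalue bound to the Ramanujan bound on the directed operators $A_1, \ldots, A_{k-1}$ provided by \Cref{thm:LSV}. Let $d \coloneqq i - j$. The symmetric adjacency matrix $A$ of the bipartite graph between $V_j$ and $V_i$ has the form $\begin{pmatrix} 0 & B \\ B^\top & 0\end{pmatrix}$, where $B$ is exactly the $V_j \times V_i$ sub-block of the directed adjacency matrix $A_d$ (whose edges go $V_a \to V_{a+d}$). Since the eigenvalues of $A$ are $\pm\sigma_\ell(B)$ where $\sigma_\ell(B)$ are the singular values of $B$ in decreasing order, it suffices to bound $\sigma_2(B)$.

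The symmetry of the generating set $S$ forces $S_d^{-1} = S_{k-d}$, and hence $A_d^\top = A_{k-d}$. The composition $A_d A_{k-d} = A_d A_d^\top$ sends $V_a \to V_{a+d} \to V_a$ for each $a$, so it is block-diagonal with respect to the decomposition $\mathbb{R}^V = \bigoplus_a \mathbb{R}^{V_a}$, and its $V_j$-block is exactly $BB^\top$. Because $A_1, \ldots, A_{k-1}$ commute, are normal, and are simultaneously diagonalizable, $A_d A_{k-d}$ acts as $|\lambda_d|^2$ on the simultaneous eigenvector with eigentuple $\vec{\lambda}$: this gives $\gchoose{k}{d}_q^{\,2}$ on trivial eigentuples and at most $\binom{k}{d}^2 q^{d(k-d)}$ on nontrivial ones, by the Ramanujan bound in \Cref{thm:LSV}.

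To transfer this global spectral information to the restriction $BB^\top$, I would identify the $\gchoose{k}{d}_q^{\,2}$-eigenspace of $A_d A_{k-d}$ as exactly $W \coloneqq \mathrm{span}\{\mathbf{1}_{V_a} : a \in [k]\}$. The inclusion $W \subseteq \ker\bigl(A_d A_{k-d} - \gchoose{k}{d}_q^{\,2} I\bigr)$ follows from the direct computation $A_d \mathbf{1}_{V_a} = \gchoose{k}{d}_q \mathbf{1}_{V_{a+d}}$, and the reverse inclusion uses the strict inequality $\binom{k}{d}\sqrt{q^{d(k-d)}} < \gchoose{k}{d}_q$ (valid under the $q^e > 4k^2 + 1$ hypothesis) to rule out nontrivial eigentuples at eigenvalue $\gchoose{k}{d}_q^{\,2}$. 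Since $W \cap \mathbb{R}^{V_j} = \mathbb{R}\cdot \mathbf{1}_{V_j}$ is one-dimensional, the vector $\mathbf{1}_{V_j}$ accounts for $\sigma_1(B) = \gchoose{k}{d}_q$. Any other eigenvector $w$ of $BB^\top$ orthogonal to $\mathbf{1}_{V_j}$, extended by zero outside $V_j$, yields a vector $\hat{w} \in W^\perp$ that remains an eigenvector of $A_d A_{k-d}$ with the same eigenvalue, by block-diagonality; this eigenvalue must therefore be at most $\binom{k}{d}^2 q^{d(k-d)}$, yielding $\sigma_2(B) \le \binom{k}{d}\sqrt{q^{d(k-d)}}$ and the stated bound with the factor of $2$ as slack.

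The step requiring the most care is this final transfer from the global operator $A_d A_{k-d}$ to its restricted block $BB^\top$. The argument hinges on two ingredients: the $V_a$-block-diagonal structure of $A_d A_{k-d}$ (so that eigenvectors supported on $V_j$ correspond to eigenvectors of the block), and the explicit identification of the trivial eigenspace $W$ (so that orthogonality to $\mathbf{1}_{V_j}$ forces membership in the nontrivial part of the spectrum). Both ingredients follow directly from properties articulated in \Cref{thm:LSV}, so the argument should go through cleanly.
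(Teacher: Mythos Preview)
Your approach is correct and takes a genuinely different route from the paper. The paper works directly with a nontrivial eigenvector $v$ of $A$ (supported on $V_i \cup V_j$, extended by zero to all of $V$) and observes that $(A_{i-j} + A_{j-i})v$ equals $Av$ (or $2Av$, when $i-j \equiv j-i$) plus a vector supported outside $V_i \cup V_j$; hence $\|Av\| \le \|(A_{i-j}+A_{j-i})v\|$. Since the restriction of $v$ to each part is mean-zero, $v$ is orthogonal to the trivial eigenspace of $A_{i-j}+A_{j-i}$, and the factor of $2$ comes from summing the two operator norms. Your route via the block identity $BB^\top = (A_d A_{k-d})\big|_{V_j}$ is more structural and in fact yields the sharper bound $\sigma_2(B) \le \binom{k}{d}\sqrt{q^{d(k-d)}}$ without the $2$; the paper's argument is quicker but loses that constant.

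One small cleanup: the strict inequality $\binom{k}{d}\sqrt{q^{d(k-d)}} < \gchoose{k}{d}_q$ is not actually needed for your argument to go through (and the hypothesis $q^e > 4k^2+1$ from \Cref{thm:LSV} is a condition on the quotient field $\bbF_{q^e}$, not on $q$ itself, so it does not directly give you this). You can instead argue as follows: the space $W = \mathrm{span}\{\mathbf{1}_{V_a}\}$ is invariant under every $A_i$, and the restriction $A_i|_W$ is a scalar multiple of a permutation matrix, so every simultaneous eigenvector lying in $W$ has a trivial eigentuple; combined with the statement in \Cref{thm:LSV} that trivial eigenvectors lie in $W$, this shows the span of the trivial eigenvectors is exactly $W$. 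Consequently $W^\perp$ is spanned by nontrivial eigenvectors, and since your $\hat w$ lies in $W^\perp$ (automatically orthogonal to $\mathbf{1}_{V_a}$ for $a\neq j$, and to $\mathbf{1}_{V_j}$ by hypothesis), the eigenvalue bound on $A_d A_{k-d}|_{W^\perp}$ applies directly.
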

\begin{proof}
    Let $v$ be a nontrivial eigenvector of $A$.
    Observe that $(A_{i-j} + A_{j-i})v = (1+\Ind[i-j = j-i])Av + w$, where the support of $w$ is disjoint from that of $Av$. 
    Thus:
    \(
        \norm*{Av} \le \norm*{\parens*{ A_{i-j} + A_{j-i} } v}\mper
    \)

    Now, observe that for any $t\in[k]$, we have $\angles*{v_t, \vec{1}_t} = 0$ where $v_t$ and $\vec{1}_t$ denote their respective restrictions to $V_t$.
    Consequently, $v$ must be orthogonal to all the trivial eigenvectors of $A_{i-j} + A_{j-i}$, and hence must be spanned by its nontrivial eigenvectors.
    By the bound on the nontrivial eigenvalues from \Cref{thm:LSV}, we have:
    \[
        \norm*{\parens*{ A_{i-j} + A_{j-i} } v} \le 2\binom{k}{i-j}\sqrt{q^{(i-j)(k-i+j)}}\mper
    \]
    Consequently, we have $\lambda_2\parens*{A} \le 2\binom{k}{i-j}\sqrt{q^{(i-j)(k-i+j)}}$.
\end{proof}

\subsection{Properties of the spherical building}
\begin{observation}
    The $1$-skeleton of $\bbP(\F_q^k)$ is a $(k-1)$-partite graph with vertex sets $V_1, V_2,\dots, V_{k-1}$, where $V_i$ consists of $i$-dimensional subspaces in $\F_p^k$.
    Thus, $|V_i| = \gbinom{k}{i}_q$, which are the Gaussian binomial coefficients.
    For $i < j$, the bipartite graph between $V_i$ and $V_j$ in the $1$-skeleton has left and right degree $\gbinom{k-i}{j-i}_q \approx q^{(j-i)(d-j)}$ and $\gbinom{j}{i}_q\approx q^{i(j-i)}$ respectively.
\end{observation}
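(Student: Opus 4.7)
The plan is that all three assertions of the observation follow by direct counting of subspaces of $\mathbb{F}_q^k$, using only the definition of $\mathbb{P}(\mathbb{F}_q^k)$ recalled in \Cref{thm:LSV}: vertices are the nontrivial proper subspaces of $\mathbb{F}_q^k$, and a $t$-face is a chain $W_0 \subsetneq W_1 \subsetneq \cdots \subsetneq W_t$.

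For the partite structure, I would first note that every vertex has a well-defined dimension in $\{1,\dots,k-1\}$, so the vertex set partitions as $V_1 \sqcup \cdots \sqcup V_{k-1}$. An edge of the $1$-skeleton encodes a strict containment, and two distinct subspaces of the same dimension cannot be so related; hence no edge lies inside any single $V_i$, and the graph is $(k-1)$-partite. Next, $|V_i|$ is by definition the number of $i$-dimensional subspaces of $\mathbb{F}_q^k$, which equals $\gbinom{k}{i}_q$ by the standard derivation (count ordered bases $(q^k-1)(q^k-q)\cdots(q^k-q^{i-1})$ and divide by the number of ordered bases of a fixed $i$-dimensional space).

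For the degrees, fix $i < j$. Given $W \in V_i$, its neighbors in $V_j$ are exactly the $j$-dimensional subspaces containing $W$; via the quotient map these are in bijection with the $(j-i)$-dimensional subspaces of $\mathbb{F}_q^k/W \cong \mathbb{F}_q^{k-i}$, giving left degree $\gbinom{k-i}{j-i}_q \sim q^{(j-i)(k-j)}$. Symmetrically, given $W' \in V_j$, its neighbors in $V_i$ are the $i$-dimensional subspaces of $W' \cong \mathbb{F}_q^j$, of which there are $\gbinom{j}{i}_q \sim q^{i(j-i)}$. The large-$q$ approximations use $\gbinom{m}{\ell}_q \sim q^{\ell(m-\ell)}$ recorded at the start of \Cref{sec:base-graph}. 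There is no genuine obstacle here: the entire content is bookkeeping with Gaussian binomial coefficients, and the only care needed is to distinguish the quotient picture (for the left degree) from the restriction picture (for the right degree).
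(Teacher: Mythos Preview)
Your argument is correct and is the standard verification one would give. The paper itself offers no proof for this statement: it is labeled an ``Observation'' and left unproved, with only a remark afterward illustrating the case $k=3$. Your counting via the quotient $\mathbb{F}_q^k/W \cong \mathbb{F}_q^{k-i}$ for the left degree and restriction to $W' \cong \mathbb{F}_q^j$ for the right degree is exactly the intended justification, and your asymptotics match the conventions recorded at the start of \Cref{sec:base-graph}. (Incidentally, the exponent $(j-i)(d-j)$ in the paper's statement is a typo for $(j-i)(k-j)$, which you correctly use.)
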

\begin{remark}
    For example, when $k=3$, we have $|V_1| = |V_2| = \gbinom{3}{1}_q = q^2+q+1$, and the bipartite graph between $V_1$ and $V_2$ has left and right degree $\gbinom{2}{1}_q = q+1$.
\end{remark}

\begin{lemma}[Corollary 7.1 of \cite{GHKLZ22}] \label{lem:link-second-eigenvalue}
    Let $1 \leq i < j \leq k-1$.
    The bipartite graph between $V_i$ and $V_j$ has second eigenvalue
    \begin{align*}
        \lambda_2 = \sqrt{q^{j-i} \gbinom{j-1}{j-i}_q \gbinom{k-i-1}{j-i}_q} \approx q^{(j-i)(k+i-j-1)/2} \mper
    \end{align*}
\end{lemma}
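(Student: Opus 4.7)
The plan is to reduce the eigenvalue computation to a spectral analysis in the Bose--Mesner algebra of the Grassmann scheme and then invoke Corollary 7.1 of \cite{GHKLZ22}. Since the graph is bipartite and biregular, its nontrivial singular values coincide (up to sign) with the nontrivial eigenvalues of $\lambda_2$, so it suffices to compute the second largest eigenvalue of $B_{ij} B_{ij}^\top$, where $B_{ij}$ is the biadjacency matrix between $V_i$ and $V_j$.

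First I would write $B_{ij} B_{ij}^\top$ explicitly: for $W, W' \in V_i$, the entry $(B_{ij} B_{ij}^\top)_{W,W'}$ counts the number of $j$-dimensional subspaces of $\bbF_q^k$ containing both $W$ and $W'$. Setting $d = \dim(W + W')$, this count equals $\gbinom{k-d}{j-d}_q$ when $d \le j$ and $0$ otherwise. Crucially, it depends only on $d$, so $B_{ij} B_{ij}^\top$ lies in the Bose--Mesner algebra of the Grassmann scheme $J_q(k, i)$, whose spectrum is classical and indexed by $r \in \{0, 1, \dots, i\}$ with eigenvalues given by $q$-Eberlein polynomials. The $r = 0$ eigenspace is the all-ones vector and contributes the expected top eigenvalue (the product of the two biregular degrees); the claim is that the $r = 1$ eigenspace gives the stated bound on $\lambda_2^2$, and all other eigenspaces contribute less. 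This is exactly the content of Corollary 7.1 of \cite{GHKLZ22}, so I would cite it directly to conclude $\lambda_2^2 = q^{j-i} \gbinom{j-1}{j-i}_q \gbinom{k-i-1}{j-i}_q$.

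For the approximate form, I would apply the standard estimate $\gbinom{a}{b}_q \sim q^{b(a-b)}$ to each factor: the exponent inside the square root becomes $(j-i) + (j-i)(i-1) + (j-i)(k-j-1) = (j-i)(k+i-j-1)$, yielding $q^{(j-i)(k+i-j-1)/2}$.

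The only genuine work lies in the Grassmann-scheme eigenvalue computation, which is already carried out in \cite{GHKLZ22}; the main obstacle, if one wished to be self-contained, would be to either invoke the $q$-analog of the Hahn/Eberlein orthogonal polynomial machinery, or equivalently to decompose the $GL_k(\bbF_q)$-permutation module on $V_i$ into irreducibles and read off the eigenvalue on the second isotypic component. Since the result is stated in the literature, citing it is the cleanest path and the rest is bookkeeping.
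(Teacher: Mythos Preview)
Your proposal is correct and aligns with the paper's treatment: the paper does not prove this lemma at all but simply imports it verbatim as Corollary~7.1 of \cite{GHKLZ22}, so your plan to cite that result directly is exactly what the paper does. The additional Grassmann-scheme explanation you give is sound background but is more than the paper itself supplies.
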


\subsection{Construction of base graph from Ramanujan complex: balanced case}
We first specify how to construct a structured bipartite graph, which is immediately useful in the ``balanced'' setting.
\begin{definition}[Construction of structured bipartite graph]  \label{def:base-graph-construction}
    We describe a procedure that takes in nonnegative integers $n_0$, $k$, and prime power $q$, and for $n = \Theta(n_0)$ and $D = |\bbP(\bbF_q^k)|$, constructs vertex sets $V,M$, along with a $(k,D)$-biregular bipartite graph $G$ on $(V,M)$.
    \begin{itemize}
        \item We find $e$ such that $n\coloneqq\abs*{\mathrm{PGL}_k\parens*{\F_{q^e}}}$ is $\Theta(n_0)$.
        Run the algorithm from \Cref{thm:LSV} to construct the group $\Gamma=\mathrm{PGL}_k\parens*{\F_{q^e}}$ along with generators $S=S_1\sqcup \cdots \sqcup S_{k-1}$, closed under inverses.
        \item Let $M = \Gamma$, and let $V$ be all the $(k-1)$-faces in the Cayley complex $\Cay_k(\Gamma,S)$, and define $G$ as the graph where we place an edge between $m\in M$ and $v\in V$ if $m\in v$.
        \item Identify elements in $[D]$ with $k$-sets of the form $\{\id, s_1,\dots,s_{k-1}\}$ where $s_i \in S_i$, and $s_as_b^{-1}\in S$ for all $a,b\in[k]$. 
        Here, $\id$ is the identity element of $\Gamma$.
        For each $m\in M$, we define $\Nbr_m$ as the function that maps $\{\id,s_1,\dots,s_{k-1}\}$ to $\{m,ms_1,\dots,ms_{k-1}\}$, which is a clique, and hence one of the right neighbors of $m$.
    \end{itemize}
\end{definition}

\noindent 
It remains to prove that the base graph constructed via the above procedure has useful properties.

\begin{lemma}   \label{lem:structured-bip-graphs}
    The graphs $G$ produced in \Cref{def:base-graph-construction} is a structured bipartite graph in the sense of \Cref{def:structured-bipartite-graph} with parameters $D = [k]_q!$ and $s_G(a,b) = \gbinom{k}{b-a}_q \in [q^{k-1}, O(q^{\floor{k^2/4}})]$ for $a < b \in [k]$.
\end{lemma}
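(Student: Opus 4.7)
The plan is to verify the four properties of \Cref{def:structured-bipartite-graph} in order, leveraging the structure of the Cayley clique complex from \Cref{thm:LSV}. I set $M_a \coloneqq V_a$ (using the parts $V_0,\dots,V_{k-1}$ of the Ramanujan complex) so that property 3 will fall out once I check that each $(k-1)$-face is a $k$-clique meeting every part in exactly one vertex; this is immediate since the generators $S_i$ create edges only from $V_a$ to $V_{a+i}$, so any $k$-clique in $\Cay_k(\Gamma,S)$ must hit each part exactly once. The same fact gives that every $v \in V$ has degree $k$ in $G$. Property 2 is just the labeling built into \Cref{def:base-graph-construction}.

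For the $M$-side degree, I would use the identification of the link of any $m \in M$ with the spherical building $\bbP(\bbF_q^k)$ (from \Cref{thm:LSV}). The $(k-1)$-faces of the complex containing $m$ correspond bijectively to the maximal faces of its link, i.e.\ to complete flags $W_1 \subset W_2 \subset \cdots \subset W_{k-1}$ in $\bbF_q^k$. Counting such flags by choosing $W_i/W_{i-1}$ as a line in $\bbF_q^k/W_{i-1}$ gives $[k]_q \cdot [k-1]_q \cdots [2]_q = [k]_q!$, so $D = [k]_q!$ as claimed.

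The main step is property 4, the special-set structure. Fix $a<b \in [k]$ and $m \in M_a$. The neighbors $m' \in M_b$ sharing a face with $m$ correspond, under the link identification, to the $(b-a)$-dimensional subspaces of $\bbF_q^k$, so there are exactly $\gbinom{k}{b-a}_q$ of them, and I index the special sets by this collection. For each such $m' \leftrightarrow W$, the common neighborhood $N(m) \cap N(m')$ consists of maximal flags through $W$, which split into a complete flag inside $W$ and a complete flag in $\bbF_q^k/W$; this count is $[b-a]_q! \cdot [k-(b-a)]_q! = D/\gbinom{k}{b-a}_q$. Pulling back via $\Nbr_m^{-1}$ gives a subset $A_W \subseteq [D]$, and the key point I need to argue is that $A_W$ depends only on $W$ and not on $m$: this follows because the link identification is equivariant under the left $\Gamma$-action, and the labeling in \Cref{def:base-graph-construction} uses the same left-translation by $m$, so flags through $W$ map to the same index set. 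This yields $s_G(a,b) = \gbinom{k}{b-a}_q$ with every special set having size \emph{exactly} $D/s_G(a,b)$, well within the allowed range $[D/(2s), 2D/s]$.

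The last task is the stated range $[q^{k-1}, O(q^{\floor{k^2/4}})]$ for $\gbinom{k}{j}_q$ with $1\le j \le k-1$. The lower bound follows from the elementary inequality $\frac{q^n-1}{q^m-1} \ge q^{n-m}$ (since $q^n - 1 \ge q^{n-m}(q^m-1)$), which telescopes to $\gbinom{k}{j}_q \ge q^{j(k-j)} \ge q^{k-1}$ because $j(k-j) \ge k-1$ for $1 \le j \le k-1$. The upper bound uses the unimodality of the Gaussian binomial (maximized at $j=\lfloor k/2\rfloor$) together with the standard asymptotic $\gbinom{k}{\lfloor k/2\rfloor}_q = O_k(q^{\lfloor k^2/4\rfloor})$. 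I do not anticipate any real obstacles: the only place any care is needed is confirming the $m$-independence of the special sets, but this is built into the left-equivariance of the link identification and the $\Gamma$-translation labeling in the construction.
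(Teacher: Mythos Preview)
Your proposal is correct and follows the natural verification route; the paper itself omits the proof entirely, stating only that the lemma ``can be proved by mechanically verifying that $G$ indeed satisfies all the claimed properties.'' Your plan supplies precisely those mechanics: using the partite structure of $\Cay_k(\Gamma,S)$ for properties~1 and~3, the flag count in the spherical building for $D=[k]_q!$, and the observation that for $u\in M_a$, $v\in M_b$ the pullback $\Nbr_u^{-1}(N(u)\cap N(v))$ is the set of face generators containing the fixed element $u^{-1}v\in S_{b-a}$, which depends only on that element and not on $u$ --- giving the $\gbinom{k}{b-a}_q$ special sets, each of exact size $D/\gbinom{k}{b-a}_q$.
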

\Cref{lem:structured-bip-graphs} can be proved by mechanically verifying that $G$ indeed satisfies all the claimed properties; we omit the details.

\begin{lemma}   \label{lem:triangle-bound-m}
    The graph $G$ produced in \Cref{def:base-graph-construction} is a $\tau$-small-set triangle expander for
    \[
        \tau = O_k(1) \cdot q^{\binom{k}{2} - \frac{k^2}{8} - \frac{1}{2}} \cdot \max_{0 \le i_0 < i_1 < i_2 < k} \min_{(i, j) \in \left\{ \substack{
            (i_1 - i_0, i_2 - i_0), \\
            (i_2 - i_1, k + i_0 - i_1), \\
            (k + i_0 - i_2, k + i_1 - i_2) 
        } \right\} } q^{\frac{1}{8}((i-j+2)^2 + (k-i-j)^2)} \mper
    \]
\end{lemma}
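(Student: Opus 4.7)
The plan is to exploit the $k$-partite structure of the Cayley clique complex: each $(k-1)$-face is a transversal containing exactly one vertex from every part $V_0,\dots,V_{k-1}$, so any face with $\geq 3$ vertices in $U$ contains a triangle with one vertex in each of $U\cap V_{i_0}$, $U\cap V_{i_1}$, $U\cap V_{i_2}$ for some $0\le i_0<i_1<i_2<k$. Union-bounding over the $\binom{k}{3}$ choices of triples,
\[
\#\{\text{faces with}\geq 3\text{ vertices in }U\}\ \le\ \sum_{i_0<i_1<i_2} T(i_0,i_1,i_2)\cdot E(i_0,i_1,i_2),
\]
where $T$ counts such triangles and $E$ counts the $(k-1)$-faces extending a fixed triangle. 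The extension factor $E(i_0,i_1,i_2)$ is the number of complete flags in the spherical building $\mathbb{P}(\mathbb{F}_q^k)$ (the link of any vertex of the triangle) refining a prescribed partial flag of dimensions $i_1-i_0$ and $i_2-i_0$; a standard count gives $E(i_0,i_1,i_2)=[i_1-i_0]_q!\cdot[i_2-i_1]_q!\cdot[k-i_2+i_0]_q!$, which is bounded by $O_k(1)\cdot q^{\binom{k}{2}-\binom{k}{2}+\text{gaps}}$ and matches the leading $q^{\binom{k}{2}}$ factor after accounting for the triangle count.

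To bound $T(i_0,i_1,i_2)$ I would pivot on one of the three parts. Pivoting at $V_{i_0}$: for each $v\in V_{i_0}\cap U$, triangles through $v$ are exactly edges of the induced bipartite graph on $(N(v)\cap V_{i_1}\cap U,\ N(v)\cap V_{i_2}\cap U)$ inside the link of $v$. By \Cref{lem:link-second-eigenvalue}, the ambient bipartite graph in the link between dimension-$i$ and dimension-$j$ subspaces, with $(i,j)=(i_1-i_0,i_2-i_0)$, has second eigenvalue $\Theta(q^{(j-i)(k+i-j-1)/2})$. I would apply the expander mixing lemma in the link, then use Cauchy--Schwarz to aggregate the resulting $\sqrt{|N(v)\cap V_{i_1}\cap U|\cdot|N(v)\cap V_{i_2}\cap U|}$ into $\sqrt{e(V_{i_0}\cap U,V_{i_1}\cap U)\cdot e(V_{i_0}\cap U,V_{i_2}\cap U)}$, and finally bound these global edge counts using the global bipartite spectral bound of \Cref{lem:bipartite-spectral-bound} (only the $\lambda_2\sqrt{|X||Y|}$ term survives in the small-set regime). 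The other two pivots (at $V_{i_1}$ and $V_{i_2}$) give the other two choices of $(i,j)$ listed in the lemma, reflecting cyclic distances modulo $k$; taking the minimum is valid because all three bounds hold simultaneously.

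Combining the extension count with the pivoted triangle count and taking the worst triple $(i_0,i_1,i_2)$ yields a bound of the form $O_k(1)\cdot q^{\alpha}\cdot|U|$ for a suitable exponent $\alpha$. Working through the arithmetic, the sum of the link spectral exponent, the global spectral exponents, the extension exponent, and the $-|U|^{1/2}$-type Cauchy--Schwarz savings produces the claimed exponent $\binom{k}{2}-\frac{k^2}{8}-\frac{1}{2}+\frac{1}{8}((i-j+2)^{2}+(k-i-j)^{2})$, after noticing that the quantity $(j-i)(k+i-j-1)/2+\text{extension gaps}-(\text{savings})$ factors into this symmetric square form.

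The main obstacle is the exponent bookkeeping. The link spectral estimate only dominates when the restricted sets $N(v)\cap V_{i_a}\cap U$ are small relative to $|V_{i_a}|_{\text{link}}\sim q^{(i_a-i_0)(k-i_a+i_0)}$; verifying this in the hypothesized regime $|U|\le\delta n$ (with $\delta$ matching the factor $q^{-25/4}$ in the informal statement) requires a careful degree-counting step, possibly with a further dyadic decomposition of $U$ by link-degree. A subtler point is that the three pivot-based bounds should be applied to different components of $U$ depending on how $U$ distributes among the parts, so one must either prove each pivot bound uniformly (accepting the $\min$) or do a case analysis; the symmetric form of the final bound reflects that the min-over-pivots is the right aggregation. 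Finally, matching the exact form of the exponent requires identifying the optimal way to Cauchy--Schwarz --- e.g., whether to pair $|S_1(v)||S_2(v)|$ or $|S_1(v)|^2$ and $|S_2(v)|^2$ --- which determines the precise square-of-linear-form appearing in $(i-j+2)^{2}+(k-i-j)^{2}$.
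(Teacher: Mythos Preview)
Your overall architecture matches the paper exactly: union-bound over triples $(i_0,i_1,i_2)$, pivot at one part, count triangles via the link expander mixing lemma (\Cref{lem:link-edge-density}), then multiply by the flag-extension factor $[i_1-i_0]_q!\,[i_2-i_1]_q!\,[k+i_0-i_2]_q!$ (this is \Cref{claim:k-faces-per-triangle}). The three pivots giving the three $(i,j)$ options in the $\min$ is also correct and is exactly what the paper does via rotational symmetry.

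The one substantive divergence is the aggregation step. The paper does \emph{not} use Cauchy--Schwarz; it uses a two-parameter dyadic decomposition of the pivot set $U_0$ into buckets $U_{0,\alpha,\beta}=\{u:|U_i(u)|\in[2^{\alpha-1},2^\alpha),\ |U_j(u)|\in[2^{\beta-1},2^\beta)\}$, bounds $|U_{0,\alpha,\beta}|$ via the global bipartite spectral bound (\Cref{lem:U-ia-density}), and sums the link-EML estimate over the grid of $(\alpha,\beta)$. Your Cauchy--Schwarz idea handles the spectral term of the link EML cleanly: $\sum_u\sqrt{|U_i(u)||U_j(u)|}\le\sqrt{e(U_0,U_i)\,e(U_0,U_j)}$ together with the global bound recovers exactly the paper's dominant term $q^{(j-i)(k-(j-i)-1)/2}\cdot q^{i(k-i)/4}\cdot q^{j(k-j)/4}\cdot|U|$. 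However, the \emph{density} term $\sum_u |U_i(u)||U_j(u)|/q^{i(k-j)}$ does not succumb to Cauchy--Schwarz plus a trivial degree bound: for $k=3$ this yields $q^2|U|$ rather than the required $q^{3/2}|U|$. The paper handles this term precisely through the dyadic split (and then shows it is dominated by the spectral term, see the comparison~\eqref{eq:eml-compare-terms}). So the ``further dyadic decomposition'' you flag as a possible necessity is in fact mandatory, not optional, and is the main technical device in the paper's proof.
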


We defer the proof of \Cref{lem:triangle-bound-m} to \Cref{sec:triangles}.

\begin{lemma}   \label{lem:skeleton-expansion}
    The graph $G$ produced in \Cref{def:base-graph-construction} is a $\lambda$-small-set skeleton expander for
    \[
        \lambda \coloneqq O_k(1) \cdot q^{\frac{1}{2}\floor{\frac{k^2}{4}}}\mper
    \]
\end{lemma}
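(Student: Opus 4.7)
The plan is to identify $\wt{G}$ with the Cayley graph $\Cay(\Gamma, S)$ where $S = S_1 \sqcup \cdots \sqcup S_{k-1}$, establish a sharp bound on the nontrivial part of its spectrum using \Cref{thm:LSV}, and then invoke the standard restricted-top-eigenvalue argument: any unit vector supported on a small subset $U \subseteq M$ has a small projection onto the trivial eigenspace, so its Rayleigh quotient against $A_{\wt{G}}$ is controlled by the nontrivial spectral bound.

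First, I would observe that a length-$2$ walk $(u, f, v)$ in $G$ from \Cref{def:base-graph-construction} corresponds to two distinct vertices $u, v$ lying in the $(k-1)$-face $f$, which by the clique-complex construction is equivalent to $u^{-1}v \in S$. Hence $\wt{G} = \Cay(\Gamma, S)$, which is undirected (since $S = S^{-1}$) and of degree $|S| = \sum_{i=1}^{k-1}\gbinom{k}{i}_q = \Theta_k\parens*{q^{\floor{k^2/4}}}$, with adjacency matrix $A_{\wt{G}} = \sum_{i=1}^{k-1} A_i$ for the generator matrices $A_i$ of \Cref{thm:LSV}. The $k$-dimensional subspace $T$ spanned by the orthogonal part-indicators $\one_{V_0},\ldots,\one_{V_{k-1}}$ is invariant under each $A_i$ (indeed $A_i \one_{V_a} = \gbinom{k}{i}_q \one_{V_{a+i}}$), hence invariant under $A_{\wt{G}}$. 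On $T$, $A_{\wt{G}}$ has operator norm at most $|S|$ (trivially, since $\wt{G}$ is $|S|$-regular), while on $T^\perp$ the nontrivial eigenvalue bound of \Cref{thm:LSV} together with the triangle inequality gives
\[
\norm*{A_{\wt{G}} x} \le \sum_{i=1}^{k-1} \binom{k}{i} \sqrt{q^{i(k-i)}} \cdot \norm*{x} \le O_k(1) \cdot q^{\frac{1}{2}\floor{k^2/4}} \cdot \norm*{x}, \qquad x \in T^\perp,
\]
with the dominant term arising at $i = \floor{k/2}$.

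Next, I would bound the Rayleigh quotient of $A_{\wt{G}}$ on vectors supported on $U \subseteq M$ with $|U| \le \eta |M|$. For any unit $x$ supported on $U$, decompose $x = x_T + x_{T^\perp}$ orthogonally; since all parts have size $|M|/k$, Cauchy--Schwarz gives
\[
\norm*{x_T}^2 = \sum_{j=0}^{k-1} \frac{\angles*{x, \one_{V_j}}^2}{|V_j|} \le \sum_{j=0}^{k-1} \frac{|V_j \cap U|}{|V_j|} \norm*{x_{V_j}}^2 \le \frac{k |U|}{|M|} \le k\eta.
\]
Since $T$ and $T^\perp$ are $A_{\wt{G}}$-invariant, it follows that
\[
x^\top A_{\wt{G}} x \le |S| \cdot \norm*{x_T}^2 + O_k(1) \cdot q^{\frac{1}{2}\floor{k^2/4}} \cdot \norm*{x_{T^\perp}}^2 \le k \eta |S| + O_k(1) \cdot q^{\frac{1}{2}\floor{k^2/4}}.
\]
Choosing $\eta \le c_k \cdot q^{-\floor{k^2/4}}$ for a small enough constant $c_k$ (which is allowed, since \Cref{def:skeleton-expansion} permits $\eta$ to depend on $k$ and $D$) makes the first summand subdominant, yielding $\lambda_{\max}(\wt{G}[U]) \le O_k(1) \cdot q^{\frac{1}{2}\floor{k^2/4}}$ as claimed.

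The main obstacle worth flagging is that the trivial eigenspace of the commuting family $(A_i)_{i=1}^{k-1}$ has dimension $k$ rather than $1$: the $k$ part-indicators form an invariant subspace that each $A_i$ permutes cyclically (up to the scalar $\gbinom{k}{i}_q$), so $A_{\wt{G}}$ does not have a simple Perron eigenvector. This rules out the one-line argument of projecting onto a single all-ones direction, but is handled cleanly by the per-part Cauchy--Schwarz step above, using crucially that the parts are equinumerous. All remaining steps reduce to the Ramanujan eigenvalue bound from \Cref{thm:LSV} together with the approximations for Gaussian binomial coefficients stated at the beginning of this section.
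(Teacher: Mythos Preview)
Your proof is correct and follows essentially the same route as the paper: identify $\wt{G}$ with the $1$-skeleton $\Cay(\Gamma,S)$, invoke the Ramanujan eigenvalue bound on the nontrivial spectrum, and bound the Rayleigh quotient of any vector supported on a small set by controlling its projection onto the trivial eigenspace. The paper packages the last step as \Cref{lem:small-set-specrad} (together with \Cref{lem:bipartite-spectral-bound}), whereas you inline the argument and, importantly, adapt it to the $k$-dimensional trivial subspace $T=\mathrm{span}(\one_{V_0},\dots,\one_{V_{k-1}})$ via the per-part Cauchy--Schwarz step. This adaptation is exactly the missing detail behind the paper's terse citation: \Cref{lem:small-set-specrad} as stated removes only the single all-ones direction, so applying it verbatim to $\wt{G}$ would pick up the other trivial eigenvalues on $T$ (which for $k=5$ include one of order $q^{6}\gg q^{3}$); the intended fix---either decomposing into the $\binom{k}{2}$ bipartite pieces and applying \Cref{lem:small-set-specrad} to each, or working with the full $k$-dimensional trivial space as you do---is precisely what your argument supplies.
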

\Cref{lem:skeleton-expansion} follows from \Cref{lem:bipartite-spectral-bound} along with \Cref{lem:small-set-specrad} below.
\begin{lemma}   \label{lem:small-set-specrad}
    Let $G$ be an $n$-vertex $d$-regular graph with largest nontrivial eigenvalue $\lambda$.
    Let $H$ be any subgraph of $G$ with edges incident to at most $\eps n$ vertices.
    The largest eigenvalue of $H$ is at most $\lambda + \eps d$.
\end{lemma}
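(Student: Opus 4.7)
The plan is to run a standard Rayleigh-quotient argument that splits the top eigenvector of $A_H$ into its component along the all-ones direction (which is the trivial eigenspace of $A_G$) and the orthogonal part (controlled by $\lambda$), exploiting the fact that the eigenvector is concentrated on the small support $T$ of $H$.

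First I would let $T \subseteq V(G)$ be the set of at most $\eps n$ vertices touched by edges of $H$, and let $v$ be a top eigenvector of $A_H$. By Perron--Frobenius applied to the nonnegative symmetric matrix $A_H$, we may take $v \ge 0$. Since all edges of $H$ lie inside $T$, zeroing out $v$ outside $T$ does not decrease the Rayleigh quotient, so we may further assume $\mathrm{supp}(v)\subseteq T$ and $\|v\|_2=1$. Because $A_G - A_H$ is entrywise nonnegative and $v\ge 0$, we get
\[
\lambda_1(H) \;=\; v^\top A_H v \;\le\; v^\top A_G v.
\]

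Next I would decompose $v$ against the trivial eigenspace of $A_G$. Write $v = \alpha\,\mathbf{1}/\sqrt{n} + w$, where $w\perp\mathbf{1}$ and $\alpha^2 + \|w\|_2^2 = 1$. Since $G$ is $d$-regular, $\mathbf{1}/\sqrt{n}$ is an eigenvector of $A_G$ with eigenvalue $d$, and on the orthogonal complement the Rayleigh quotient of $A_G$ is bounded by $\lambda$. Hence
\[
v^\top A_G v \;=\; \alpha^2 d + w^\top A_G w \;\le\; \alpha^2 d + \lambda\,\|w\|_2^2 \;\le\; \alpha^2 d + \lambda.
\]
The remaining step is to bound $\alpha$ using that $v$ is supported on $T$: by Cauchy--Schwarz,
\[
\alpha \;=\; \bigl\langle v,\,\mathbf{1}/\sqrt{n}\bigr\rangle \;=\; \frac{1}{\sqrt{n}}\sum_{i\in T} v_i \;\le\; \frac{1}{\sqrt{n}}\sqrt{|T|}\,\|v\|_2 \;\le\; \sqrt{\eps}.
\]
Plugging $\alpha^2\le\eps$ into the previous display yields $\lambda_1(H)\le \lambda + \eps d$, as desired.

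There is no real obstacle here; the only mild subtlety is justifying that the Perron eigenvector of $A_H$ can be taken nonnegative and supported on $T$, which is why I keep $A_H$ (rather than some signed comparison) in the chain $v^\top A_H v \le v^\top A_G v$. The argument works verbatim for the $\lambda$-small-set skeleton expansion application, where $T$ plays the role of the set $U\subseteq M$ of size at most $\eta|M|$ and $G$ is the $1$-skeleton of the Ramanujan complex restricted to a fixed pair of parts.
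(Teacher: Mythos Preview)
Your proof is correct and follows essentially the same approach as the paper's: take a nonnegative Perron eigenvector of $A_H$ supported on the small set, bound $v^\top A_H v \le v^\top A_G v$ via entrywise nonnegativity of $A_G - A_H$, then split off the all-ones component and control it by Cauchy--Schwarz against $\mathbf{1}_T$. The only cosmetic difference is that the paper writes the split as $v^\top(A_G - \tfrac{d}{n}\mathbf{1}\mathbf{1}^\top)v + \tfrac{d}{n}\langle v,\mathbf{1}\rangle^2$ rather than via an explicit orthogonal decomposition, and it notes directly that eigenvector entries on isolated vertices vanish rather than arguing via zeroing out.
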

\begin{proof}
    Let $v$ be the top eigenvector of $A_H$.
    Since $A_H$ is a nonnegative matrix, by the Perron--Frobenius theorem, all entries of $v$ are nonnegative.
    Further, the entries of $v$ are $0$ on isolated vertices in $H$, which ensures that $v$ is supported on at most $\eps n$ vertices.
    We have:
    \begin{equation*}
        \lambda_{\max}(A_H) = v^{\top} A_H v \le v^{\top} A_G v = v^{\top} \parens*{A_G - \frac{d}{n}11^{\top}}v + \frac{d}{n}\angles*{v, 1}^2 \le \lambda + \eps d\mcom
    \end{equation*}
    where the second step uses nonnegativity of $v$ and $A_G-A_H$, and the final step uses Cauchy--Schwarz on $v$ and $1_{\supp(v)}$.
\end{proof}

\subsection{Imbalanced case}
In the sequel, $\Gamma$ and $S = S_1\sqcup \dots \sqcup S_{k-1}$ be as in \Cref{def:base-graph-construction}.
\begin{definition}[Face generator]
    We call $\sigma = \{s_0 = \id, s_1,\dots, s_{k-1}\}$ a \emph{face generator} if $s_i\in S_i$, and for every $i,j\in[k]$, $s_i^{-1} s_j$ is in $S$.
    Given a collection $F$ of face generators, we use $\TruncCay(\Gamma,F)$ to denote the complex obtained by including every face of the form $m\sigma$ for $m\in\Gamma$ and $\sigma\in F$.
\end{definition}

\begin{observation}
    Observe that for any collection $F$ of face generators, the set of $(k-1)$-faces in the complex $\TruncCay(\Gamma,F)$ is a subset of the set of $(k-1)$-faces in $\Cay_k(\Gamma,S)$.
\end{observation}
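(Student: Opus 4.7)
The plan is entirely definitional: I would unfold the definition of $\TruncCay(\Gamma, F)$ and verify that the face condition of $\Cay_k(\Gamma, S)$ is satisfied for every face produced. First, I would observe that by the construction of $\TruncCay(\Gamma, F)$, every $(k-1)$-face of it has the form $m\sigma = \{m, ms_1, \dots, ms_{k-1}\}$ for some $m \in \Gamma$ and some face generator $\sigma = \{\mathrm{id}, s_1, \dots, s_{k-1}\} \in F$. So it suffices to show that each such $m\sigma$ satisfies the clique condition defining a face of $\Cay_k(\Gamma, S)$, namely that the ratio of any two of its elements lies in $S$.

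Next, I would do the one-line calculation: for any $a, b \in [k]$, the relevant ratio is $(ms_a)^{-1}(ms_b) = s_a^{-1} s_b$, which lies in $S$ by the defining property of a face generator. Equivalently, if one reads the Cayley face condition in the form $u_a u_b^{-1} \in S$ as written, then $(ms_a)(ms_b)^{-1} = m(s_a s_b^{-1})m^{-1}$ belongs to $S$ because the LSV generating set $S = S_1 \sqcup \cdots \sqcup S_{k-1}$ is preserved under conjugation by $\Gamma$ (and symmetry $S = S^{-1}$ bridges the forms $s_a^{-1} s_b$ and $s_a s_b^{-1}$). In either convention, $m\sigma$ is a $(k-1)$-face in $\Cay_k(\Gamma, S)$, proving containment.

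Conceptually, the statement is just recording the fact that left-multiplication by $m$ acts as an automorphism of the Cayley complex sending faces to faces, so restricting to left-translates of a fixed family of face-patterns $F$ can only produce a subfamily of the full set of $(k-1)$-faces. I do not expect any substantive obstacle; the only bookkeeping point is reconciling the two apparent conventions (``$u_a u_b^{-1}$'' in the Cayley definition versus ``$s_i^{-1} s_j$'' in the face-generator definition), which is handled either by invoking symmetry/conjugation invariance of $S$ or by interpreting the Cayley convention as the left-invariant version $u_a^{-1} u_b \in S$.
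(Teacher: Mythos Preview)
Your proposal is correct and is exactly the natural definitional check; the paper states this as an observation without proof, so there is nothing to compare against. Your attention to the discrepancy between the Cayley-complex convention $u_a u_b^{-1}\in S$ and the face-generator convention $s_i^{-1}s_j\in S$ is apt: with the left-invariant reading $u_a^{-1}u_b\in S$ the verification is immediate since $(ms_a)^{-1}(ms_b)=s_a^{-1}s_b\in S$, which is evidently the intended interpretation given how the paper uses $m\sigma$ elsewhere (e.g., in the definition of $\Nbr_m$).
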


\begin{definition}[Equivalence relation on face generators] \label{def:equivalence-relation}
    We say face generators $\sigma_1$ and $\sigma_2$ are \emph{equivalent} if
    for some $s\in S$, we have $\sigma_1 = s^{-1} \sigma_2$.
\end{definition}

\begin{observation}
    Any equivalence class can have at most $k$ elements.
    This is because $\id$ is contained in both $\sigma_1$ and $\sigma_2$, and thus, for $\sigma_1 = s^{-1} \sigma_2$ to contain $\id$, $s$ must be in $\sigma_2$, for which there are only $k$ choices.
\end{observation}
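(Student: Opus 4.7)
The plan is a short counting argument. I fix an equivalence class and pick a representative face generator $\sigma_2 = \{\id, s_1, \ldots, s_{k-1}\}$, and I will bound the number of face generators $\sigma_1$ in the same class. Unfolding the equivalence relation of \Cref{def:equivalence-relation} (iterating if needed to take the transitive closure), any such $\sigma_1$ can be written as $\sigma_1 = g^{-1}\sigma_2$ for some element $g \in \Gamma$. The crucial observation is that $\sigma_1$ is itself a face generator, so $\id \in \sigma_1 = g^{-1}\sigma_2$. This forces $g \in \sigma_2$. Since $|\sigma_2| = k$, there are at most $k$ admissible choices of $g$, and each determines $\sigma_1$ uniquely via $\sigma_1 = g^{-1}\sigma_2$. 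Thus the equivalence class has at most $k$ elements.

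I do not expect real obstacles here; the argument is essentially pigeonhole applied to the elements of $\sigma_2$. The only subtlety worth flagging is the transitive closure: a chain of single-step equivalences $\sigma_1 = s_m^{-1}\cdots s_1^{-1}\sigma_2$ composes into $\sigma_1 = g^{-1}\sigma_2$ for $g = s_1 \cdots s_m \in \Gamma$, so the ``differ by left multiplication by $g^{-1}$'' framing survives, and the identity-containment constraint still pins $g$ to one of the $k$ elements of $\sigma_2$. Hence the bound $k$ is unchanged.
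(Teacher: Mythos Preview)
Your proposal is correct and matches the paper's own inline argument essentially verbatim: fix $\sigma_2$, note that any equivalent $\sigma_1$ must contain $\id$, hence the translating element lies in $\sigma_2$, giving at most $k$ choices. Your extra care with the transitive closure is a nice touch (and in fact one can check that the composite $g$ is forced back into $\sigma_2 \subseteq \{\id\}\cup S$, so single-step and multi-step equivalence coincide), but the core idea is identical to the paper's.
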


\begin{lemma}   \label{lem:basically-arbitrary-degree}
    There exists an integer $j\in[1,k]$ such that for any prime power $q$, and integer $D < \frac{1}{k}|\bbP(\bbF_q^k)| = \Theta\left(q^{k\choose 2}\right)$ that is divisible by $j$, there is a set $F$ of face generators of size exactly $D$ such that the bipartite vertex-face incidence graph $G$ of $\TruncCay(\Gamma,F)$ is a structured $(k,D)$-biregular graph, and further, $G$ is a $\tau$-small-set triangle expander and a $\lambda$-small-set skeleton expander for $\tau$ as in \Cref{lem:triangle-bound-m} and for $\lambda$ as in \Cref{lem:skeleton-expansion}.
\end{lemma}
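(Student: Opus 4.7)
My proposal is to construct $F$ as a union of complete equivalence classes of face generators. This automatically yields biregularity and inherits the $k$-partite structure, reducing the work to (a) controlling the sizes of the ``special sets'' in Property~4 of Definition~\ref{def:structured-bipartite-graph}, and (b) transferring the triangle and skeleton expansion of Lemmas~\ref{lem:triangle-bound-m} and \ref{lem:skeleton-expansion}, which I expect will go through via subgraph inheritance.

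\textbf{Main steps.} First, I would show that in the $k$-partite complex $\Cay_k(\Gamma, S)$ the equivalence class of a face generator $\sigma = \{\mathrm{id}, v_1, \ldots, v_{k-1}\}$ (with $v_i \in V_i$) equals $\{v_i^{-1}\sigma : i \in [k]\}$ and has size exactly $k$ generically: a collision $v_i^{-1}\sigma = v_l^{-1}\sigma$ forces $v_l v_i^{-1}$ to stabilize $\sigma$ under left multiplication, which forces $\sigma$ to be a coset of a cyclic subgroup of $\mathrm{PGL}_k(\bbF_{q^e})$, a non-generic situation. So I take $j = k$. Second, for any $F$ that is a union of $D/k$ size-$k$ classes, Properties 1--3 of Definition~\ref{def:structured-bipartite-graph} follow immediately: each $m \in \Gamma$ lies in exactly $|F| = D$ distinct faces $m\sigma$ for $\sigma \in F$ (so the induced ordering gives $\Nbr_m$ and the left-degree is $D$); each face contains one vertex per part $V_a$; and the $M_a$-partition is inherited. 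Third, for Property 4, fix $a < b$ and set $s = \gbinom{k}{b-a}_q$. In the full complex the special sets are indexed by vertices $w$ of $V_{b-a}$ in the link, each of size $[k]_q!/s$ by $\Gamma$-transitivity. I would sample $D/k$ size-$k$ classes uniformly at random to form $F$, and apply a Bernstein-type concentration bound: each class contributes value at most $k$ to a given special set $A_w$, with mean $k/s$ and total variance $O(Dk/s)$ over the $D/k$ samples, so the standard deviation $O(\sqrt{Dk/s})$ is a lower-order fraction of the mean $D/s$ whenever $D/s \gg k \log s$. This holds in the target regime $D = \Theta(q^{\binom{k}{2}})$, $s \le O(q^{\lfloor k^2/4\rfloor})$, since then $D/s \gtrsim q^{k(k-2)/4}$ dominates $\log q$. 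Union bounding over the $s$ choices of $w$ and the $O(k^2)$ pairs $(a,b)$, a random $F$ places every special-set size in $[D/(2s), 2D/s]$ with high probability, so a valid $F$ exists by the probabilistic method. Fourth, the expansion bounds transfer by subgraph inheritance: every $(k-1)$-face of $\TruncCay(\Gamma, F)$ is also a face of $\Cay_k(\Gamma, S)$, so the triangle bound $\tau|U|$ of Lemma~\ref{lem:triangle-bound-m} applies verbatim; and the skeleton of $\TruncCay(\Gamma, F)$ is a subgraph of the full skeleton, so the induced-subgraph top eigenvalue on any small $U$ is dominated by that of the full complex (by the Perron--Frobenius monotonicity of Lemma~\ref{lem:small-set-specrad}), yielding the $\lambda$ bound via Lemma~\ref{lem:skeleton-expansion}.

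\textbf{Main obstacle.} The most delicate step is Property~4. The Bernstein argument requires checking that each class contributes at most $k$, and generically 0 or 1, to any single special set $A_w$; this needs unpacking the structure of complete flags through both $\mathrm{id}$ and $w$ in the link $\bbP(\bbF_q^k)$. The concentration also degrades for small $D$ (close to the lower end of the admissible range), where $D/s$ may be sub-logarithmic. A cleaner deterministic alternative --- which likely motivates the ``$j \in [1,k]$'' flexibility in the statement --- would be to choose $F$ equivariantly under a subgroup of automorphisms of $\Cay_k(\Gamma,S)$ (for instance arising from a Frobenius action) so that every truncated $|A_w \cap F|$ is forced to be \emph{exactly} $|F|/s$ by symmetry, at the cost of tying $j$ to the relevant orbit size.
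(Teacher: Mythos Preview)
Your core strategy --- take $F$ to be a union of complete equivalence classes, deduce biregularity from that closure, and inherit the expansion bounds as a subgraph of the full incidence graph --- is exactly the paper's. Two differences are worth flagging.

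First, the paper does not try to pin down the equivalence-class size. It simply pigeonholes: classes have size at most $k$, so some fixed $j \in [1,k]$ accounts for at least $\frac{1}{k}|\calF(S)|$ of all face generators, and $F$ is assembled from $D/j$ classes of that size. This is precisely what the unspecified $j$ in the statement is for, and it sidesteps your genericity argument about collisions $v_i^{-1}\sigma = v_l^{-1}\sigma$ entirely. Separately, you should make the biregularity step explicit rather than saying it ``follows immediately'': the reason the $M$-degree is exactly $D$ (and not larger) is that any face $m'\sigma'$ containing $m$ has $m^{-1}m' \in S$ and $(m^{-1}m')\sigma' \ni \id$, so $m^{-1}m'\sigma'$ is a face generator equivalent to $\sigma'$ and hence already in $F$; thus every face through $m$ is uniquely $m\sigma$ for some $\sigma \in F$. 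The paper spells this argument out in full.

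Second, on Property~4: your instinct that the special-set sizes $|A_w|$ require an argument is sound --- an \emph{arbitrary} union of classes does not obviously balance them across $w \in S_{b-a}$ --- and your Bernstein-plus-union-bound over a random choice of $D/j$ classes is a clean way to force $|A_w| \in [D/(2s), 2D/s]$. The paper, by contrast, simply declares the remaining structured-bipartite-graph verification ``straightforward'' and omits it. So on this point your proposal is, if anything, more complete than the published proof.
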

\begin{proof}
    Define $\calF(S)$ as the collection of all face generators on $S$.
    Observe that $\calF(S)$ is in one-to-one correspondence with faces incident to any fixed $m\in\Gamma$, and hence $|\calF(S)| = |\bbP(\bbF_q^k)|$.
    We construct $F$ be starting with $\calF(S)$ and deleting a subcollection of generators.
    First, partition $\calF(S)$ into equivalence classes based on the equivalence relation in \Cref{def:equivalence-relation}.
    For some $1\le j\le k$, at least $\frac{1}{k}|\calF(S)|$ many face generators belong to an equivalence class of size $j$.
    We construct $F$ of size $D < \frac{1}{k}|\calF(S)|$ by choosing all the face generators from $D/j$ arbitrary equivalence classes of size $j$.

    We first prove that $G$ is $(k,D)$-regular: observe that $G$ is $k$-left-regular by definition.
    To see that it is $D$-right-regular, first observe that for any $m\in \Gamma$, $m$ is contained in $D$ faces of the form $m\sigma$ for $\sigma\in F$.
    We claim that there if a face $f$ that contains $m$, then it must be of the form $m\sigma$ for some $\sigma\in F$.
    Observe that $f$ must be of the form $m'\sigma'$ for some $m'\in\Gamma$ and face generator $\sigma'$ in an equivalence class of size $j$ to be included in the complex.
    Since $m'\sigma'$ contains $m$, $m'^{-1}m$ is in $\sigma'$, and hence in $S$.
    Since $S$ is closed under inverses, $m^{-1}m'$ is also in $S$.
    Now, we choose $\sigma = m^{-1}m'\sigma'$.
    We see that $\sigma$ is a valid face generator since it contains $\id$, and for any $a,b\in\sigma$, we have
    $$
        a^{-1}b = a'^{-1} m'^{-1} m m^{-1} m' b' = a'^{-1} b'\in S.
    $$
    Additionally, since $m^{-1}m$ is in $S$, $\sigma$ and $\sigma'$ belong to the same equivalence class, which implies that $\sigma$ must be in $F$, since $\sigma'$ is in $F$.

    Finally, the claim that $G$ is a structured bipartite graph in the sense of \Cref{def:structured-bipartite-graph} is a straightforward verification that we omit, and the small-set triangle and skeleton expansion properties follow from the fact that $G$ is a subgraph of the graph constructed in \Cref{def:base-graph-construction} combined with \Cref{lem:triangle-bound-m,lem:skeleton-expansion}.
\end{proof}
\section{Bounding triangles}
\label{sec:triangles}

In this section, we prove \Cref{lem:triangle-bound-m}.
Fix a prime power $q$ and integers $k, e \ge 2$. Let $X$ denote the resulting Ramanujan clique complex as constructed in \Cref{thm:LSV}. Recall that we denote the vertex set of $X$ by $X(0) = V_0 \cup V_1 \cup \dots \cup V_{k-1}$, where $V_i$ is the vertices in the $i$'th part. All $V_i$'s have the same size, which we denote in this section by $n$. Let $d_{ij}$ denote the degree of a left vertex in the bipartite graph $(V_i, V_j)$ induced by the $1$-skeleton of $X$. Recall that $d_{ij} = \gbinom{k}{(j-i)_k}_q \approx q^{(j-i)_k(k-(j-i)_k)}$, where in general we use $a_k \in \{ 0, 1, \dots, k-1 \}$ to denote the number $a$ reduced modulo $k$.

Our goal in this section is to upper bound the number of triangles contained in any small set $U \subseteq X(0)$. Our bounds follow from spectral bounds on edge density in the Ramanujan graph and in its links.
The proof strategy loosely resembles the one taken in the proof of \cite[Theorem 10.14]{DH24}.

\subsection{Edge density of small sets in the Ramanujan complex}

We will first prove some statements related to the edge density in small sets.

\begin{lemma} \label{lem:U-ia-density}
    Let $\delta < \frac{1}{q^{k^2/4}}$ and $\alpha \in \bbN$. Let $U_{i,\alpha} \subseteq V_i$ and $U_j \subseteq V_j$ be such that $|U_{i,\alpha}| \le \delta n$ and $|U_j| \le \delta n$. Suppose also that the number of neighbors each vertex $u \in U_{i,\alpha}$ has in $U_j$ is in $[2^{\alpha-1}, 2^\alpha)$. 
    Then,
    \[
        |U_{i,\alpha}| \le O_k(1) \cdot q^{(j-i)_k (k-(j-i)_k)} \cdot 2^{-2\alpha} \cdot |U_j|.
    \]
\end{lemma}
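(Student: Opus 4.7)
The plan is to apply the expander mixing lemma to the bipartite graph between $V_i$ and $V_j$ induced by the $1$-skeleton of $X$. By \Cref{lem:bipartite-spectral-bound}, this graph is $d_{ij}$-biregular with nontrivial second eigenvalue at most $\lambda_2 \le O_k(1) \cdot q^{(j-i)_k(k-(j-i)_k)/2} = O_k(\sqrt{d_{ij}})$, i.e., it is near-Ramanujan up to constants depending only on $k$. Write $t = (j-i)_k$ for brevity, so $d_{ij} = \gbinom{k}{t}_q$ and $\lambda_2^2 = O_k(q^{t(k-t)})$.

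The hypothesis that every vertex of $U_{i,\alpha}$ has at least $2^{\alpha-1}$ neighbors in $U_j$ gives the lower bound $e(U_{i,\alpha},U_j) \ge 2^{\alpha-1}|U_{i,\alpha}|$. The expander mixing lemma provides the matching upper bound
\[
e(U_{i,\alpha},U_j) \;\le\; \frac{d_{ij}|U_{i,\alpha}||U_j|}{n} + \lambda_2 \sqrt{|U_{i,\alpha}||U_j|}.
\]
I would then split into two cases based on which of the two terms on the right dominates, in each case absorbing a factor of $2$ into constants.

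In the case where the spectral term $\lambda_2\sqrt{|U_{i,\alpha}||U_j|}$ dominates, rearranging $2^{\alpha-1}|U_{i,\alpha}| \le 2\lambda_2\sqrt{|U_{i,\alpha}||U_j|}$ directly yields $|U_{i,\alpha}| \le O_k(1)\cdot \lambda_2^2 \cdot 2^{-2\alpha}|U_j| = O_k(1)\cdot q^{t(k-t)} 2^{-2\alpha}|U_j|$, which is the conclusion. In the case where the "average" term $d_{ij}|U_{i,\alpha}||U_j|/n$ dominates, the comparison gives $2^{\alpha} \le 4 d_{ij}|U_j|/n$, so $2^{2\alpha}|U_{i,\alpha}| \le 16 d_{ij}^2 |U_j|^2 |U_{i,\alpha}|/n^2$; plugging in $|U_{i,\alpha}| \le \delta n$ turns this into $2^{2\alpha}|U_{i,\alpha}| \le 16\delta^2 d_{ij} \cdot d_{ij}|U_j|$, at which point the hypothesis $\delta < q^{-k^2/4}$ together with $d_{ij} \le \gbinom{k}{\lfloor k/2\rfloor}_q = O_k(q^{\lfloor k^2/4\rfloor})$ yields $\delta^2 d_{ij} = O_k(1)$, again giving the required bound.

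This is essentially a mechanical EML argument; the only mild subtlety is the case where the spectral term does not dominate, where one has to convert the lower bound on $|U_j|$ into a bound on $|U_{i,\alpha}|$ using the size cap $|U_{i,\alpha}|\le \delta n$ — this is precisely where the hypothesis $\delta < q^{-k^2/4}$ enters, to ensure $\delta^2 \cdot d_{ij} = O_k(1)$.
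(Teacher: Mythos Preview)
Your proof is correct and takes essentially the same approach as the paper: apply the bipartite expander mixing lemma to $(V_i,V_j)$ with the eigenvalue bound from \Cref{lem:bipartite-spectral-bound}, and compare against the trivial lower bound $e(U_{i,\alpha},U_j)\ge 2^{\alpha-1}|U_{i,\alpha}|$. The only cosmetic difference is that the paper, instead of splitting into cases, uses $\sqrt{|U_{i,\alpha}||U_j|}\le \delta n$ to write $\tfrac{d_{ij}|U_{i,\alpha}||U_j|}{n}\le \delta d_{ij}\sqrt{|U_{i,\alpha}||U_j|}$ and then absorbs $\delta d_{ij}=O_k(1)$ into the spectral term, obtaining the single bound $e(U_{i,\alpha},U_j)\le O_k(\sqrt{q^{t(k-t)}})\sqrt{|U_{i,\alpha}||U_j|}$ directly.
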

We use the bipartite expander mixing lemma to prove the above statement.
\begin{lemma}[Bipartite Expander-Mixing Lemma] \label{lem:EML}
    Let $G = (L,R,E)$ be a bipartite graph with left-degree~$c$ and right-degree~$d$.
    Let $A \subseteq L$ have $|A|/|L| = \alpha$ and $B \subseteq R$ have $|B|/|R| = \beta$.
    Let $\lambda$ denote the magnitude of the largest nontrivial (not~$\pm \sqrt{cd}$) eigenvalue of $G$'s adjacency matrix. 
    Then
    \begin{equation}
        \abs*{\frac{|E(A,B)|}{|E(L,R)|} - \alpha \beta} 
        \leq \frac{\lambda}{\sqrt{cd}}\sqrt{\alpha(1-\alpha) \beta(1-\beta)}
        \leq \frac{\lambda}{\sqrt{cd}}\sqrt{\alpha \beta} \mper
    \end{equation}
\end{lemma}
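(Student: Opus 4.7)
The plan is to recognize the bipartite expander-mixing lemma as a direct consequence of the singular-value decomposition of the biadjacency matrix, via decomposing the indicator vectors $\one_A$ and $\one_B$ into their components along and orthogonal to the all-ones vectors.

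First I would set up the linear-algebraic framework. Let $M \in \{0,1\}^{L \times R}$ be the biadjacency matrix of $G$. Then $|E(A,B)| = \one_A^\top M \one_B$ and $|E(L,R)| = \one_L^\top M \one_R = c|L| = d|R|$, and $|E(L,R)|^2 = cd\,|L|\,|R|$. Since $G$ is $(c,d)$-biregular, $M \one_R = c \one_L$ and $M^\top \one_L = d \one_R$, so $(\one_L/\sqrt{|L|}, \one_R/\sqrt{|R|})$ is a pair of top singular vectors of $M$ with singular value $\sqrt{cd}$. A standard observation is that the eigenvalues of the full symmetric adjacency matrix of $G$ are exactly $\pm \sigma_i(M)$, so the largest nontrivial eigenvalue magnitude $\lambda$ of $G$'s adjacency matrix equals the \emph{second} largest singular value of $M$.

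Next I would decompose: write $\one_A = \alpha \one_L + f$ with $f = \one_A - \alpha \one_L \perp \one_L$, and similarly $\one_B = \beta \one_R + g$ with $g = \one_B - \beta \one_R \perp \one_R$. Expanding
\[
\one_A^\top M \one_B = \alpha\beta \one_L^\top M \one_R + \alpha \one_L^\top M g + \beta f^\top M \one_R + f^\top M g,
\]
the two middle cross terms vanish because $\one_L^\top M = d \one_R^\top$ is parallel to $\one_R$ (and $f \perp \one_L$ symmetrically), so $\one_L^\top M g = d \langle \one_R, g\rangle = 0$ and $f^\top M \one_R = c\langle f, \one_L\rangle = 0$. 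Hence
\[
|E(A,B)| - \alpha\beta\,|E(L,R)| = f^\top M g.
\]
Because $f \perp \one_L$ and $g \perp \one_R$, applying $M$ to $g$ lies in $\one_L^{\perp}$, and on these orthogonal complements the operator norm of $M$ is exactly the second singular value $\lambda$. Therefore $|f^\top M g| \le \lambda \|f\|_2 \|g\|_2$.

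Finally I would compute the two norms: $\|f\|_2^2 = |A|(1-\alpha)^2 + (|L|-|A|)\alpha^2 = \alpha(1-\alpha)|L|$, and similarly $\|g\|_2^2 = \beta(1-\beta)|R|$. Putting it together and dividing by $|E(L,R)| = \sqrt{cd\,|L|\,|R|}$ yields
\[
\abs*{\frac{|E(A,B)|}{|E(L,R)|} - \alpha\beta} \le \frac{\lambda\sqrt{\alpha(1-\alpha)\beta(1-\beta)\,|L|\,|R|}}{\sqrt{cd\,|L|\,|R|}} = \frac{\lambda}{\sqrt{cd}}\sqrt{\alpha(1-\alpha)\beta(1-\beta)},
\]
and the second inequality in the statement is immediate from $1-\alpha,1-\beta \le 1$. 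There is no real obstacle here; the only point that requires care is justifying that $\lambda$ coincides with the second singular value of the biadjacency matrix (not just the second eigenvalue of the bipartite adjacency matrix), which follows cleanly from the block structure of the adjacency matrix.
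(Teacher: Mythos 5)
Your proof is correct. The paper states \Cref{lem:EML} as a standard fact and does not include a proof, and your argument — decomposing $\one_A$ and $\one_B$ into their components along and orthogonal to the all-ones vectors, killing the cross terms via biregularity, and bounding $f^\top M g$ by the second singular value — is the canonical derivation, with all the norm computations done correctly. The only caveat worth flagging is one you already half-identified: the identification of $\lambda$ (the largest eigenvalue other than $\pm\sqrt{cd}$) with the second singular value $\sigma_2(M)$ requires that $\sqrt{cd}$ be a simple singular value, i.e.\ that the graph be connected; for disconnected biregular graphs the lemma as literally stated fails, but this is a defect of the standard phrasing rather than of your proof.
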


\begin{proof}[Proof of \Cref{lem:U-ia-density}]
    Consider the bipartite graph $(V_i, V_j)$ induced by the $1$-skeleton of $X$. Notice that both the left and right degrees are $d_{ij} = \gbinom{k}{(j-i)_k}_q$. By \Cref{lem:bipartite-spectral-bound}, the second eigenvalue $\lambda_2(V_i, V_j)$ of this graph is bounded above by $O_k(1) \cdot \sqrt{q^{(j-i)_k(k-(j-i)_k)}}$. 
    
    We know that $e(U_{i,\alpha}, U_j) \ge 2^{\alpha-1} \cdot |U_{i,\alpha}|$. By the bipartite expander mixing lemma (\Cref{lem:EML}), we have that 
    \begin{align*}
        e(U_{i,\alpha}, U_j) 
        &\le \frac{|U_{i,\alpha}| |U_j| d_{ij}}{n} + \lambda_2(V_i, V_j) \cdot \sqrt{|U_{i,\alpha}| |U_j|} \\
        &\le \left( \delta d_{ij} + \lambda_2(V_i, V_j) \right) \cdot \sqrt{|U_{i,\alpha}| |U_j|} \\
        &= O_{k}(1) \cdot \sqrt{q^{(j-i)_k(k-(j-i)_k)}} \cdot \sqrt{|U_{i,\alpha}| |U_j|}.
    \end{align*}
    Combining inequalities, this tells us that
    \[
        2^{\alpha - 1} \cdot |U_{i,\alpha}|
        \le e(U_{i,\alpha}, U_j) 
        \le O_{k}(1) \cdot \sqrt{q^{(j-i)_k(k-(j-i)_k)}} \cdot \sqrt{|U_{i,\alpha}| |U_j|},
    \]
    which implies that 
    \[
        |U_{i,\alpha}| \le O_k(1) \cdot q^{(j-i)_k (k-(j-i)_k)} \cdot 2^{-2\alpha} \cdot |U_j| \mper
        \qedhere
    \]
\end{proof}

The next lemma gives a bound on edge density between parts within the link of a vertex. For $u \in X(0)$, let $V_j(u)$ denote all the vertices in $V_j$ that share an edge with $u$. 

\begin{lemma} \label{lem:link-edge-density}
    For $0 < i < j \in [k]$, and for $u \in V_0$, $U_i(u) \subseteq V_i(u)$, and $U_j(u) \subseteq V_j(u)$, it holds that 
    \[
        e(U_i(u), U_j(u)) \le O_k(1) \cdot \left( \frac{|U_i(u)| |U_j(u)|}{q^{i(k-j)}} +  \sqrt{q^{(j-i)(k-(j-i)-1)}} \cdot \sqrt{|U_i(u)| |U_j(u)|} \right).
    \]
\end{lemma}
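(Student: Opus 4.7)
The plan is to invoke the bipartite expander mixing lemma (\Cref{lem:EML}) on the subgraph between $V_i(u)$ and $V_j(u)$ sitting inside the link of $u$.

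By the last bullet of \Cref{thm:LSV}, the link of $u \in V_0$ is canonically isomorphic to the spherical building $\bbP(\bbF_q^k)$, and under this identification $V_i(u)$ (resp.\ $V_j(u)$) corresponds to the set of $i$-dimensional (resp.\ $j$-dimensional) subspaces of $\bbF_q^k$. The bipartite graph induced between them is then the subspace incidence graph: $W_i \in V_i(u)$ is adjacent to $W_j \in V_j(u)$ iff $W_i \subset W_j$ (using that $i<j$). Its parameters are the standard Gaussian binomial counts:
\[
|V_i(u)| = \gbinom{k}{i}_q,\qquad |V_j(u)| = \gbinom{k}{j}_q,\qquad \text{left degree} = \gbinom{k-i}{j-i}_q,\qquad \text{right degree} = \gbinom{j}{i}_q,
\]
and by \Cref{lem:link-second-eigenvalue} its second eigenvalue satisfies $\lambda_2 = O_k(1) \cdot \sqrt{q^{(j-i)(k-(j-i)-1)}}$.

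With this setup in hand, I would apply \Cref{lem:EML} directly to $U_i(u) \subseteq V_i(u)$ and $U_j(u) \subseteq V_j(u)$. Since $|E| = \gbinom{k}{i}_q \gbinom{k-i}{j-i}_q$, the uniform-density estimate $\alpha\beta|E|$ from \Cref{lem:EML} works out to $|U_i(u)||U_j(u)| \cdot \gbinom{k-i}{j-i}_q / \gbinom{k}{j}_q$, and the error term is $\lambda_2\sqrt{|U_i(u)||U_j(u)|}$. Using the approximation $\gbinom{a}{b}_q \sim q^{b(a-b)}$, the ratio of Gaussian binomials in the first term simplifies (up to an $O_k(1)$ factor) to $q^{(j-i)(k-j) - j(k-j)} = q^{-i(k-j)}$, yielding the first term in the claimed bound; substituting the bound on $\lambda_2$ into the second term yields the second term exactly.

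There is essentially no real obstacle here beyond bookkeeping: once the link is identified with the subspace incidence graph via \Cref{thm:LSV}, and the second eigenvalue bound is imported from \Cref{lem:link-second-eigenvalue}, the rest reduces to routine arithmetic with Gaussian binomials. The only care required is absorbing the multiplicative constants from the approximations $\gbinom{a}{b}_q \sim q^{b(a-b)}$ cleanly into the $O_k(1)$ prefactor in the statement; no special small-set hypothesis on $|U_i(u)|$ or $|U_j(u)|$ is needed since the EML applies to arbitrary subsets.
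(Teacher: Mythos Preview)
Your proposal is correct and matches the paper's proof essentially line for line: identify the link of $u$ with the spherical building via \Cref{thm:LSV}, read off the Gaussian-binomial degree and size parameters, plug in the second-eigenvalue bound from \Cref{lem:link-second-eigenvalue}, and apply the bipartite expander mixing lemma (\Cref{lem:EML}), absorbing all Gaussian-binomial approximations into the $O_k(1)$. The paper writes the left degree as $\gbinom{k-i}{k-j}_q$ whereas you write $\gbinom{k-i}{j-i}_q$, but these are of course equal.
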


\begin{proof}
    Let $L$ be the bipartite graph in the link on $(V_i(u), V_j(u))$.
    The left side has $d_{0i} = \gbinom{k}{i}_q \approx q^{i(k-i)}$ vertices, and the right side has $d_{0j} = \gbinom{k}{j}_q \approx q^{j(k-i)}$ vertices.
    The left degree is $\gbinom{k-i}{k-j}_q \approx q^{(j-i)(k-j)}$ and the right degree is $\gbinom{j}{i}_q \approx q^{i(j-i)}$.

    By \Cref{lem:link-second-eigenvalue}, the second eigenvalue of this graph is at most $\sqrt{q^{(j-i)(k-(j-i)-1)}}$.
    Thus, by the bipartite expander mixing lemma (\Cref{lem:EML}), we have:
    \begin{align*}
        e(U_i(u), U_j(u)) 
        &\le \frac{|U_i(u)| |U_j(u)| \sqrt{\gbinom{k-i}{k-j}_q \gbinom{j}{i}_q}}{\sqrt{\gbinom{k}{i}_q \gbinom{k}{j}_q}} + \lambda_2(V_i(u), V_j(u)) \cdot \sqrt{|U_i(u)| |U_j(u)|} \\
        &= O_k(1) \cdot \left( \frac{|U_i(u)| |U_j(u)|}{q^{i(k-j)}} +  \sqrt{q^{(j-i)(k-(j-i)-1)}} \cdot \sqrt{|U_i(u)| |U_j(u)|} \right).  \qedhere
    \end{align*}
\end{proof}

\subsection{Warm up: bounding triangles in small sets in the 3-partite Ramanujan complex}

Recall that our main goal of this section is to bound the number of triangles contained within small sets of vertices in the $k$-partite Ramanujan complex. As a warm-up, here we will calculate this quantity for the case of $k=3$. The same ideas will generalize to the general case in \Cref{sec:triangles-in-U-general-k}, but we find it more transparent to first demonstrate this easier calculation.

In the case of $k=3$, the vertices of the Ramanujan complex is of the form $X(0) = (V_0, V_1, V_2)$. Let $U = U_0 \cup U_1 \cup U_2 \subseteq X(0)$, where $U_i \subseteq V_i$ for $i \in [3]$, be a small set of vertices of $X$, satisfying that $|U| \le \delta \cdot n = \frac{\delta}{3} \cdot |X(0)|$ where $\delta = \frac1{q^{9/4}}$ as to satisfy the conditions of~\Cref{lem:U-ia-density}. 

The Ramanujan complex is a partite complex, with edges only between different parts, and hence any triangle contained within $U$ must have exactly one vertex from each of $U_0$, $U_1$, and $U_2$.
In fact, if we fix $u \in U_0$ and denote $U_1(u) := U_1 \cap V_1(u)$ and $U_2(u) := U_2 \cap V_2(u)$, then the number of triangles containing $u$ is precisely the number of edges within the graph $(U_1(u), U_2(u))$. 
This latter quantity can be bounded by an application of the expander mixing lemma to the link of $u$ (formally given in \Cref{lem:link-edge-density}). 

Our strategy for bounding the number of triangles contained within $U$ can be described as follows:
\begin{enumerate}[{(1)}]
\item 
    We will split the vertices in $U_0$ according to the number of neighbors it has within $U_1 \cup U_2$. For any $\alpha > 0$, \Cref{lem:U-ia-density} gives us an upper bound on the number of vertices $u \in U_0$ that can have $\approx 2^\alpha$ neighbors in $U_1 \cup U_2$. 
\item 
    If a vertex $u \in U_0$ has $\approx 2^\alpha$ neighbors in $U_1 \cup U_2$, then the number of triangles containing $u$ is equal to the number of edges in $(U_1(u), U_2(u))$ within the link of $u$. \Cref{lem:link-edge-density} gives us an upper bound on this quantity in terms of $\alpha$.
\end{enumerate}

We now execute this strategy. Let $\triangle$ denote the number of triangles contained within $U$.

\begin{lemma} \label{lem:k=3-triangle-bound}
    Let $\delta < \frac1{q^{9/4}}$. For any $U = U_0 \cup U_1 \cup U_2 \subseteq X(0)$ with $|U| \leq \delta |X(0)|$, the number of triangles in $X(2)$ contained in $U$ is at most $O(q^{3/2})|U|$.
\end{lemma}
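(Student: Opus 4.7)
Since the Ramanujan complex is $3$-partite, every triangle contained in $U$ has exactly one vertex in each of $U_0, U_1, U_2$, and I will count triangles by pivoting on $U_0$. For a fixed $u\in U_0$, writing $d_j(u) := |U_j \cap V_j(u)|$, the number of triangles through $u$ whose other two vertices lie in $U$ equals the number of edges in the link of $u$ between $U_1 \cap V_1(u)$ and $U_2 \cap V_2(u)$. Since the link of $u$ is isomorphic to $\bbP(\bbF_q^3)$, applying Lemma~\ref{lem:link-edge-density} with $(i,j,k)=(1,2,3)$ gives
\[
    T_u \;\le\; O\!\left( \frac{d_1(u)\, d_2(u)}{q} \;+\; \sqrt{q \cdot d_1(u) \, d_2(u)} \right),
\]
and the quantity I wish to bound is $\triangle = \sum_{u\in U_0} T_u$.

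The main tool is a dyadic decomposition of $U_0$ by the total link degree $d(u) := d_1(u) + d_2(u)$. Define $U_{0,\alpha} := \{u \in U_0 : d(u) \in [2^{\alpha-1}, 2^\alpha)\}$; since $d(u) = O(q^2)$, only $\alpha = 1, 2, \ldots, O(\log q)$ matter. For $u \in U_{0,\alpha}$ we have $d_1(u)\, d_2(u) \le 4^{\alpha}$, so $T_u \le O(4^{\alpha}/q + \sqrt{q}\cdot 2^{\alpha})$. I have two bounds on $|U_{0,\alpha}|$: trivially, $|U_{0,\alpha}| \le |U|$; and observing that any $u \in U_{0,\alpha}$ satisfies $d_1(u) \ge 2^{\alpha-2}$ or $d_2(u) \ge 2^{\alpha-2}$, a union bound over $j\in\{1,2\}$ combined with Lemma~\ref{lem:U-ia-density} (applied to each bipartite graph $(V_0, V_j)$ and summed geometrically over dyadic levels at least $\alpha-1$) yields
\[
    |U_{0,\alpha}| \;\le\; O(q^2) \cdot 2^{-2\alpha} \cdot |U|.
\]
The hypothesis $\delta < q^{-9/4}$ is exactly the $\delta < q^{-k^2/4}$ with $k=3$ required by Lemma~\ref{lem:U-ia-density}, and $|U_{0,\alpha}| \le |U_0| \le \delta n$ ensures the lemma's precondition on the dyadic pieces.

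To finish, I sum $|U_{0,\alpha}| \cdot T_u$ over $\alpha$, using whichever of the two bounds on $|U_{0,\alpha}|$ is tighter; the two bounds cross at $2^{\alpha} \asymp q$. For $2^{\alpha} \le q$ I use $|U_{0,\alpha}| \le |U|$: the spectral term $\sqrt{q}\cdot 2^{\alpha}$ of $T_u$ dominates and the geometric sum gives $|U|\sqrt{q}\cdot O(q) = O(q^{3/2})|U|$. For $2^{\alpha} > q$ I use the Lemma~\ref{lem:U-ia-density} bound: the spectral contribution per level becomes $O(q^{5/2})\cdot 2^{-\alpha}\cdot |U|$ and sums geometrically from $2^{\alpha}=q$ to $O(q^{3/2})|U|$, while the uniform contribution is $O(q)|U|$ per level over $O(\log q)$ levels, i.e.\ $O(q\log q)|U|$, which is absorbed into $O(q^{3/2})|U|$. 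This gives the desired bound $\triangle \le O(q^{3/2})|U|$.

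The one place needing genuine care is precisely the two-regime split at $2^{\alpha} \asymp q$: using only the trivial bound $|U_{0,\alpha}|\le |U|$ makes the spectral term at the highest dyadic level blow up to $\Theta(q^{5/2})|U|$, and using only the Lemma~\ref{lem:U-ia-density} bound makes the spectral term at $\alpha=O(1)$ blow up to $\Theta(q^{5/2})|U|$. Matching the two bounds at the crossover so that each regime contributes $\Theta(q^{3/2})|U|$ is what produces the clean final bound; everything else is a routine geometric summation.
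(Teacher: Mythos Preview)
Your proof is correct and follows essentially the same approach as the paper: dyadic decomposition of $U_0$ by total link degree into $U_1\cup U_2$, bounding $|U_{0,\alpha}|$ via the trivial bound and Lemma~\ref{lem:U-ia-density}, bounding triangles through each $u$ via Lemma~\ref{lem:link-edge-density}, and splitting the sum at $2^\alpha \asymp q$. If anything, you are slightly more careful than the paper in justifying the application of Lemma~\ref{lem:U-ia-density} (which is stated for the degree into a \emph{single} part) by first reducing to $d_1(u)\ge 2^{\alpha-2}$ or $d_2(u)\ge 2^{\alpha-2}$ and summing over dyadic levels.
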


\begin{proof}
    First, notice that for any $u \in U_0$ the number of neighbors it has within $V_1 \cup V_2$, and hence the maximal possible number of neighbors it has within $U_1 \cup U_2$, is upper bounded by $d_{01} + d_{02} = O(q^2)$. Let $B = \lceil \log_2 (d_{01} + d_{02}) \rceil + 1 = O(\log_2 q)$. 
    
    We partition $U_0$ into sets $U_{0,\emptyset} \cup U_{0,1} \cup U_{0,2} \cup \dots \cup U_{U_0,B}$ where 
    \[ 
        U_{0,\alpha} = \{ u \in U_0 : |U_1(u)| + |U_2(u)| \in [2^{\alpha-1}, 2^\alpha) \}
    \]
    and $U_{0,\emptyset}$ denotes all vertices in $U_0$ with no neighbors in $U_1 \cup U_2$. The vertices in $U_{0,\emptyset}$ contribute no triangles to our count, so we may ignore them. For any $\alpha \in [1, B]$, combining with the trivial observation that $|U_{0,\alpha}| \le |U|$, \Cref{lem:U-ia-density} gives us an upper bound on the size of $U_{0,\alpha}$:
    \[
        |U_{0,\alpha}| \le \min \braces*{ 1, O(q^2 2^{-2\alpha}) } \cdot |U| \mper \numberthis \label{eqn:U_0,a}
    \]

    The number of triangles contained in $U$ is equal to
    $\triangle = \sum_{u\in U_0} e(U_1(u), U_2(u))$.
    By \Cref{lem:link-edge-density}, for any $\alpha$ and $u\in U_{0,\alpha}$, we have
    \[
        e(U_1(u), U_2(u)) \le O(1) \cdot \left( q^{-1} 2^{2\alpha} + \sqrt{q} \cdot 2^\alpha \right).
    \]
    Thus, we may upper bound the number of triangles in $U$ by
    \begin{align*}
        \triangle \le O(1) \cdot \sum_{\alpha=1}^B |U_{0,\alpha}| \cdot \left( q^{-1} 2^{2\alpha} + \sqrt{q} \cdot 2^\alpha \right). 
    \end{align*}
    For $2^\alpha \le q$, we have that $|U_{0,\alpha}| \le |U|$. For $2^\alpha > q$, Equation~\eqref{eqn:U_0,a} tells us that $|U_{0,\alpha}| \le O(q^2 2^{-2\alpha}) |U|$, in which case the summand $|U_{0,\alpha}| \cdot \left( q^{-1} 2^{2\alpha} + \sqrt{q} \cdot 2^\alpha \right) \le q + q^{5/2} 2^{-\alpha}$. Combining the two, we get that the number of triangles in $U$ is bounded by:
    \begin{align*}
        \triangle 
        &\le \sum_{\alpha : 2^\alpha \le q} \left( q^{-1} 2^{2\alpha} + \sqrt{q} \cdot 2^\alpha \right) \cdot |U| + \sum_{\alpha : 2^\alpha > q} \left( q + q^{5/2} 2^{-\alpha} \right) \\
        &= O(1) \cdot |U| \cdot \left( q^{-1} \cdot q^2 + \sqrt{q} \cdot q + q \log q + q^{5/2} \cdot q^{-1} \right) \\
        &\le O(q^{3/2}) |U|,
    \end{align*}
    completing the proof.
\end{proof}

\subsection{Bounding triangles in small sets: general case} \label{sec:triangles-in-U-general-k}

We now bound the number of triangles within small sets of the $k$-partite Ramanujan complex.
Our strategy is similar to the $k=3$ case considered in the previous section but is more notationally complex. 

Let $U = U_0 \cup U_1 \cup \dots \cup U_{k-1} \subseteq X(0)$ be a small set of size $\le \delta n = \frac{\delta}{k} \cdot |X(0)|$.
Our goal is to bound the number of triangles contained within $U$.

Fix three parts $i_0 < i_1 < i_2 \in [k]$.
Our goal is to bound the number of triangles contained within $U_{i_0} \cup U_{i_1} \cup U_{i_2}$ in terms of $|U|$, which we will denote by $\triangle_{i_0, i_1, i_2}$.
In fact, by rotational symmetry of the Ramanujan complex, we may assume that $i_0 = 0$ (by replacing $i_1$ with $i_1 - i_0$ and $i_2$ with $i_2 - i_0$). So, let us relabel and consider the case of triangles within $U_0 \cup U_i \cup U_j$, where $0 < i < j < k$. 

\begin{lemma}
    Let $\delta < \frac{1}{q^{k^2/4}}$. Let $U = U_0 \cup U_1 \cup \dots \cup U_{k-1} \subseteq X(0)$ such that $|U| \le \delta n$. For any $0 < i < j < k$, it holds that
    \[
        \triangle_{0,i,j} \le O_k(1) \cdot \sqrt{q^{(j-i)(k-(j-i)-1)}} \cdot q^{i(k-i)/4} \cdot q^{j(k-j)/4} \cdot |U| \mper
    \]
\end{lemma}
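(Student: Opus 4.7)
The plan shares its high-level structure with \Cref{lem:k=3-triangle-bound}: we express $\triangle_{0,i,j} = \sum_{u \in U_0} e(U_i(u), U_j(u))$ and bound each link edge-count via \Cref{lem:link-edge-density}. Writing $s := \sqrt{q^{(j-i)(k-(j-i)-1)}}$ for brevity, this yields
$$\triangle_{0,i,j} \le O_k(1) \cdot \parens*{q^{-i(k-j)} T_1 + s\, T_2},$$
where $T_1 := \sum_{u} |U_i(u)||U_j(u)|$ and $T_2 := \sum_{u} \sqrt{|U_i(u)||U_j(u)|}$. Rather than dyadically bucketing $U_0$ as in the $k=3$ warm-up, I will bound $T_1$ and $T_2$ directly using two applications of Cauchy--Schwarz combined with the bipartite spectral bound from \Cref{lem:bipartite-spectral-bound}.

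For $T_2$, Cauchy--Schwarz gives $T_2 \le \sqrt{e(U_0,U_i) \cdot e(U_0,U_j)}$. Since the bipartite graph $(V_0, V_i)$ has degree $d_{0i} \approx q^{i(k-i)}$ and second eigenvalue $O_k(\sqrt{d_{0i}})$ (\Cref{lem:bipartite-spectral-bound}), the bipartite expander mixing lemma --- combined with the assumption $\delta < q^{-k^2/4} \le 1/d_{0i}$, so that the ``mean'' term is absorbed by the spectral term --- gives $e(U_0, U_i) \le O_k(q^{i(k-i)/2}) |U|$, and similarly for $U_j$. Thus $s\,T_2 \le O_k\parens*{s \cdot q^{i(k-i)/4 + j(k-j)/4}} |U|$, which is exactly the target.

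For $T_1$, Cauchy--Schwarz gives $T_1 \le \sqrt{\sum_u |U_i(u)|^2 \cdot \sum_u |U_j(u)|^2}$. I bound $\sum_{u \in U_0} |U_i(u)|^2 \le \norm*{B \Ind_{U_i}}^2$, where $B$ is the $V_0 \times V_i$ biregular bipartite adjacency matrix. Decomposing $\Ind_{U_i}$ along the top right singular direction (proportional to $\Ind_{V_i}$) and its orthogonal complement, the two contributions are $d_{0i}^2 |U_i|^2 / n$ and at most $\sigma_2^2 |U_i|$ with $\sigma_2 \le O_k(\sqrt{d_{0i}})$; using $|U_i| \le \delta n \le n/d_{0i}$ shows both are $O_k(q^{i(k-i)}) |U|$. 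Combining, $q^{-i(k-j)} T_1 \le O_k\parens*{q^{i(k-i)/2 + j(k-j)/2 - i(k-j)}} |U|$.

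The final step --- and the main obstacle --- is verifying that this ``diagonal'' contribution is absorbed by the target, which reduces to the exponent inequality
$$\tfrac{i(k-i)}{4} + \tfrac{j(k-j)}{4} - i(k-j) \le \tfrac{(j-i)(k-(j-i)-1)}{2}.$$
Clearing denominators, this is $k(i+j) \ge i^2 + j^2 + 2(j-i)$; since $k \ge j+1$, it follows from $(j+1)(i+j) - i^2 - j^2 - 2(j-i) = i(j-i) + 3i - j$, which is at least $2$ for all $i \ge 1$, $j > i$ (via $(i-1)(j-i) \ge 0$). The resulting slack of $\ge 1/2$ in the $q$-exponent ensures the $T_1$-contribution is absorbed. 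The requirement $i \ge 1$ hints at why analogous strategies for higher-dimensional simplices would need care --- the analogous exponent inequalities can fail (cf.\ \Cref{rem:parameter-requirements}).
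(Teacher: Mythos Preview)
Your proof is correct and, in fact, cleaner than the paper's. Both arguments start from the same decomposition
\[
\triangle_{0,i,j}=\sum_{u\in U_0} e(U_i(u),U_j(u))
\]
and plug in \Cref{lem:link-edge-density} to get the two contributions. The difference is in how the resulting sums are controlled. The paper performs a two-parameter dyadic decomposition of $U_0$ into buckets $U_{0,\alpha,\beta}$ according to $|U_i(u)|\in[2^{\alpha-1},2^{\alpha})$ and $|U_j(u)|\in[2^{\beta-1},2^{\beta})$, bounds $|U_{0,\alpha,\beta}|$ via \Cref{lem:U-ia-density}, and then splits the double sum into three regions depending on which of the three bounds $\{1,\,q^{i(k-i)}2^{-2\alpha},\,q^{j(k-j)}2^{-2\beta}\}$ is active. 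Each region produces the same two-term expression, and the paper then verifies (its equation \eqref{eq:eml-compare-terms}) that the ``diagonal'' exponent $(j-i)(k-(j-i))/2$ is dominated by the target exponent.

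Your route replaces all of the bucketing with two Cauchy--Schwarz steps. For $T_2$ you get exactly the target bound. For $T_1$, extending the sum from $U_0$ to $V_0$ and using the singular value decomposition of the $V_0\times V_i$ biadjacency matrix gives $\sum_u |U_i(u)|^2\le O_k(d_{0i})|U|$ directly --- this is the ``integrated'' version of what the paper extracts pointwise via \Cref{lem:U-ia-density}. Your $T_1$ contribution has exponent $\tfrac{i(k-i)}{2}+\tfrac{j(k-j)}{2}-i(k-j)=\tfrac{(j-i)(k-(j-i))}{2}$, which is precisely the paper's ``first term'', and your exponent inequality is literally the same inequality the paper checks in \eqref{eq:eml-compare-terms} (with the same slack of $\ge 1/2$). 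So the two proofs converge on the identical final comparison; yours just reaches it without the bookkeeping of dyadic regions, at the cost of not exhibiting the finer level-set structure that the bucketing reveals.
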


\begin{proof}
    For any $u \in U_0$, note that $|V_i(u)| \le d_{0i} < q^{k^2/4}$ and $|V_j(u)| \le d_{0j} < q^{k^2/4}$. Let $B = \lceil \log_2 q^{k^2/4} \rceil = O_k(1) \cdot \log_2 q$.
    
    Define $U_i(u) := U_i \cap V_i(u)$. We will partition $U_0$ into sets $U_{0,\emptyset} \cup (\bigcup_{0 < \alpha, \beta \le B} U_{0, \alpha, \beta})$, where $U_{0,\alpha,\beta} \subseteq U_0$ denotes the set of vertices $u \in U_0$ with $|U_i(u)| \in [2^{\alpha-1}, 2^\alpha)$ and $|U_j(u)| \in [2^{\beta-1}, 2^\beta)$, and $U_{0,\emptyset} \subseteq U_0$ denotes the set of vertices $u \in U_0$ with either $U_i(u) = \emptyset$ or $U_j(u) = \emptyset$. Notice that none of the $u \in U_{0,\emptyset}$ contribute any triangles.

    By \Cref{lem:U-ia-density} and also observing that $|U_{0,\alpha,\beta}| \le |U_0| \le |U|$, we have that 
    \[
        |U_{0,\alpha,\beta}| \le O_k(1) \cdot \min \left\{ 1, q^{i(k-i)} \cdot 2^{-2\alpha}, q^{j(k-j)} \cdot 2^{-2\beta} \right\} \cdot |U|.
    \]
    For any $u \in U_{0,\alpha,\beta}$, we know that $|U_i(u)| \in [2^{\alpha-1}, 2^\alpha)$ and $|U_j(u)| \in [2^{\beta-1}, 2^\beta)$. Thus, by \Cref{lem:link-edge-density} we have:
    \[
        e(U_i(u), U_j(u)) \le O_k(1) \cdot \left( \frac{2^{\alpha + \beta}}{q^{i(k-j)}} + \sqrt{q^{(j-i)(k-(j-i)-1)}} \cdot 2^{(\alpha + \beta)/2} \right).
    \]
    Altogether, this gives us that 
    \begin{align*}
        \triangle_{0,i,j} 
        &= \sum_{u \in U_0} e(U_i(u), U_j(u)) \\
        &\le \sum_{0 < \alpha, \beta \le B} |U_{0,\alpha,\beta}| \cdot O_k(1) \cdot \left( \frac{2^{\alpha + \beta}}{q^{i(k-j)}} + \sqrt{q^{(j-i)(k-(j-i)-1)}} \cdot 2^{(\alpha + \beta)/2} \right) \\
        &\le O_k(|U|) \cdot \sum_{0 < \alpha, \beta \le B} \min \{ 1, q^{i(k-i)} \cdot 2^{-2\alpha}, q^{j(k-j)} \cdot 2^{-2\beta} \} \cdot \left( \frac{2^{\alpha + \beta}}{q^{i(k-j)}} + \sqrt{q^{(j-i)(k-(j-i)-1)}} \cdot 2^{(\alpha + \beta)/2} \right) \\
        &= O_k(|U|) \cdot \left( \mathsf{term1} + \mathsf{term2} + \mathsf{term3} \right) \mcom
    \end{align*}
    where 
    \begin{align*}
        \mathsf{term1} &= \sum_{\substack{
            0 < \alpha \le \frac{i(k-i)}{2} \log_2 q \\
            0 < \beta \le \frac{j(k-j)}{2} \log_2 q 
        }} \left( \frac{2^{\alpha + \beta}}{q^{i(k-j)}} + \sqrt{q^{(j-i)(k-(j-i)-1)}} \cdot 2^{(\alpha + \beta)/2} \right) \\
        \mathsf{term2} &= \sum_{\substack{
            \frac{i(k-i)}{2} \log_2 q < \alpha \le B \\
            0 < \beta \le \alpha + \frac{j(k-j) - i(k-i)}{2} \log_2 q
        }} q^{i(k-i)} \cdot 2^{-2\alpha} \cdot \left( \frac{2^{\alpha + \beta}}{q^{i(k-j)}} + \sqrt{q^{(j-i)(k-(j-i)-1)}} \cdot 2^{(\alpha + \beta)/2} \right) \\
        \mathsf{term3} &= \sum_{\substack{
            \frac{j(k-j)}{2} \log_2 q < \beta \le B \\
            0 < \alpha \le \beta + \frac{i(k-i) - j(k-j)}{2} \log_2 q
        }} q^{j(k-j)} \cdot 2^{-2\beta} \cdot \left( \frac{2^{\alpha + \beta}}{q^{i(k-j)}} + \sqrt{q^{(j-i)(k-(j-i)-1)}} \cdot 2^{(\alpha + \beta)/2} \right).
    \end{align*}
    We bound each term separately. For $\mathsf{term1}$, by separately evaluating the sum over $\beta$, then $\alpha$, we have that
    \begin{align*}
        \mathsf{term1} 
        &= \sum_{\substack{
            0 < \alpha \le \frac{i(k-i)}{2} \log_2 q \\
            0 < \beta \le \frac{j(k-j)}{2} \log_2 q 
        }} \left( \frac{2^{\alpha + \beta}}{q^{i(k-j)}} + \sqrt{q^{(j-i)(k-(j-i)-1)}} \cdot 2^{(\alpha + \beta)/2} \right) \\
        &= \sum_{0 < \alpha \le \frac{i(k-i)}{2} \log_2 q} O(1) \cdot \left( \frac{2^{\alpha} \cdot q^{j(k-j)/2}}{q^{i(k-j)}} + \sqrt{q^{(j-i)(k-(j-i)-1)}} \cdot 2^{\alpha/2} \cdot 2^{j(k-j)/4} \right) \\
        &= O(1) \cdot \left( \frac{q^{i(k-i)/2} \cdot q^{j(k-j)/2}}{q^{i(k-j)}} + \sqrt{q^{(j-i)(k-(j-i)-1)}} \cdot q^{i(k-i)/4} \cdot q^{j(k-j)/4} \right) \\
        &= O(1) \cdot \left( q^{(j-i)(k-(j-i))/2} + \sqrt{q^{(j-i)(k-(j-i)-1)}} \cdot q^{i(k-i)/4} \cdot q^{j(k-j)/4} \right).
    \end{align*}
    Next, for $\mathsf{term2}$, we first sum over $\beta$, then over $\alpha$, getting
    \begin{align*}
        \mathsf{term2} 
        &= \sum_{\substack{
            \frac{i(k-i)}{2} \log_2 q < \alpha \le B \\
            0 < \beta \le \alpha + \frac{j(k-j) - i(k-i)}{2} \log_2 q
        }} q^{i(k-i)} \cdot 2^{-2\alpha} \cdot \left( \frac{2^{\alpha + \beta}}{q^{i(k-j)}} + \sqrt{q^{(j-i)(k-(j-i)-1)}} \cdot 2^{(\alpha + \beta)/2} \right) \\
        &= O(1) \cdot \sum_{\frac{i(k-i)}{2} \log_2 q < \alpha < B} q^{i(k-i)} \cdot 2^{-2\alpha} \cdot \left( \frac{2^{2\alpha} \cdot q^{(j(k-j) - i(k-i))/2}}{q^{i(k-j)}} + \sqrt{q^{(j-i)(k-(j-i)-1)}} \cdot 2^\alpha \cdot q^{(j(k-j) - i(k-i))/4} \right) \\
        &= O(1) \cdot \sum_{\frac{i(k-i)}{2} \log_2 q < \alpha < B} \left( q^{(j-i)(k-(j-i))/2} + \sqrt{q^{(j-i)(k-(j-i)-1)}} \cdot 2^{-\alpha} \cdot q^{(j(k-j) + 3i(k-i))/4} \right) \\
        &= O_k(1) \cdot \left( q^{(j-i)(k-(j-i))/2} \cdot \log_2 q + \sqrt{q^{(j-i)(k-(j-i)-1)}} \cdot q^{i(k-i))/4} \cdot q^{(j(k-j)/4} \right).
    \end{align*}
    Similarly, for $\mathsf{term3}$, we first sum over $\alpha$ then $\beta$, and get
    \[
        \mathsf{term3} \le O_k(1) \cdot \left( q^{(j-i)(k-(j-i))/2} \cdot \log_2 q + \sqrt{q^{(j-i)(k-(j-i)-1)}} \cdot q^{i(k-i)/4} \cdot q^{j(k-j)/4} \right).
    \]
    
    Now, notice that all three terms are of the form 
    \[
        O_k(1) \cdot \left( q^{(j-i)(k-(j-i))/2} \cdot \log_2 q + \sqrt{q^{(j-i)(k-(j-i)-1)}} \cdot q^{i(k-i)/4} \cdot q^{j(k-j)/4} \right).
    \]
    In fact, it always holds that 
    \[
        q^{(j-i)(k-(j-i))/2} \cdot \log_2 q = o\left(  \sqrt{q^{(j-i)(k-(j-i)-1)}} \cdot q^{i(k-i)/4} \cdot q^{j(k-j)/4} \right), \numberthis \label{eq:eml-compare-terms}
    \]
    so all three terms are 
    \[
        O_k(1) \cdot \sqrt{q^{(j-i)(k-(j-i)-1)}} \cdot q^{i(k-i)/4} \cdot q^{j(k-j)/4} \mcom
    \]
    so
    \[
        \triangle_{0,i,j} \le O_k(1) \cdot \sqrt{q^{(j-i)(k-(j-i)-1)}} \cdot q^{i(k-i)/4} \cdot q^{j(k-j)/4} \cdot |U| \mper
    \]
    Finally, to see~\eqref{eq:eml-compare-terms}, we divide both sides by $q^{(j-i)(k-(j-i))/2}$, which gives
    \begin{align*}
        \log_2 q \leq o(1) \cdot q^{-(j-i)/2} \cdot q^{i(k-i)/4} \cdot q^{j(k-j)/4} = o(1) \cdot q^{(i(k-i+2) + j(k-j-2))/4} \mper
    \end{align*}
    Since $1 \leq i < j \leq k-1$, it follows that $i(k-i+2)$ is minimized at $i=1$, and $j(k-j-2) < 0$ only if $j = k-1$.
    \begin{align*}
        \frac{1}{4} \parens*{i(k-i+2) + j(k-j-2)}
        \geq \frac{1}{4} ((k+1) - (k-1)) = \frac{1}{2} \mper
    \end{align*}
    This implies that
    \begin{align*}
        \frac{(j-i)(k-(j-i))}{2} + \frac{1}{2}
        \leq \frac{(j-i)(k-(j-i)-1)}{2} + \frac{i(k-i)}{4} + \frac{j(k-j)}{4} \mcom
    \end{align*}
    thus establishing \eqref{eq:eml-compare-terms}.
    % \begin{align*}
    %     2(j-i) + 1 
    %     &< (j-i)(i+1) + 2i \\
    %     &= (i+j)(j+1) - i^2 - j^2 \\
    %     \implies 2(j-i) + 1 
    %     &< (i+j)k - i^2 - j^2 \\
    %     &= i(k-i) + j(k-j) \\
    %     \implies \frac{(j-i)(k-(j-i))}{2} + 1 
    %     &< \frac{(j-i)(k-(j-i)-1)}{2} + \frac{i(k-i)}{4} + \frac{j(k-j)}{4} \\
    %     \implies q^{(j-i)(k-(j-i))/2} \cdot \log_2 q 
    %     &\le o(1) \cdot \sqrt{q^{(j-i)(k-(j-i)-1)}} \cdot q^{i(k-i)/4} \cdot q^{j(k-j)/4} \mper \qedhere
    % \end{align*}
\end{proof}

It follows by the rotational symmetry of the Ramanujan complex that a similar bound holds for any choice of three parts.

\begin{corollary} \label{cor:triangle-bound}
    Let $\delta < \frac{1}{q^{k^2/4}}$. Let $U = U_0 \cup U_1 \cup \dots \cup U_{k-1} \subseteq X(0)$ such that $|U| \le \delta n$. For any $i_0 < i_1 < i_2 \in [k]$, it holds that
    \[
        \triangle_{i_0,i_1,i_2} \le O_k(1) \cdot \min_{(i,j) \in \left\{ 
        \substack{
            (i_1 - i_0, i_2 - i_0), \\ (i_2 - i_1, k + i_0 - i_1), \\(k + i_0 - i_2, k + i_1 - i_2)
        }
        \right\}} 
        \left[ \sqrt{q^{(j-i)(k-(j-i)-1)}} \cdot q^{i(k-i)/4} \cdot q^{j(k-j)/4} \cdot |U| \right].
    \]
\end{corollary}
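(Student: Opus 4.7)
The plan is to reduce the general case directly to the previous lemma by exploiting the cyclic symmetry of the $k$-partite Ramanujan complex, and by noting that the triangle-counting argument is symmetric in the choice of which part we sum vertices over. Concretely, I would count $\triangle_{i_0,i_1,i_2}$ in three different ways: by summing over $u\in U_{i_0}$ and bounding $e(U_{i_1}(u),U_{i_2}(u))$ in the link of $u$; by summing over $u\in U_{i_1}$ and bounding $e(U_{i_0}(u),U_{i_2}(u))$; and by summing over $u\in U_{i_2}$ and bounding $e(U_{i_0}(u),U_{i_1}(u))$. Each of these is an instance of the exact computation performed for the previous lemma, with a different choice of basepoint playing the role of ``$V_0$''.

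Next I would observe that the argument used to prove the previous lemma is label-insensitive: Lemma~\ref{lem:U-ia-density} depends only on the cyclic difference $(j-i)_k$ of two parts, and Lemma~\ref{lem:link-edge-density} uses only the spectral and degree data of the link bipartite graph, which is governed by the relative cyclic position of the two ``other'' parts inside the spherical building $\mathbb{P}(\mathbb{F}_q^k)$. Since by Theorem~\ref{thm:LSV} the link of every vertex of $X$ is isomorphic to $\mathbb{P}(\mathbb{F}_q^k)$, the whole derivation can be repeated verbatim with $i_0$, $i_1$, or $i_2$ in the role of the summed-over part. The result of each repetition is the bound of the previous lemma applied to the two cyclic differences from the chosen basepoint.

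These three choices of basepoint produce exactly the three pairs $(i,j)$ appearing inside the $\min$ in the statement. Summing over $U_{i_0}$ gives $(i,j)=(i_1-i_0,i_2-i_0)$; summing over $U_{i_1}$ gives $(i,j)=(i_2-i_1,\,k+i_0-i_1)$, because $i_0-i_1$ must be reduced modulo $k$; summing over $U_{i_2}$ gives $(i,j)=(k+i_0-i_2,\,k+i_1-i_2)$. In each case a one-line check using $0\le i_0<i_1<i_2\le k-1$ confirms that $0<i<j<k$, so that the hypotheses of the previous lemma are met, and the resulting bound is valid. Taking the minimum over the three choices of basepoint yields the stated inequality.

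The only step that requires any care is verifying that in each of the three cyclic relabelings the resulting pair $(i,j)$ satisfies $0<i<j<k$, but this is an elementary algebraic verification rather than a real obstacle; I do not anticipate any substantive difficulty beyond this. If one wanted to avoid relabeling at all, an equivalent route is to observe that every intermediate estimate in the proof of the previous lemma was phrased in terms of the pair of differences between the summed-over part and the other two, so that one can simply rerun the proof with the basepoint replaced by $i_0$, $i_1$, or $i_2$ in turn.
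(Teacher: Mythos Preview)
Your proposal is correct and takes essentially the same approach as the paper, which simply invokes ``rotational symmetry of the Ramanujan complex'' without elaboration. Your write-up is a faithful and more detailed version of that one-line argument: each of the three rotations sending $i_0$, $i_1$, or $i_2$ to the role of part~$0$ produces an instance of the previous lemma with the pair $(i,j)$ you identify, and taking the minimum over the three resulting bounds gives the corollary.
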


\subsection{Bounding faces with triangles in small sets}

In this subsection, we bound the number of $(k-1)$-faces of $X$ that contain a triangle within a small vertex set $U$. We let this set of $(k-1)$-faces be denoted by $F^{k,3}(U)$. Formally, we define:
\[
    F^{k,3}(U) = \left\{ f \in X(k-1) : \left| f \cap U \right| \ge 3 \right\}.
\]
To bound $|F^{k,3}(U)|$, we will bound for each $i_0 \not= i_1 \not= i_2 \in [k]$ the number $\Delta_{i_0,i_1,i_2}$ of triangles contained within $U_{i_0} \cup U_{i_1} \cup U_{i_2}$, then multiply by the number of ways to extend each triangle to a $(k-1)$-face. We've already given an upper bound on $\Delta_{i_0,i_1,i_2}$ in \Cref{cor:triangle-bound}. The second quantity is given in the following claim. 

\begin{claim} \label{claim:k-faces-per-triangle}
    For $0 \le i_0 < i_1 < i_2 < k$, and for $v_{i_0} \in V_{i_0}$, $v_{i_1} \in V_{i_1}$, and $v_{i_2} \in V_{i_2}$ such that $\{ v_{i_0}, v_{i_1}, v_{i_2} \} \in X(2)$, there are $O_k(1) \cdot q^{\binom{i_1-i_0}{2} + \binom{i_2-i_1}{2} + \binom{k+i_0-i_2}{2}}$ $(k-1)$-faces containing $\{ v_{i_0}, v_{i_1}, v_{i_2} \}$.
\end{claim}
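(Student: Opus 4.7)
The plan is to reduce the count to a count of complete flags in $\bbF_q^k$ that extend a fixed length-two chain of subspaces. By \Cref{thm:LSV}, the link of $v_{i_0}$ in $X$ is isomorphic to the spherical building $\bbP(\bbF_q^k)$, and a $(k-1)$-face of $X$ containing $v_{i_0}$ corresponds to a $(k-2)$-face of this link. In the building, $(k-2)$-faces are exactly the complete flags $W_1 \subset W_2 \subset \cdots \subset W_{k-1}$ in $\bbF_q^k$.

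Under this identification, $v_{i_1}$ maps to an $a$-dimensional subspace $W_a$ with $a := i_1 - i_0$, and $v_{i_2}$ maps to a $b$-dimensional subspace $W_b$ with $b := i_2 - i_0$, by the last bullet of \Cref{thm:LSV}. Since $\{v_{i_1}, v_{i_2}\}$ is an edge of $X$, it is also an edge in the link of $v_{i_0}$, so in the building $W_a$ and $W_b$ must be incident; as $a < b$, this forces $W_a \subset W_b$. Thus counting $(k-1)$-faces of $X$ containing the triangle $\{v_{i_0}, v_{i_1}, v_{i_2}\}$ is equivalent to counting complete flags of $\bbF_q^k$ whose $a$-th and $b$-th members coincide with the prescribed $W_a$ and $W_b$.

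Each such flag splits into three independent complete flags: one inside $W_a$, one inside $W_b / W_a \cong \bbF_q^{b-a}$, and one inside $\bbF_q^k / W_b \cong \bbF_q^{k-b}$. The number of complete flags in $\bbF_q^m$ equals the $q$-factorial $[m]_q!$, which is $O_k(1) \cdot q^{\binom{m}{2}}$ by the asymptotics recalled at the start of \Cref{sec:base-graph}. Multiplying the three counts yields $O_k(1) \cdot q^{\binom{a}{2} + \binom{b-a}{2} + \binom{k-b}{2}}$, and substituting $a = i_1 - i_0$ and $b = i_2 - i_0$ reproduces the exponent $\binom{i_1 - i_0}{2} + \binom{i_2 - i_1}{2} + \binom{k+i_0-i_2}{2}$ stated in the claim.

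I do not expect any real obstacle here beyond bookkeeping. The only steps that require care are verifying that the link identification of \Cref{thm:LSV} really does send the dimension of a subspace to the difference of part indices, and verifying that the edge $\{v_{i_1}, v_{i_2}\}$ of $X$ descends to an incidence (not merely a pair of subspaces) in the building.
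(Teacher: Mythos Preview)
Your proposal is correct and follows essentially the same approach as the paper: both reduce to the link of $v_{i_0}$ (identified with $\bbP(\bbF_q^k)$) and count complete flags extending the fixed chain $W_a \subset W_b$ with $a = i_1 - i_0$, $b = i_2 - i_0$. The only cosmetic difference is that you factor the count cleanly as $[a]_q!\,[b-a]_q!\,[k-b]_q!$ via the quotient spaces, whereas the paper writes out the equivalent telescoping product of Gaussian binomial ratios.
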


\begin{proof}
    Let us set $i_1' = i_1 - i_0$ and $i_2' = i_2 - i_0$, and look within the link of $v_{i_0}$. The vertices $v_{i_1}$ and $v_{i_2}$ correspond to a $i_1'$-plane $\rho_{i_1'}$ and $i_2'$-plane $\rho_{i_2'}$ within $\bbP\parens*{\bbF_q^k}$.
    The quantity we are interested in is the number of ways to choose a sequence of planes $\rho_1 \subseteq \rho_2 \subseteq \dots \subseteq \rho_{k-1}$ with $\rho_{i_1'}$ and $\rho_{i_2'}$ fixed, which is equal to 
    \begin{align*}
        \left( \gbinom{i_1'}{1}_q \cdot \frac{\gbinom{i_1'}{2}_q}{\gbinom{2}{1}q} \cdot \dots \cdot \frac{\gbinom{i_1'}{i_1'-1}_q}{\gbinom{i_1'-1}{i_1'-2}_q} \right) 
        \cdot \left( \frac{\gbinom{i_2'}{i_1'+1}_q}{\gbinom{i_1'+1}{i_1'}_q} \cdot \dots \cdot \frac{\gbinom{i_2'}{i_2'-1}_q}{\gbinom{i_2'-1}{i_2'-2}_q} \right) 
        \cdot \left( \frac{\gbinom{k}{i_2'+1}_q}{\gbinom{i_2'+1}{i_2'}_q} \cdot \dots \cdot \frac{\gbinom{k}{k-1}_q}{\gbinom{k-1}{k-2}_q} \right),
    \end{align*}
    which is $O_k(1) \cdot q^{\binom{i_1'}{2} + \binom{i_2'-i_1'}{2} + \binom{k-i_2'}{2}} = O_k(1) \cdot q^{\binom{i_1-i_0}{2} + \binom{i_2-i_1}{2} + \binom{k+i_0-i_2}{2}}$.
\end{proof}

Combining \Cref{cor:triangle-bound} and \Cref{claim:k-faces-per-triangle}, we obtain the following bound on $F^{k,3}(U)$.

\begin{lemma} \label{lem:Fk3-bound}
    Let $\delta < \frac{1}{q^{k^2/4}}$. Let $U = U_0 \cup U_1 \cup \dots \cup U_{k-1} \subseteq X(0)$ such that $|U| \le \delta n$. Then, 
    \[
        |F^{k,3}(U)| \le O_k(1) \cdot q^{\binom{k}{2} - \frac{k^2}{8} - \frac{1}{2}} \cdot \max_{0 \le i_0 < i_1 < i_2 < k} \min_{(i, j) \in \left\{ \substack{
            (i_1 - i_0, i_2 - i_0), \\
            (i_2 - i_1, k + i_0 - i_1), \\
            (k + i_0 - i_2, k + i_1 - i_2) 
        } \right\} } q^{\frac{1}{8}((i-j+2)^2 + (k-i-j)^2)} \cdot |U| \mper
    \]
\end{lemma}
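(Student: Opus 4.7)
The plan is to double count pairs (triangle, face) where the triangle lies in $U$ and is contained in the face. Specifically, since every face $f \in F^{k,3}(U)$ contains at least one triangle whose three vertices all lie in $U$, we have the bound
\begin{equation*}
    |F^{k,3}(U)| \le \sum_{\substack{T \text{ triangle} \\ V(T) \subseteq U}} |\{f \in X(k-1) : T \subseteq f\}| \mper
\end{equation*}
Since $X$ is $k$-partite, every triangle has vertices in three distinct parts, so we can decompose the sum over triples $i_0 < i_1 < i_2 \in [k]$:
\begin{equation*}
    |F^{k,3}(U)| \le \sum_{0 \le i_0 < i_1 < i_2 < k} \triangle_{i_0, i_1, i_2} \cdot \max_{T} |\{f \in X(k-1) : T \subseteq f\}| \mper
\end{equation*}

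The plan is then to plug in \Cref{cor:triangle-bound} to bound $\triangle_{i_0, i_1, i_2}$ and \Cref{claim:k-faces-per-triangle} to bound the number of face extensions of each triangle, then bound the sum by $\binom{k}{3} = O_k(1)$ times the maximum summand. Concretely, fixing $i_0 < i_1 < i_2$ and writing $a = i_1 - i_0$, $b = i_2 - i_1$, $c = k + i_0 - i_2$ (so $a + b + c = k$), each triangle extends to $O_k(1) \cdot q^{\binom{a}{2} + \binom{b}{2} + \binom{c}{2}}$ faces by \Cref{claim:k-faces-per-triangle}, while \Cref{cor:triangle-bound} (using the $\min$ over three pairs $(i,j)$) gives a bound on $\triangle_{i_0, i_1, i_2}$ of the form $O_k(1) \cdot q^{(j-i)(k-(j-i)-1)/2 + i(k-i)/4 + j(k-j)/4} \cdot |U|$.

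The main step remaining is the algebraic simplification: for each of the three choices of $(i,j)$, I must verify the identity
\begin{equation*}
    \frac{(j-i)(k-(j-i)-1)}{2} + \frac{i(k-i)}{4} + \frac{j(k-j)}{4} + \binom{a}{2} + \binom{b}{2} + \binom{c}{2} = \binom{k}{2} - \frac{k^2}{8} - \frac{1}{2} + \frac{(i-j+2)^2 + (k-i-j)^2}{8} \mper
\end{equation*}
By the cyclic symmetry of the three pairs in $\min$ under the rotation $(i_0, i_1, i_2) \mapsto (i_1, i_2, i_0 + k)$, it suffices to check the identity for just one of the three pairs, say $(i,j) = (a, a+b)$. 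I expect this to be the main obstacle, though it is purely mechanical: writing $\binom{a}{2} + \binom{b}{2} + \binom{c}{2} = \frac{a^2 + b^2 + c^2 - k}{2}$ and using $a + b + c = k$ to reduce both sides to polynomials in $a, b, c$, one can verify that the difference vanishes (the key cancellation is that $a^2 + b^2 + c^2 = k^2 - 2(ab + bc + ca)$, which collapses the cross terms). After this identity is established, distributing the $\max$ over the triple $(i_0, i_1, i_2)$ and taking the $\min$ over the three pairs $(i,j)$ yields exactly the claimed bound.
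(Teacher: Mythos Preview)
Your proposal is correct and follows essentially the same approach as the paper: combine \Cref{cor:triangle-bound} with \Cref{claim:k-faces-per-triangle}, multiply the two bounds for each triple $(i_0,i_1,i_2)$, and absorb the sum over triples into the $O_k(1)$ by taking the maximum. The paper's own proof is in fact terser than yours---it simply writes down the product and says ``it is a straightforward calculation that this simplifies to the expression in the lemma statement''---so your outline of the algebraic identity and the cyclic-symmetry reduction to a single pair $(i,j)=(a,a+b)$ is, if anything, more detailed than what the paper provides.
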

\begin{proof}
    Combining \Cref{cor:triangle-bound} and \Cref{claim:k-faces-per-triangle}, we get an upper bound of
    \begin{align*}
        O_k(1) \cdot 
        \sqrt{q^{(j-i)(k-(j-i)-1)}} \cdot q^{i(k-i)/4} \cdot q^{j(k-j)/4} \cdot q^{\binom{i}{2} + \binom{j-i}{2} + \binom{k-j}{2}} \cdot |U| \mcom
    \end{align*}
    where $i < j$ are the indices obtained from taking the maximum over $i_0 < i_1 < i_2 < k$ in \Cref{claim:k-faces-per-triangle} and the minimum as in \Cref{cor:triangle-bound}.
    It is a straightforward calculation that this simplifies to the expression in the lemma statement.
\end{proof}

\subsection{The case of \texorpdfstring{$k = 5$}{k = 5}} 

Finally, we apply our bounds from this section to the specific case of $k = 5$, which we will use in our construction of two-sided unique neighbor expanders. 

\begin{corollary} \label{cor:F53-bound}
    Let $k = 5$, and let $\delta < q^{-25/4}$. Let $U = U_0 \cup U_1 \cup U_2 \cup U_3 \cup U_4 \subseteq X(0)$ such that $|U| \le \delta n$. Then, 
    \[
        |F^{5,3}(U)| \le O(q^{13/2}  \cdot |U| ) \mper
    \]
\end{corollary}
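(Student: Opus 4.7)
The plan is to obtain this corollary as a direct specialization of \Cref{lem:Fk3-bound} to $k=5$, with the main work being an explicit evaluation of the $\max$-$\min$ expression appearing in that lemma.

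First, I would simplify the prefactor: for $k=5$, we have $\binom{5}{2} - \tfrac{k^2}{8} - \tfrac{1}{2} = 10 - \tfrac{25}{8} - \tfrac{1}{2} = \tfrac{51}{8}$. The target bound $O(q^{13/2}) = O(q^{52/8})$ therefore requires showing that the max-min quantity
\[
M \;\coloneqq\; \max_{0 \le i_0 < i_1 < i_2 < 5}\ \min_{(i,j) \in \Sigma(i_0,i_1,i_2)} q^{\frac{1}{8}((i-j+2)^2 + (5-i-j)^2)}
\]
is at most $q^{1/8}$, where $\Sigma(i_0,i_1,i_2)$ denotes the three pairs listed in \Cref{lem:Fk3-bound}. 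Since the pair $(i,j) = (i_1 - i_0,\,i_2 - i_0)$ and its two cyclic shifts partition the ``gaps'' of the triple cyclically, $M$ depends only on the multiset of consecutive gaps $(g_1, g_2, g_3) = (i_1-i_0,\ i_2-i_1,\ 5+i_0-i_2)$ summing to $5$.

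Next I would enumerate the partitions of $5$ into three positive parts, namely $(1,1,3)$, $(1,2,2)$, and their cyclic rotations. For $(g_1,g_2,g_3) = (1,1,3)$, the three candidate pairs $(i,j)$ are $(1,2)$, $(1,4)$, and $(3,4)$, whose exponents $\tfrac18((i-j+2)^2 + (5-i-j)^2)$ evaluate to $\tfrac{5}{8}$, $\tfrac{1}{8}$, $\tfrac{5}{8}$ respectively, so the minimum is $q^{1/8}$. For $(1,2,2)$, the pairs are $(1,3)$, $(2,4)$, $(2,3)$, all yielding exponent $\tfrac{1}{8}$, again giving minimum $q^{1/8}$. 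Since every ordered triple of gaps is a cyclic rotation of one of these, we conclude $M \le q^{1/8}$.

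Combining the two pieces gives $|F^{5,3}(U)| \le O(1) \cdot q^{51/8} \cdot q^{1/8} \cdot |U| = O(q^{13/2} \cdot |U|)$, as desired, provided $\delta < q^{-k^2/4} = q^{-25/4}$, which is exactly the hypothesis on $\delta$. The only potential obstacle is the bookkeeping in the case analysis of the $\max$-$\min$; using the cyclic symmetry above reduces it to checking the two essentially distinct gap partitions, which is immediate.
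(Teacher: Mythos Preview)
Your proposal is correct and takes essentially the same approach as the paper: both specialize \Cref{lem:Fk3-bound} to $k=5$, invoke the cyclic (rotational) symmetry to reduce to the two gap-partitions $\{1,1,3\}$ and $\{1,2,2\}$ (the paper uses the representatives $(0,1,4)$ and $(0,2,3)$), and then verify by direct computation that each case gives $O(q^{13/2})|U|$. The only cosmetic difference is that you evaluate the already-simplified exponent $\tfrac18((i-j+2)^2+(5-i-j)^2)$ whereas the paper plugs into the factored form $\sqrt{q^{(j-i)(k-(j-i)-1)}}\cdot q^{i(k-i)/4}\cdot q^{j(k-j)/4}\cdot q^{\binom{i}{2}+\binom{j-i}{2}+\binom{k-j}{2}}$; both computations are equivalent and yield the same bound.
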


\begin{proof}
    By \Cref{lem:Fk3-bound}, we would like to evaluate the maximum over all tuples $0 \le i_0 < i_1 < i_2 < 5$ of
    \[
        \min_{(i, j) \in \left\{ \substack{
            (i_1 - i_0, i_2 - i_0), \\
            (i_2 - i_1, k + i_0 - i_1), \\
            (k + i_0 - i_2, k + i_1 - i_2) 
        } \right\} } O_k(1) \cdot q^{\binom{k}{2} - \frac{k^2}{8} - \frac{1}{2}} \cdot q^{\frac{1}{8}((i-j+2)^2 + (k-i-j)^2)} \cdot |U| \mper \numberthis \label{eqn:F53-bound}
    \]
    Let us perform casework on the value of $(i_0, i_1, i_2)$. Because the Ramanujan complex is rotationally symmetric, there are essentially two cases to consider: the case where $i_0, i_1, i_2$ are consecutive indices, or the case where they are not all consecutive. The first case is equivalent to the case of $0,1,4$, and the second case is equivalent to the case of $0,2,3$.

    In the first case, where $i_0 = 0$, $i_1 = 1$, and $i_2 = 4$, we have that Equation~\eqref{eqn:F53-bound} is at most
    \[
        O(1) \cdot q^{3/2} \cdot q^{4/4} \cdot q^{4/4} \cdot q^{3} \cdot |U| = O(1) \cdot q^{13/2} \cdot |U| \mper
    \]
    In the second case, where $i_0 = 0$, $i_1 = 2$, and $i_2 = 3$, we have that Equation~\eqref{eqn:F53-bound} is at most 
    \[
        O(1) \cdot q^{3/2} \cdot q^{3/2} \cdot q^{3/2} \cdot q^2 \cdot |U| = O(1) \cdot q^{13/2} \cdot |U| \mper \qedhere
    \]
\end{proof}

\section{Random gadget analysis}
\label{sec:random-gadget}
In this section, we prove \Cref{lem:pseudorandom-gadget-exists} which states that there exist bipartite graphs $H$ such that $H$ and $H^{\top}$ both satisfy the properties in \Cref{def:pr-gadget}.

\restatedefinition{def:pr-gadget}

In fact, we will prove that a random one satisfies the properties with high probability.
The desired statement then follows since $H$ and $H^{\top}$ have the same distribution.
Throughout this section, we will write random variables in \textbf{boldface}.

\begin{lemma}[Consequence of Lemma B.1, B.2 of \cite{HMMP24}]  \label{lem:erdos-renyi-gadget}
    Let $\bH$ be a random $(d_1,d_2)$-biregular graph on vertex sets $A \cup B$, and let $n_1 = |A|$, $n_2=|B|$, $n = n_1 + n_2$, and $p = d_2/n_1$ (observe that $d_2/n_1 = d_1/n_2$).
    Suppose $p \leq o_n(1)$ and $d_1, d_2 \geq \log^2 n$.
    Then with probability $1-o_n(1)$, $\bH$ satisfies the following properties:
    \begin{itemize}
        \item For every non-empty $S\subseteq A$, $\frac{|\UN_{\bH}(S)|}{|S|} \geq d_1 \parens*{1-p}^{|S|-1} - \sqrt{4 p \parens*{1-p}^{|S|-1} n_1 \log n_1} - o_n(d_1)$.
        \item For every non-empty $S\subseteq B$, $\frac{|\UN_{\bH}(S)|}{|S|} \geq d_2 \parens*{1-p}^{|S|-1} - \sqrt{4 p \parens*{1-p}^{|S|-1} n_2 \log n_2} - o_n(d_2)$.
    \end{itemize}
\end{lemma}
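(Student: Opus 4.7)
The plan is to obtain the claim as a more-or-less direct corollary of Lemmas B.1 and B.2 of \cite{HMMP24}, which already furnish (i) a lower bound on $\E[|\UN_{\bH}(S)|]$ for a fixed subset $S$ in a uniformly random biregular graph, and (ii) exponential concentration of $|\UN_{\bH}(S)|$ around that expectation. My task is then to unpack these two inputs, verify the parameter hypotheses of the present lemma are strong enough, and close a union bound over $S$.

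Fix $S \subseteq A$ of size $k$. The content of the expectation bound is that, via a first-moment calculation in the configuration model, $\E[|\UN_{\bH}(S)|] \geq k \cdot d_1 \cdot (1-p)^{k-1} - o_n(k d_1)$. The intuition is that for each of the $k d_1$ edges $(s,v)$ with $s \in S$, the endpoint $v$ is a unique neighbor through $s$ iff none of $v$'s other $d_2 - 1$ edges land in $S \setminus \{s\}$; in the configuration model this event has probability approximately $(1-p)^{k-1}$ by a binomial approximation, justified since $d_2 \cdot k/n_1 = pk$ is small when $p = o_n(1)$. The concentration bound then gives, for each fixed $S$,
\[
    |\UN_{\bH}(S)| \;\geq\; \E[|\UN_{\bH}(S)|] - k \cdot \sqrt{4 p(1-p)^{k-1} n_1 \log n_1}
\]
with failure probability at most, say, $n_1^{-\Omega(k)}$, provided $d_1, d_2 \geq \log^2 n$ (this is where the $\log^2 n$ degree hypothesis enters, to make the Gaussian-tail bound strong enough).

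To upgrade the pointwise bound into the ``for every non-empty $S \subseteq A$'' statement, I would union-bound grouped by $|S| = k$: there are $\binom{n_1}{k} \leq n_1^k$ subsets of size $k$, and the $\log n_1$ inside the deviation has been sized precisely to absorb a $k \log n_1$ loss from this union bound. Summing over $k \in \{1, \dots, n_1\}$ yields failure probability $o_n(1)$ and gives the first bullet. The second bullet follows by running the identical argument on the other side with $(A, d_1, n_1)$ and $(B, d_2, n_2)$ interchanged; the parameter $p = d_2/n_1 = d_1/n_2$ is symmetric across the two sides, so no re-derivation is needed.

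The only real technical obstacle is handled inside B.1 and B.2 of \cite{HMMP24}: the edges of a random biregular graph are not independent, so a naive Chernoff bound does not apply, and one instead works in the configuration model, revealing edges along a Doob martingale and applying Azuma's inequality (or, equivalently, an edge-switching argument \'a la McKay--Wormald). Taking those lemmas as a black box, what remains here is purely a bookkeeping exercise to check that the parameter requirements of B.1 and B.2 are implied by $p \leq o_n(1)$ and $d_1, d_2 \geq \log^2 n$, and that the combined expectation-plus-deviation estimate, after summing a $o_n(k d_1)$ slack into the ``$-o_n(d_1)$'' term in the statement, indeed produces the claimed bound.
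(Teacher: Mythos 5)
Your proposal is correct and matches the paper's treatment: the paper states this lemma purely as a consequence of Lemmas B.1 and B.2 of \cite{HMMP24} (an expectation bound plus concentration for $|\UN_{\bH}(S)|$ in a random biregular graph) and supplies no further argument, so the expectation-plus-deviation combination and the union bound over sets of each size, with the $\log n_1$ factor in the deviation term absorbing the $\binom{n_1}{k}$ choices, is exactly the intended derivation. The symmetry observation for the second bullet is likewise how the paper uses the statement.
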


Note that \Cref{lem:erdos-renyi-gadget} implies that a random biregular graph satisfies property (\ref{property:lossless}) in \Cref{def:pr-gadget}.
We now proceed to prove property (\ref{property:bucket-spread}).

We first state the following standard concentration bound, which was also used in \cite{HMMP24}.

\begin{lemma}[Concentration for sampling without replacement] \label{lem:without-replacement}
    Fix $1 \leq \ell\leq n$.
    Let $S\subseteq [n]$, let $\bT$ be a random sample of $\ell$ elements from $[n]$ without replacement, and let $\mu = \frac{\ell}{n}|S|$.
    Then, for all $\delta > 0$,
    \begin{align*}
        &\Pr\bracks*{ |S \cap \bT| \geq (1+\delta) \mu}
        \leq \exp\parens*{-\frac{\delta^2 \mu}{2+\delta}} \mper
    \end{align*}
\end{lemma}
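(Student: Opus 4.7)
The plan is to reduce the statement to a standard Chernoff-type bound for sums of independent Bernoulli random variables, using Hoeffding's classical comparison between sampling with and without replacement. Let $\bY = |S \cap \bT|$, and let $\bZ = \sum_{i=1}^{\ell} \bZ_i$ where the $\bZ_i$ are i.i.d.\ Bernoulli random variables with parameter $|S|/n$, so that $\bZ$ corresponds to drawing $\ell$ samples from $[n]$ \emph{with} replacement and counting hits in $S$. Note $\mathbb{E}[\bY] = \mathbb{E}[\bZ] = \mu$.

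The key ingredient is Hoeffding's inequality (1963): for any convex function $\phi : \mathbb{R} \to \mathbb{R}$, one has $\mathbb{E}[\phi(\bY)] \le \mathbb{E}[\phi(\bZ)]$. I would apply this with $\phi(x) = e^{tx}$ for $t > 0$ (which is convex), giving $\mathbb{E}[e^{t\bY}] \le \mathbb{E}[e^{t\bZ}]$. Thus the moment generating function of the hypergeometric variable is dominated by that of the binomial, so every standard exponential-moment tail bound for binomials transfers verbatim to $\bY$.

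Now I would invoke the standard multiplicative Chernoff bound for binomials: for $\bZ \sim \mathrm{Bin}(\ell, |S|/n)$ with mean $\mu$ and any $\delta > 0$,
\[
  \Pr[\bZ \ge (1+\delta)\mu] \;\le\; \exp\!\left(-\frac{\delta^2 \mu}{2+\delta}\right).
\]
Combining with the MGF comparison via Markov's inequality, $\Pr[\bY \ge (1+\delta)\mu] \le e^{-t(1+\delta)\mu}\mathbb{E}[e^{t\bY}] \le e^{-t(1+\delta)\mu}\mathbb{E}[e^{t\bZ}]$, and optimizing $t$ exactly as in the binomial Chernoff derivation yields the stated bound.

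There is no real obstacle: the lemma is entirely standard, and the only non-trivial input is Hoeffding's reduction, which can be cited. If one preferred to avoid citing Hoeffding, an alternative route is to write $\bY$ as a Doob martingale over the sequence of draws and apply Azuma with bounded increments, but the resulting constants are slightly worse; the Hoeffding reduction gives precisely the quoted form $\delta^2 \mu/(2+\delta)$.
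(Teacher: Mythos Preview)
Your argument is correct and is exactly the standard route to this inequality: Hoeffding's 1963 comparison shows the hypergeometric MGF is dominated by the binomial MGF, after which the multiplicative Chernoff bound $\exp(-\delta^2\mu/(2+\delta))$ transfers verbatim. The paper itself does not prove this lemma at all; it simply states it as a ``standard concentration bound'' and cites \cite{HMMP24}, so there is nothing to compare against beyond noting that your proposal supplies precisely the well-known justification the authors are implicitly invoking.
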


We now prove that a random biregular graph satisfies property (\ref{property:bucket-spread}) of \Cref{def:pr-gadget}.

\begin{lemma} \label{lem:ER-bucketing}
    Let $\bH$ be a random $(d_1,d_2)$-biregular graph on vertex sets $A \cup B$, and let $n_1 = |A|$, $n_2=|B|$ and $n = n_1 + n_2$.
    Let $B = B_1 \cup \cdots \cup B_r$ be a fixed partition of $B$ such that $\frac{n_2}{2r} \leq |B_i| \leq \frac{2n_2}{r}$ for each $i\in [r]$.
    Then, with probability $1 - O(1/n)$, we have that for all $S\subseteq A$ with $|S| \leq n_2/d_1$ and all $W\subseteq [r]$ with $|W| \geq \frac{r\log n}{d_1}$,
    \begin{align*}
        \sum_{i\in W} |N(S) \cap B_i| \leq 32|W| \cdot \max \braces*{ \frac{d_1}{r}|S|,\ \log n} \mper
    \end{align*}
\end{lemma}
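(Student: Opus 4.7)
The plan is to model a uniformly random $(d_1,d_2)$-biregular graph via the configuration model (pairing half-edges), as in the analysis of \cite{HMMP24}, and then apply \Cref{lem:without-replacement} combined with a union bound over pairs $(S,W)$. Throughout, let $B_W \coloneqq \bigcup_{i \in W} B_i$; since the $B_i$'s partition $B$, we have $\sum_{i\in W} |N(S) \cap B_i| = |N(S) \cap B_W| \leq e(S, B_W)$, so it suffices to bound the edge count $e(S, B_W)$.

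For any fixed pair $(S,W)$, the configuration model realizes $e(S,B_W)$ as the number of the $d_1|S|$ half-edges at $S$ that get matched to the $d_2|B_W|$ half-edges at $B_W$ (out of $d_2 n_2$ total half-edges at $B$). This is a hypergeometric random variable with mean
\[
    \mu \;=\; \frac{d_1 |S|\, |B_W|}{n_2} \;\leq\; \frac{2 d_1 |S| |W|}{r},
\]
using the hypothesis $|B_i| \leq 2n_2/r$. Setting $t \coloneqq 32|W|\cdot\max\{d_1|S|/r,\ \log n\}$, we have $t \geq 16\mu$, so \Cref{lem:without-replacement} (with $\delta = t/\mu - 1 \geq 15$) yields $\Pr[e(S, B_W) \geq t] \leq \exp(-t/4)$.

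The key step is to split the exponent and exploit both lower bounds on $t$ in tandem:
\[
    \exp(-t/4) \;\leq\; \exp\!\left(-4|W| d_1|S|/r\right) \cdot \exp(-4|W|\log n) \;=\; n^{-4|W|}\cdot\exp\!\left(-4|W| d_1|S|/r\right) \;\leq\; n^{-4|W| - 4|S|},
\]
where the last inequality invokes the hypothesis $|W| \geq r\log n/d_1$, which forces $|W| d_1/r \geq \log n$ and hence the second factor is at most $n^{-4|S|}$. Finally, a union bound over all $(S,W)$ with $|S|=s$ and $|W|=w$ gives at most $\binom{n_1}{s}\binom{r}{w} \leq n^{s+w}$ pairs, which is crushed by $n^{-4s-4w}$, leading to total failure probability $O(n^{-6}) = O(1/n)$. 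One also pays only a constant factor in failure probability when passing from the configuration model to the uniform random biregular graph, which is standard.

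The main subtlety is arranging for the tail bound to beat the union-bound cost for \emph{all} relevant sizes of $S$ and $W$ simultaneously. This is precisely why the threshold $t$ must include both the ``natural'' scale $|W|\, d_1|S|/r$ (which is $\Omega(\mu)$) and the ``floor'' $|W|\log n$: without the floor, the tail bound would be too weak for small $|S|$ to beat the $n^{|W|}$ factor in the union over $W$, while the lower bound on $|W|$ is exactly what allows converting the first factor $\exp(-\Theta(|W|d_1|S|/r))$ into the needed $n^{-\Theta(|S|)}$.
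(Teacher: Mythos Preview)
Your high-level strategy---reduce to a hypergeometric tail bound via \Cref{lem:without-replacement} and then union-bound over $(S,W)$---matches the paper's, and your bookkeeping is in fact cleaner: using $\max\{a,b\}\ge (a+b)/2$ to get $t/4 \ge 4|W|d_1|S|/r + 4|W|\log n$ lets you avoid the case-split on $|S|$ that the paper carries out. However, there is a genuine gap in the final sentence. Passing from the configuration model to the uniform simple $(d_1,d_2)$-biregular graph is \emph{not} a constant-factor loss here: the probability that the configuration model yields a simple graph is $\exp(-\Theta((d_1-1)(d_2-1)))$, so conditioning on simplicity blows up the failure probability by a super-polynomial factor once $d_1 d_2 \gg \log n$. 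In the intended application the gadget has $d_1,d_2 \ge \log^2 n$, so this step fails outright.

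The paper avoids the configuration model entirely by a direct symmetry argument on the uniform simple biregular graph. Since the distribution of $\bH$ is invariant under permutations of $B$, the quantity $|N_{\bH}(S)\cap B_W|$ is equidistributed with $|N_{\bH}(S)\cap \bT|$, where $\bT$ is a uniformly random $|B_W|$-subset of $B$ drawn \emph{independently} of $\bH$. Conditioning on $\bH$ and using $|N_{\bH}(S)|\le d_1|S|$ deterministically, this is stochastically dominated by $|U\cap\bT|$ for any fixed $U\subseteq B$ of size $d_1|S|$, which is an honest hypergeometric variable with mean at most $2d_1|S|\,|W|/r$. From this point your tail bound and union bound go through verbatim. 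So the fix is simply to replace your configuration-model reduction with this two-line symmetry argument (and to work with $|N(S)\cap B_W|$ directly rather than detouring through $e(S,B_W)$); the rest of your proof is correct and arguably tidier than the paper's own write-up.
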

\begin{proof}
    Consider a fixed $S \subseteq A$ of size $s \leq n_2/d_1$ and $W \subseteq [r]$ of size $w \geq \frac{r\log n}{d_1}$.
    Let $\ell \coloneqq \sum_{i\in W} |B_i|$.
    By symmetry, the distribution of $\bH$ is the same if we permute the vertices in $B$.
    Thus, the random variable $\sum_{i\in W}|N_{\bH}(S) \cap B_i|$ has the same distribution as $|N_{\bH}(S) \cap \bT|$ where $\bT \subseteq B$ is a uniformly random subset of size $\ell$, independent of $\bH$.

    Since $N_H(S) \leq d_1 s$ for any $H$ (with probability $1$), we can instead analyze the tail probabilities of $|U \cap \bT|$ for any fixed $U \subseteq B$ of size $d_1 s$.
    We have that $\Pr[|N_{\bH}(S) \cap \bT| \geq \lambda] \leq \Pr[|U \cap \bT| \geq \lambda]$.
    
    The random set $\bT$ can be viewed as $\ell$ samples from $B$ \emph{without} replacement.
    By \Cref{lem:without-replacement}, we get the same concentration bounds as if it is a sum of $\ell$ samples with replacement:
    letting $\mu \coloneqq \E[|U \cap \bT|] = d_1s\ell/n_2$, for any $\delta > 0$,
    \begin{align*}
        \Pr\bracks*{|U \cap \bT| \geq (1+\delta) \mu} \leq \exp\parens*{-\frac{\delta^2 \mu}{2+\delta}} \mper
    \end{align*}
    By assumption we have $w \cdot \frac{n_2}{2r} \leq \ell \leq w \cdot \frac{2n_2}{r}$.
    Thus, it follows that $d_1 s \frac{w}{2r} \leq \mu \leq d_1 s \frac{2w}{r}$.
    We now split into two cases depending on the size of $s$.
    Let the threshold be $\tau \coloneqq \frac{r\log n}{d_1}$, which corresponds to $\mu \approx w\log n$.
    Let $C = 8$.

    For $s \geq \tau$, we set $\delta = 2C-1$, and we have that $\Pr[|U \cap \bT| \geq 2C \mu] \leq \exp(-C\mu) \leq \exp(-Cd_1 s\frac{w}{2r})$.
    In this case, $2C\mu \leq 4Cd_1 s \frac{w}{r}$.

    For $s \leq \tau$, we set $\delta$ such that $(1+\delta) \mu = 2C w\log n$.
    In this case, we have $\frac{2C w\log n}{\mu} \geq \frac{Cr\log n}{d_1 s} \geq C \geq 5$, which means that $\delta \geq 4$ and hence $\frac{\delta^2}{2+\delta} \geq \frac{1}{2}(1+\delta)$.
    Thus, $\Pr[|U \cap \bT| \geq 2Cw\log n] \leq \exp(-Cw\log n)$.

    We next union bound over all $S\subseteq A$ of size $s$.
    \begin{align*}
        \sum_{s=1}^{\tau-1} \binom{n_1}{s} \cdot e^{-Cw\log n} + \sum_{s=\tau}^{n_2/d_1} \binom{n_1}{s} e^{-Cd_1s \frac{w}{2r}}
        \leq e^{\tau \log n_1 - Cw\log n} + \sum_{s=\tau}^{n_2/d_1} e^{s (\log n_1 - \frac{Cd_1w}{2r})} \mper
    \end{align*}
    Since we assume $w \geq \frac{r\log n}{d_1} = \tau$, for $C \geq 8$ we have $\frac{Cd_1w}{2r} \geq \frac{C}{2}\log n \geq 4\log n_1$.
    Thus, we can bound the above by $e^{-7w\log n} + e^{-\tau \cdot \frac{3d_1w}{r}} \leq e^{-2w\log n}$.

    Then, we union bound over $W \subseteq [r]$, which has at most $r^w \leq e^{w\log n}$ choices.
    Thus, with probability $1-O(1/n)$, we have
    \begin{equation*}
        \sum_{i\in W} |N_{\bH}(S) \cap B_i| \leq \max \braces*{ 4C d_1 s \frac{w}{r},\ 2Cw\log n}
        \leq 32 w \cdot \max \braces*{ \frac{d_1s}{r},\ \log n} \mper
        \qedhere
    \end{equation*}
\end{proof}

\section*{Acknowledgments}
S.M. would like to thank Omar Alrabiah, Louis Golowich, and Siqi Liu for insightful conversations.
J.H. would like to thank Mitali Bafna and Hung-Hsun Hans Yu for discussions on the Grassmann poset.

\bibliographystyle{alpha}
\bibliography{main}

\end{document}